\numberwithin{equation}{section}
\newcommand{\beq}{\begin{eqnarray}}
\newcommand{\beqq}{\begin{eqnarray*}}
\newcommand{\eeq}{\end{eqnarray}}
\newcommand{\eeqq}{\end{eqnarray*}}
\newcommand{\eps}{\varepsilon}
\newtheorem{theorem}{Theorem}[section]
\newtheorem{lemma}{Lemma}[section]
\newtheorem{proposition}[theorem]{Proposition}
\newtheorem{corollary}[theorem]{Corollary}
\definecolor{link-color}{rgb}{0.15,0.4,0.15}
\newcommand{\R}{\mathbb{R}}
    \def\d{{\textnormal d}}
\newenvironment{eqnarr}{\begin{IEEEeqnarray}{rCl}}{\end{IEEEeqnarray}\ignorespacesafterend}
\newcommand{\cenX}{\stackrel{_\leadsto}{X}}
\newcommand{\cenpi}{\stackrel{_\leadsto}{\pi}}
\renewcommand{\eqref}[1]{\hyperref[#1]{(\ref*{#1})}}
    \def\beq{\begin{eqnarr}}%arw
    \def\eeq{\end{eqnarr}}%arw
    \def\beqq{\begin{eqnarray*}} %arw
    \def\eeqq{\end{eqnarray*}} %arw
        \def\d{{\rm d}}
    \def\d{{\textnormal d}}
\newtheorem{remark}{Remark}[section]
\newcommand*{\pref}[1]{\hyperref[#1]{(\ref*{#1})}}
\newcommand*{\refpref}[2]{\hyperref[#2]{\ref*{#1}(\ref*{#2})}}
\numberwithin{equation}{section}
\theoremstyle{plain}
\begin{document}

\begin{frontmatter}
\title{Stable L\'evy processes in a cone}
%continuous-state branching processes}

\runtitle{Recurrent extension for ssMp in a Wedge}
%\thankstext{T1}{Footnote to the title with the ``thankstext'' command.}

\begin{aug}

\author{\fnms{Andreas E. Kyprianou}\thanksref{t3}\ead[label=e1]{a.kyprianou@bath.ac.uk}},
\author{\fnms{Victor Rivero}\thanksref{t1}\ead[label=e2]{rivero@cimat.mx}}
\and
\author{\fnms Weerapat Satitkanitkul\thanksref{t2}
\ead[label=e3]{weerapat.satit@gmail.com}}

\thankstext{t1}{Supported by EPSRC grant EP/M001784/1}

\thankstext{t3}{Supported by EPSRC grants EP/L002442/1 and EP/M001784/1}

\thankstext{t2}{Supported by a Royal Thai PhD scholarship}
%\thankstext{t3}{Second supporter of the project}
%\runauthor{F. Author et al.}

\affiliation{University of Bath, CIMAT, Universit\'e d'Angers}

\address{
A.E. Kyprianou and W. Satitkanitkul\\
University of Bath\\
Department of Mathematical Sciences \\
Bath, BA2 7AY\\
 UK.\\
\printead{e1}
%\\
%\phantom{E-mail:\ }\printead*{e2}
}

\address{V. Rivero\\
CIMAT A. C.,\\
Calle Jalisco s/n,\\
Col. Valenciana,\\
A. P. 402, C.P. 36000,\\
Guanajuato, Gto.,\\
Mexico.\\
\printead{e2}
}

\address{W. Satitkanitkul\\
 	LAREMA,\\
		Universit\'e d'Angers,\\
		2, Boulevard Lavoisier, \\
		49045 Angers cedex 01,\\
		France.\\
		 \printead{e3}
}

\end{aug}

\begin{abstract}\hspace{0.1cm}
Ba\~nuelos and Bogdan \cite{BB} and  Bogdan et al. \cite{BPW} analyse the asymptotic tail distribution of the first time a stable (L\'evy) process in dimension $d\geq 2$ exits a cone. 
We use these results to develop the notion of a stable process conditioned to remain in a cone as well as the the notion of a stable process conditioned to absorb continuously at the apex of a cone (without leaving the cone). As self-similar Markov processes we examine some of their fundamental properties through the lens of its Lamperti--Kiu decomposition. 
In particular we are interested to understand the underlying structure of the Markov additive process that drives such processes. As a consequence of our interrogation of the underlying MAP, we are able to provide an answer by example to the open question: If the modulator of a MAP has a stationary distribution, under what conditions does its ascending ladder MAP have a stationary distribution?

 With the  help of an analogue of the Riesz--Bogdan--\.Zak transform (cf. Bogdan and \.Zak \cite{BZ}, Kyprianou \cite{Deep1}, Alili et al. \cite{ACGZ}) as well as Hunt-Nagasawa duality theory, we show how the two forms of conditioning are dual to one another. Moreover, in the sense of Rivero \cite{R05, R07} and Fitzsimmons \cite{Fitz06}, we construct the null-recurrent extension of the stable process killed on exiting a cone, showing that it again remains in the class of self-similar Markov processes. Aside from the Riesz--Bogdan--\.Zak transform and Hunt-Nagasawa duality,  an unusual combination of the Markov additive renewal theory of e.g. Alsmeyer \cite{Alsmeyer1994} as well as the boundary Harnack principle (see e.g. \cite{BPW}) play a central role to the analysis.

In the spirit of several very recent works (see \cite{KRS, Deep1, Deep2, Deep3, ALEAKyp, DK}), the results presented here show that many previously unknown results of stable processes, which have long since been understood for Brownian motion, or are easily proved for Brownian motion, become accessible by appealing to the notion of the stable process as a self-similar Markov process, in addition to its special status as a  L\'evy processes with a semi-tractable potential analysis.
\end{abstract}

\begin{keyword}[class=MSC]
\kwd[Primary ]{60H20, 60J99, 60J80}
\kwd{}
\kwd[; secondary ]{60G52}
\end{keyword}

\begin{keyword}
\kwd{Stable processes, entrance law, Kelvin transform, duality, L\'evy processes}
\end{keyword}

\end{frontmatter}

\section{Introduction}
For $d\geq 2$, let $X:= (X_t:t\geq 0)$, with probabilities $\mathbb{P}=(\mathbb{P}_x, x\in\mathbb{R}^d)$,  be a $d$-dimensional isotropic stable process  of index $\alpha\in(0,2)$. That is to say, $(X, \mathbb{P})$ is a $\mathbb{R}^d$-valued L\'evy process having  characteristic triplet $(0,0,\Pi)$, where
\begin{equation}\label{jumpmeasure}
\Pi(B)=\frac{2^{\alpha}\Gamma(({d+\alpha})/{2})}{\pi^{d/2}|\Gamma(-{\alpha}/{2})|}\int_{B}\frac{1}{|y|^{\alpha+d}}{\rm d}y , \qquad B\in\mathcal{B}(\mathbb{R}).
\end{equation}
Equivalently, this means $(X, \mathbb{P})$ is a $d$-dimensional  L\'evy process with characteristic exponent $\Psi(\theta) = -\log\mathbb{E}_0({\rm e}^{{\rm i}<\theta, X_1>})$ which satisfies
\[
\Psi(\theta) = |\theta|^\alpha, \qquad \theta\in\mathbb{R}^d.
\]

Stable processes are also self-similar in the sense that they satisfy a scaling property. More precisely, for $c>0$ and $x\in\mathbb{R}^d\setminus\{0\}$,
\begin{equation}
\text{under }
 \mathbb{P}_x, \text{ the law of }(cX_{c^{-\alpha}t}, t\geq 0) \text{ is equal to } \mathbb{P}_{cx}.  
 \label{1/a}
 \end{equation}
As such, stable processes are useful prototypes for the study of  the class of L\'evy processes and, more recently, for the study of the class of self-similar Markov processes. The latter class of processes are regular strong Markov processes which respect the scaling relation \eqref{1/a}, and accordingly are identified as having self-similarity (Hurst) index $1/\alpha$.

\medskip

In this article, we are interested in understanding the notion of conditioning such stable processes to remain in a Lipschitz cone,
\begin{equation}
\Gamma =\{x\in \mathbb{R}^d :x\neq 0 , \arg(x)\in\Omega\},
\label{GammaOmega}
\end{equation}
where  $\Omega$ is open on $\mathbb{S}^{d-1}: = \{x\in\mathbb{R}^d: |x| =1\}$. Note that $\Gamma$ is an open set which does not include its apex $\{0\}$, moreover, $\Omega$ need not be a connected domain. See \cite{BPW} for the notion of Lipschitz cone and some related facts. 
\smallskip

Our motivation comes principally from the desire to show how the rapidly evolving theory of self-similar Markov processes presents a number of new opportunities to contextualise existing theory and methodology in a completely new way to attack problems, which may have otherwise been seen as beyond reach. We note in this respect that the key tool, the Lamperti--Kiu transform for self-similar Markov processes, was formalised only recently in \cite{CPR, KKPW, ACGZ}. It   identifies self-similar Markov processes as in one-to-one correspondence with Markov additive processes through a generalised polar decomposition with additional time change, and is the principal tool which,  in the last five years or so, has unlocked a number of ways forward for classical problems such as the one we consider here; see \cite{CPR, KRS, KPW, KV, Deep1, Deep2, Deep3, DDK, DK, DKW}. 
Moreover, this  new perspective opens up an entire new set of challenges both in the setting of the underlying class of Markov additive processes (which have seldom received attention in the general setting since the foundational work of e.g. \c{C}inlar \cite{Cinlar, Cinlar2, Cinlar1} and Kaspi \cite{Kaspi}) as well as the general class of self-similar Markov processes. Many of these challenges also emerge naturally in the setting of other stochastic processes and random structures where self-similarity plays an inherently fundamental role; see for example \cite{StR} in the setting of multi-type fragmentation processes, \cite{BWat} in the setting of growth fragmentation processes and \cite{BBCK, BCK} in the setting of planar maps. In this respect interrogating fundamental questions in the stable setting lays the foundations to springboard to problems of significantly greater generality. We mention in this respect, an outstanding problem in the setting of stable L\'evy processes, which relates to the extremely deep work of e.g. \cite{Weyl, Weyl2, Weyl4} which showed how to condition a Brownian motion to stay in a Weyl Chamber and the important relationship this has to random matrix theory. This also inspired similar conditionings of other processes, such as those that appear in queueing theory; see \cite{Weyl3}.

\smallskip

Our journey towards conditioning stable processes to remain in the cone $\Gamma$ will take us through a number of striking relations between stable processes killed on exiting  $\Gamma$ and stable processes conditioned either enter or to to absorb continuously at the apex of $\Gamma$, which are captured by space-time transformations. Our analysis will necessitate examining new families of Markov additive processes  that underly the conditioned stable processes through the Lamperti--Kiu transform. As an example we will exhibit a rare and difficult  result  which identifies semi-explicitly the existence of a stationary distribution for the at radially extreme points of the conditioned process (showing how the harmonic functions that drive our conditionings influence the strong mixing of the angular process). 
Moreover, our work will complement a number of other  works which have examined the notion entrance laws of self-similar Markov processes, as well as the subsequent notion of recurrent extension. Specifically, in the setting of Brownian motion and stable processes in a Lipschitz cone, this pertains to \cite{BDS, BPW}, and more generally for self-similar Markov processes,  see   \cite{BY02, CC06b, R05, R07, Fitz06, DDK}. Our approach will consist of an unusual mixture of techniques, coming from potential analysis and Harnack inequalities, Markov additive renewal theory,  last exit decompositions {\it \`a la} Maisonneuve and It\^o synthesis.

\section{Harmonic functions in a cone} For simplicity, we denote $\kappa_\Gamma$ be the exit time from the cone i.e. 
$$\kappa_\Gamma =\inf\{s>0: X_s \notin \Gamma\}.$$
Ba\~nuelos and Bogdan \cite{BB} and  Bogdan et al. \cite{BPW} analyse the tail behaviour of the stopping time $\kappa_\Gamma$. Let us spend a moment reviewing their findings. 

\medskip

Suppose that we write 
\[
U_\Gamma(x, \d y) = \int_0^\infty  \mathbb{P}_x(X_t \in \d y, \, t<\kappa_\Gamma)\d t, \qquad x, y\in\Gamma
\]
for the potential of the stable process killed on exiting $\Gamma$.

\medskip

Then it is known e.g. from Section 2 of \cite{BPW} that $U_\Gamma(x,\d y)$ has a density, denoted by $u_\Gamma(x,y)$ and that 
\[
M(y): = \lim_{|x|\to\infty}\frac{u_\Gamma(x, y)}{ u_\Gamma(x, y_0)}, \qquad y\in\Gamma,
\]
exists and depends on  $y_0\in\Gamma$ only through a normalising constant. Note that it is a consequence of this definition that $M(x) =0$ for all $x\not\in\Gamma$.
Moreover, 
 $M$ is locally bounded on $\mathbb{R}^d$ and homogeneous of degree $
 \beta = \beta(\Gamma, \alpha)\in(0,\alpha)$, meaning, 
\begin{equation}
M(x) = |x|^\beta M(x/|x|) = |x|^\beta M(\arg(x)) , \qquad x\neq 0.
\label{scaling}
\end{equation}
It is also known that, up to a multiplicative constant,  $M$ is the unique function which is harmonic in the sense that 
\begin{equation}
M(x) = 
\mathbb{E}_x[M(X_{\tau_B})\mathbf{1}_{( \tau_B<\kappa_\Gamma)}], \qquad x\in \mathbb{R}^d,
\label{martingale}
\end{equation}
where $B$ is any open bounded domain and $\tau_B = \inf\{t>0: X_t \not\in B\}$.

\medskip

The function $M$ plays a prominent role in the following asymptotic result in Corollary 3.2 of  Bogdan et al. \cite{BPW}, which strengthens e.g.  Lemma 4.2 of Ba\~nuelos and Bogdan \cite{BB}.

\begin{proposition}[Bogdan et al. \cite{BPW}]\label{BPWprop}
We have %Uniformly as  $\Gamma\ni t^{-1/\alpha}x\to0$, we have
\[
\lim_{a\to0}\sup_{x\in \Gamma , \, |t^{-1/\alpha}x|\leq a}\frac{\mathbb{P}_x(\kappa_\Gamma > t)}{M(x)t^{-\beta/\alpha}} =C ,
\]
where $C>0$ is a constant.
\end{proposition}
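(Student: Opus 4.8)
The plan is to strip the time variable off by self-similarity, rewrite $\mathbb{P}_x(\kappa_\Gamma>t)/(M(x)t^{-\beta/\alpha})$ as an expectation under the stable process conditioned to remain in $\Gamma$, and then identify the limit as the integral of $1/M$ against the entrance law of that conditioned process from the apex.

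First I would use the scaling property \eqref{1/a}, the dilation invariance of $\Gamma$, and the homogeneity \eqref{scaling} to reduce to the case $t=1$: with $z=t^{-1/\alpha}x$ one has $\mathbb{P}_x(\kappa_\Gamma>t)=\mathbb{P}_z(\kappa_\Gamma>1)$ and $M(x)t^{-\beta/\alpha}=M(z)$, so the proposition is equivalent to the statement that $\mathbb{P}_z(\kappa_\Gamma>1)/M(z)\to C$ as $z\to 0$ in $\Gamma$, uniformly in $\arg z$. Next, since by \eqref{martingale} --- applied to small balls and then upgraded from bounded domains to deterministic times using the homogeneity bound $M(x)\le c|x|^\beta$ with $\beta<\alpha$ together with the finiteness of the $\beta$-th moment of $\sup_{s\le t}|X_s|$ --- the process $M(X_{t\wedge\kappa_\Gamma})=M(X_t)\mathbf{1}_{(t<\kappa_\Gamma)}$ is a nonnegative $\mathbb{P}_z$-martingale of mean $M(z)$, I would perform the Doob $h$-transform with $h=M$, obtaining the law $\mathbb{P}^M_z$ of the stable process conditioned to remain in $\Gamma$, with $\mathrm{d}\mathbb{P}^M_z/\mathrm{d}\mathbb{P}_z|_{\mathcal{F}_t}=M(X_t)\mathbf{1}_{(t<\kappa_\Gamma)}/M(z)$. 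Testing this relation against $M(X_1)^{-1}$, which is finite and strictly positive on $\{1<\kappa_\Gamma\}$ because there $X_1\in\Gamma$ and $M>0$ on $\Gamma$, gives the key identity
\[
\frac{\mathbb{P}_z(\kappa_\Gamma>1)}{M(z)}=\mathbb{E}^M_z\!\bigl[M(X_1)^{-1}\bigr].
\]
It then remains to analyse the right-hand side as $z\to 0$. Because $M$ is homogeneous of degree $\beta$ the $h$-transform commutes with \eqref{1/a}, so $(X,\mathbb{P}^M)$ is itself self-similar of index $\alpha$ and never leaves $\Gamma$; moreover $M(X_t)^{-1}$ is a nonnegative $\mathbb{P}^M_z$-supermartingale whose expectation $\mathbb{P}_z(\kappa_\Gamma>t)/M(z)$ tends to $0$ as $t\to\infty$, whence $M(X_t)\to\infty$ and so $|X_t|\to\infty$ almost surely. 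Thus the Lamperti--Kiu Markov additive process driving $(X,\mathbb{P}^M)$ has additive component drifting to $+\infty$, and the entrance-law theory for self-similar Markov processes provides a nondegenerate weak limit of $(X,\mathbb{P}^M_z)$ as $z\to 0$; in particular $\mathbb{P}^M_z(X_1\in\cdot)$ converges weakly to a probability measure $\eta_1$ on $\Gamma$. The functional $M(X_1)^{-1}$ is unbounded --- it explodes as $X_1$ approaches $\partial\Gamma$ or the apex --- so weak convergence must be supplemented by a uniform integrability estimate, say $\sup_{z\in\Gamma,\,|z|\le 1}\mathbb{E}^M_z[M(X_1)^{-p}]<\infty$ for some $p>1$. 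Establishing this bound is the main obstacle: unwinding the $h$-transform turns it into uniform control of $U_\Gamma$ and of the killed transition density of $X$ near $\partial\Gamma$ and near the apex, which is exactly what the boundary Harnack principle for $X$ in $\Gamma$ (in its elliptic and parabolic forms) and the sharp two-sided estimates of \cite{BB, BPW} deliver --- morally, the conditioned process does not approach the boundary or the apex too quickly by time $1$. Granting it, uniform integrability together with the weak convergence gives $\mathbb{E}^M_z[M(X_1)^{-1}]\to\int_\Gamma M(y)^{-1}\,\eta_1(\mathrm{d}y)=:C$, uniformly in $\arg z$; finiteness of $C$ is the uniform integrability bound itself, and $C>0$ follows from Fatou's lemma since $M(X_1)^{-1}>0$. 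Undoing the scaling reduction then recovers the proposition, the supremum over $\{x\in\Gamma:|t^{-1/\alpha}x|\le a\}$ being precisely the uniformity obtained over $\{z\in\Gamma:|z|\le a\}$.
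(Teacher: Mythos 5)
The paper does not prove this statement; it imports it as Corollary~4 of Bogdan, Palmowski and Wang \cite{BPW}, where it is derived from sharp two-sided heat kernel estimates for the killed process, estimates for $M$ and for $\mathbb{P}_x(\kappa_\Gamma>t)$, and the boundary Harnack principle. Your proposal inverts that logical order, and the inversion is where the argument breaks down.

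The scaling reduction to $\mathbb{P}_z(\kappa_\Gamma>1)/M(z)$ is correct, and so is the identity $\mathbb{P}_z(\kappa_\Gamma>1)/M(z)=\mathbb{E}^M_z[M(X_1)^{-1}]$; building the $h$-transform directly from the harmonicity \eqref{martingale} rather than citing Theorem~\ref{conditioned} avoids the most obvious circularity, since Theorem~\ref{conditioned} is itself proved from the proposition at stake. The genuine gap is the step where you assert that ``entrance-law theory for self-similar Markov processes provides a nondegenerate weak limit of $(X,\mathbb{P}^M_z)$ as $z\to 0$.'' In dimension $d\ge2$ the Lamperti--Kiu driver is a MAP, not a one-dimensional L\'evy process, and the fact that its ordinate drifts to $+\infty$ is nowhere near enough to produce an entrance law: one additionally needs Harris recurrence and a stationary distribution for the modulating angular process, integrability of the ladder step, and then a Markov additive renewal theorem to manufacture the limit. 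That is precisely the machinery this paper builds in Sections~7 and~8, resting in turn on the boundary Harnack principle and on the entrance law $n_t$ from \cite{BPW} --- the very ingredients from which Corollary~4 of \cite{BPW} is derived. So the weak limit you invoke is not available as a black box; it is the heart of the matter, and by the time one has assembled the tools needed to prove it one has effectively reproduced the estimates of \cite{BB,BPW} by another route. The uniform integrability of $M(X_1)^{-1}$ under $\mathbb{P}^M_z$ is the second unaddressed obstacle: you correctly flag it as the main difficulty, but after unwinding the $h$-transform the required bound amounts to quantitative decay of $p^\Gamma_1(z,y)$ near $\partial\Gamma$ and near the apex, uniformly in small $z$, which is again exactly the content of the BPW estimates you would need to take as input. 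The overall strategy is morally sound --- in retrospect the constant does admit the representation $C=\mathbb{E}^\triangleleft_0[M(X_1)^{-1}]=C\int_\Gamma n_1(y)\,\mathrm{d}y$ via \eqref{P0} and \eqref{scalen2} --- but as written the proposal assumes, at the two decisive points, precisely what needs to be proved.
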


Before moving to the next section we also mention the earlier works of DeBlaissie  \cite{DeB} and  M\'endez-Hern\'andez \cite{MH}, who considered moment properties of the exit time from the cone.
%\begin{remark}\rm  IS THIS REALLY TRUE? 
%By scaling property of $X$ and homogeneity of $M$ the limit on the right hand side doesn't depend on $R$. So
%$$\lim_{s\to \infty}\sup_{x\in \Gamma}\frac{\mathbb{P}_{x}(\kappa_\Gamma>s)}{ M(x)s^{-\beta/\alpha}} =C_1.$$
%\end{remark}
%\medskip

\section{Results for stable processes conditioned to stay in a cone}
The above summary of the results in \cite{BB, BPW}  will allow us to introduce the notion of stable process conditioned to stay in $\Gamma$. Before doing that we make a slight digression to introduce some notation. 

Let $\mathbb{D}$ be the space of c\`adl\`ag paths defined on
$[0,\infty)$, with values in $\R^{d}\cup\Delta$, where $\Delta$ is a
cemetery state. Each path $\omega\in\mathbb{D}$ is such that
$\omega_t=\Delta$, for any $t\ge\inf\{s\geq
0:\omega_s=\Delta\}=:\zeta(\omega)$. As usual we extend any function
$f:\mathbb{R}^d\to\mathbb{R}$ to $\mathbb{R}^d\cup\Delta$ by taking $f(\Delta)=0.$  The space
$\mathbb{D}$ is endowed with the Skohorod topology and its Borel
$\sigma$-field. Here after we will denote by $X$ the canonical process of the coordinates, and by $\left(\mathcal{F}_t,\, t\geq 0 \right)$ the right-continuous filtration generated by $X$. We will also denote by $\mathbb{P}^{\Gamma}$ the law of the stable process $(X,\mathbb{P})$ killed when it leaves the cone $\Gamma.$ Note, in particular, that $(X, \mathbb{P}^{\Gamma})$ is also a self-similar Markov process. %Its Markov nature is inherited directly from $X$. Self-similarity also follows from that of $X$ as well as the fact that, if $\tilde{X}_t: =  cX_{c^{-\alpha }t}$, $t\geq 0$, for $c>0$, then 
%\begin{equation}
%\tilde{\kappa}_\Gamma : = \inf\{t>0 : \tilde{X}_t \not\in\Gamma\}
%=c^\alpha \inf\{c^{-\alpha}t>0 : \tilde{X}_{c^{-\alpha} t} \not\in c^{-1}\Gamma\}
%= c^\alpha \kappa_\Gamma.
%\label{stopscale}
%\end{equation}
%Hence, 
%\[
%cX^\Gamma_{c^{-\alpha}t} = cX_{c^{-\alpha}t}\mathbf{1}_{(c^{-\alpha}t<\kappa_\Gamma)} = \tilde{X}_{t}\mathbf{1}_{(t<\tilde{\kappa}^\Gamma)}, \qquad t\geq 0.
%\]

\begin{theorem}\label{conditioned}$ \mbox{ }$%Define, for $a>0$,
%\[
%\tau^\ominus_a= \inf\{t>0: |X_t|>a\}.
%\]
\begin{itemize}
\item[(i)]For any $t>0,$ and $x\in \Gamma$,
\[
\mathbb{P}^{\triangleleft}_x(A) :=\lim_{s\to \infty}\mathbb{P}_x\left( A\, | \kappa_\Gamma >t+s \right), \qquad A\in\mathcal{F}_t,
\]
defines a family of conservative probabilities on the space of c\`adl\`ag paths
which respect the Doob $h$-transform
\begin{equation}
\left.\frac{\d\mathbb{P}^\triangleleft_x}{\d\mathbb{P}_x}\right|_{\mathcal{F}_t%\cap\{t<\tau^{\ominus}_a\}
}: =  \mathbf{1}_{(t<\kappa_\Gamma
%\tau^{\ominus}_a
)}\frac{M(X_t)}{M(x)}, \qquad t\geq 0,\ \text{and}\ x\in \Gamma.
\label{MCOM}
\end{equation}
In particular, the right-hand side of \eqref{MCOM} is a martingale.
\item[(ii)] Let $\mathbb{P}^\triangleleft:=\left(\mathbb{P}^\triangleleft_x, x\in \Gamma\right).$ The process $(X, \mathbb{P}^\triangleleft)$, is a self-similar Markov process.
\end{itemize}
\end{theorem}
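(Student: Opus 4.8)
The plan is to exploit the two facts about $M$ recorded in Section 2: it is harmonic for the killed process in the sense of \eqref{martingale}, and it is homogeneous of degree $\beta$ as in \eqref{scaling}. For part (i), I would first use Proposition \ref{BPWprop} to evaluate the conditional probability. Fix $t>0$ and $A\in\mathcal{F}_t$. By the Markov property at time $t$,
\[
\mathbb{P}_x(A\mid \kappa_\Gamma>t+s)=\frac{\mathbb{E}_x\!\left[\mathbf{1}_A\,\mathbf{1}_{(t<\kappa_\Gamma)}\,\mathbb{P}_{X_t}(\kappa_\Gamma>s)\right]}{\mathbb{P}_x(\kappa_\Gamma>t+s)}.
\]
Now multiply and divide by $s^{-\beta/\alpha}$ inside the expectation and by $(t+s)^{-\beta/\alpha}$ in the denominator. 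By Proposition \ref{BPWprop}, $s^{\beta/\alpha}\mathbb{P}_y(\kappa_\Gamma>s)\to C M(y)$ as $s\to\infty$ for each fixed $y\in\Gamma$ (taking $a\to 0$ is automatic once $y$ is fixed and $s\to\infty$), and similarly $(t+s)^{\beta/\alpha}\mathbb{P}_x(\kappa_\Gamma>t+s)\to C M(x)$; also $(s/(t+s))^{\beta/\alpha}\to 1$. The main technical point is to justify passing the limit inside the expectation: I would use dominated convergence, with a domination bound on $s^{\beta/\alpha}\mathbb{P}_y(\kappa_\Gamma>s)$ uniform in $s$ large and in $y$ ranging over the (random but a.s. bounded, since $t<\kappa_\Gamma$) values of $X_t$ — such a uniform upper bound follows from Proposition \ref{BPWprop} together with the scaling property \eqref{1/a} applied to $\mathbb{P}_y(\kappa_\Gamma>s)=\mathbb{P}_{s^{-1/\alpha}y}(\kappa_\Gamma>1)$ and the local boundedness of $M$. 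This yields
\[
\mathbb{P}^{\triangleleft}_x(A)=\frac{1}{M(x)}\,\mathbb{E}_x\!\left[\mathbf{1}_A\,\mathbf{1}_{(t<\kappa_\Gamma)}M(X_t)\right],
\]
which is exactly \eqref{MCOM}. That the right-hand side of \eqref{MCOM} is a mean-one martingale (hence that the $\mathbb{P}^{\triangleleft}_x$ are consistent probabilities, conservative because the Radon--Nikodym density has no mass on $\{\zeta<\infty\}$) is immediate from \eqref{martingale}: apply it with a sequence of bounded domains $B_n\uparrow\Gamma$, or more directly note that $\mathbf{1}_{(t<\kappa_\Gamma)}M(X_t)$ is a nonnegative $\mathbb{P}_x$-martingale by the Markov property and harmonicity of $M$ for $\mathbb{P}^\Gamma$, with $\mathbb{E}_x[\mathbf{1}_{(t<\kappa_\Gamma)}M(X_t)]=M(x)$.

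For part (ii), the strong Markov property of $(X,\mathbb{P}^{\triangleleft})$ is a standard consequence of the $h$-transform structure: a Doob $h$-transform of a (strong) Markov process by a positive harmonic function is again a (strong) Markov process, and $(X,\mathbb{P}^\Gamma)$ is already known to be a self-similar, hence strong, Markov process. It remains to check the scaling relation \eqref{1/a} for $\mathbb{P}^{\triangleleft}$. Fix $c>0$; by \eqref{1/a} for $\mathbb{P}$ and the fact that the cone $\Gamma$ is invariant under the dilation $x\mapsto cx$ (so that $\kappa_\Gamma$ transforms correctly under the space-time rescaling), one has for $A\in\mathcal{F}_t$, writing $\theta_c$ for the map inducing the rescaled path,
\[
\mathbb{E}^{\triangleleft}_{cx}\!\left[\mathbf{1}_A\right]=\frac{1}{M(cx)}\mathbb{E}_{cx}\!\left[\mathbf{1}_A\,\mathbf{1}_{(t<\kappa_\Gamma)}M(X_t)\right]=\frac{1}{M(cx)}\mathbb{E}_{x}\!\left[\mathbf{1}_{A\circ\theta_c}\,\mathbf{1}_{(c^{-\alpha}t<\kappa_\Gamma)}M(cX_{c^{-\alpha}t})\right].
\]
Using homogeneity \eqref{scaling}, $M(cX_{c^{-\alpha}t})=c^{\beta}M(X_{c^{-\alpha}t})$ and $M(cx)=c^{\beta}M(x)$, so the factors $c^\beta$ cancel and the right-hand side equals $\mathbb{E}^{\triangleleft}_{x}[\mathbf{1}_{A\circ\theta_c}]$, which is the statement that under $\mathbb{P}^{\triangleleft}_x$ the law of $(cX_{c^{-\alpha}t},t\geq 0)$ is $\mathbb{P}^{\triangleleft}_{cx}$.

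I expect the only genuine obstacle to be the uniform-integrability/domination argument needed to interchange limit and expectation in part (i): one must control $s^{\beta/\alpha}\mathbb{P}_y(\kappa_\Gamma>s)$ uniformly over $y$ in compact subsets of $\overline\Gamma$ (including $y$ near the apex and near the boundary, where $M(y)$ is small), and Proposition \ref{BPWprop} gives precisely the uniformity near the apex after rescaling, while away from the apex a crude bound together with $\mathbb{P}_y(\kappa_\Gamma>s)\le \mathbb{P}_y(\kappa_{B(0,R)}>s)$ for a large ball suffices; everything else (the martingale property, the $h$-transform Markov property, and the scaling check) is routine given \eqref{martingale} and \eqref{scaling}.
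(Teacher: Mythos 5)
Your proof takes essentially the same route as the paper's, and parts (ii) and the martingale/conservativeness observations are correct and match the paper. The one genuine gap is in the dominated convergence step of part (i): you assert that $X_t$ is ``a.s.\ bounded, since $t<\kappa_\Gamma$'' and later speak of controlling $s^{\beta/\alpha}\mathbb{P}_y(\kappa_\Gamma>s)$ uniformly over $y$ in compact subsets of $\overline{\Gamma}$. Neither of these is available: $\Gamma$ is an unbounded cone, and on $\{t<\kappa_\Gamma\}$ the random variable $X_t$ has no a.s.\ bound, so a domination by a constant cannot work. What the scaling and Proposition \ref{BPWprop} (plus local boundedness of $M$) actually give is a bound of the form $s^{\beta/\alpha}\mathbb{P}_y(\kappa_\Gamma>s)\le C'|y|^\beta$ for all $y\in\Gamma$ and $s>0$, and a matching lower bound $\mathbb{P}_x(\kappa_\Gamma>t+s)\ge C^{-1}M(x)(t+s)^{-\beta/\alpha}$ for $s$ large, so the integrand is dominated (for $s$ large) by a constant times $|X_t|^\beta$. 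The missing ingredient, which the paper invokes explicitly, is that $\mathbb{E}_x[|X_t|^\beta]<\infty$ because an $\alpha$-stable process has finite absolute moments of every order in $(0,\alpha)$ and $\beta\in(0,\alpha)$. Replacing the ``boundedness/compactness'' claim by this moment bound closes the argument; everything else in your write-up (the pointwise limit via Proposition \ref{BPWprop}, the identification of the density, the $h$-transform Markov property, and the scaling computation using the homogeneity of $M$) is correct and agrees with the paper.
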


%Part (ii) of the above theorem refers to a class of processes that we have not yet defined.  Self-similar Markov processes are nothing more than regular strong Markov processes which satisfy a scaling property in their natural domain in the spirit of \eqref{1/a}. In the setting of Theorem \ref{conditioned}, this means that, for all $c>0$ and $x\in\Gamma\cup\{0\}$,
%\begin{equation*}
%\text{under }
% \mathbb{P}^\triangleleft_x, \text{ the law of }(cX_{c^{-\alpha}t}, t\geq 0) \text{ is equal to } \mathbb{P}^\triangleleft_{cx}.  
%% \label{1/atriangle}
% \end{equation*}

 Next, we want to extend the definition of the process $(X, \mathbb{P}^{\triangleleft})$, to include the apex of the cone $\Gamma$ as a point of issue in a similar spirit to the inclusion of the origin as a point of issue for positive and real-valued self-similar Markov processes (cf. Bertoin and Yor \cite{BY02}, Bertoin and Caballero \cite{BC02},   Caballero and Chaumont  \cite{CC06a}, Bertoin and Savov \cite{BS}, Chaumont et al. \cite{CKPR12}, Dereich et al. \cite{DDK}). Said another way, we want to show the consistent inclusion of the state $0$ to the state space $\Gamma$ in the definition of $(X, \mathbb{P}^\triangleleft)$ as both a self-similar and a Feller process.
%   Like the aforementioned works, the  proof of the below theorem will require a detailed analysis of the Lamperti--Kiu representation of $(X, \mathbb{P}_x)$, $x\in\Gamma$.
 
 \smallskip
 
Before stating our theorem in this respect, we must first provide a candidate law for $\mathbb{P}^\triangleleft_0$, which is consistent with the family $\mathbb{P}^\triangleleft$. To this end, we need to recall the following theorem which is a copy of Theorem 3.3 in  Bogdan et al. \cite{BPW} (see also \cite{BDS, DeB} for earlier work pertaining to distributional identities concerning the entrance law from the apex of Brownian motion in a cone). In order to state it, we need to introduce the notation $p^\Gamma_t(x, y)$, $x,y\in\Gamma$, $t\geq 0$, which will denote the transition density of $(X, \mathbb{P}^{\Gamma})$. Note the reason why this density exists is because of the existence of a transition density for $X$, say $p_t(x,y)$, $x,y\in\mathbb{R}^d$, $t\geq 0$ and the relation
\[
p^\Gamma_t(x,y):=p_t(x,y)-\mathbb{E}_x[\kappa_\Gamma<t ; p_{t-\kappa_\Gamma}(X_{\kappa_\Gamma},y)],\qquad  x, y\in\Gamma, t>0.
\]

\begin{theorem}[Bogdan et al. \cite{BPW}]\label{entrance}
The following limit exits,
\begin{equation}
n_t(y):= \lim_{\Gamma\ni x\rightarrow0}\frac{p^\Gamma_t(x,y)}{\mathbb{P}_x(\kappa_\Gamma>t)t^{\beta/\alpha}}, \qquad x, y\in\Gamma, t>0,
\label{P00}
\end{equation}
and  $(n_t(y)\d y, t>0)$, serves as an entrance law to $(X,\mathbb{P}^{\Gamma})$, in the sense that 
\[
n_{t+s}(y) = \int_\Gamma n_t(x)p^\Gamma_s(x,y)\d x, \qquad y\in \Gamma, s,t\geq 0.
\]
The latter is a finite strictly positive jointly continuous function  with the properties
\begin{equation}
n_t(y) = t^{-(d+\beta)/\alpha}n_1(t^{-1/\alpha}y) \,\,\text{ and }\,\,
 n_1(y) \approx 
 \frac{\mathbb{P}_y(\kappa_\Gamma >1)}{(1 + |y|)^{d+\alpha} }, \qquad y\in \Gamma, t>0,
 \label{scalen}
\end{equation}
and 
\begin{equation}
\int_\Gamma n_t(y)\d y = t^{-\beta/\alpha}, \qquad t>0,
 \label{scalen2}
\end{equation}
where $ f\approx g$ means the ratio of the functions $f$ and $g$ are bounded from above and below by two positive constants, uniformly in their domains. 
\end{theorem}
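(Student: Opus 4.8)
This statement reproduces Theorem~5 of Bogdan et al.~\cite{BPW}; we outline how the proof is organised, indicating where the earlier results of this excerpt enter. The crux is the existence of the limit in \eqref{P00}. Fix $t>0$ and $y\in\Gamma$ and note that the two nonnegative functions $(s,x)\mapsto p^\Gamma_{t-s}(x,y)$ and $(s,x)\mapsto \mathbb{P}_x(\kappa_\Gamma>t-s)=\int_\Gamma p^\Gamma_{t-s}(x,z)\,\mathrm dz$ are space--time harmonic for the process killed on leaving $\Gamma$ and vanish, in the parabolic-boundary sense, at the space point $0$. The scale-invariant parabolic boundary Harnack principle for the stable process in the Lipschitz cone, applied at the apex, shows their ratio is comparable with absolute constants on parabolic cylinders over the dyadic shells $\{2^{-k-1}\le|x|\le 2^{-k}\}$, and a standard oscillation-reduction iteration as $k\to\infty$ upgrades comparability to convergence with geometric rate, locally uniformly in $y\in\Gamma$ and $t\in(0,\infty)$; a route closer to \cite{BPW} instead extracts subsequential limits of the rescaled family $\{p^\Gamma_t(x,\cdot)/\mathbb{P}_x(\kappa_\Gamma>t)\}_{\Gamma\ni x\to0}$ from the available heat-kernel bounds and identifies the limit uniquely through the entrance relation below. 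Either way one gets $\widetilde n_t(y):=\lim_{\Gamma\ni x\to0}p^\Gamma_t(x,y)/\mathbb{P}_x(\kappa_\Gamma>t)\in(0,\infty)$, and Proposition~\ref{BPWprop}, whose convergence is uniform as $|x|\,t^{-1/\alpha}\to0$, allows $\mathbb{P}_x(\kappa_\Gamma>t)$ to be replaced by $CM(x)t^{-\beta/\alpha}$, so
\[
n_t(y)=t^{-\beta/\alpha}\widetilde n_t(y)=C^{-1}\lim_{\Gamma\ni x\to0}\frac{p^\Gamma_t(x,y)}{M(x)}
\]
exists; equivalently $n_t$ is, up to the factor $C$, the (space--time) Martin kernel of the killed process at the vertex of $\Gamma$. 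Finiteness and joint continuity of $(t,y)\mapsto n_t(y)$ then follow from the local uniformity of the convergence together with the known joint continuity of $p^\Gamma_t(x,y)$ and of $(t,x)\mapsto \mathbb{P}_x(\kappa_\Gamma>t)$.

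The structural identities follow by pushing known relations for $p^\Gamma$ through the limit $x\to0$. Self-similarity \eqref{1/a} gives $p^\Gamma_{\lambda^\alpha t}(\lambda x,\lambda y)=\lambda^{-d}p^\Gamma_t(x,y)$ and $\mathbb{P}_{\lambda x}(\kappa_\Gamma>\lambda^\alpha t)=\mathbb{P}_x(\kappa_\Gamma>t)$ for $\lambda>0$; substituting $x=\lambda x'$ and letting $x'\to0$ yields $n_{\lambda^\alpha t}(\lambda y)=\lambda^{-(d+\beta)}n_t(y)$, which rearranges to the first identity in \eqref{scalen}. For the entrance-law property, divide the Chapman--Kolmogorov identity $p^\Gamma_{t+s}(x,y)=\int_\Gamma p^\Gamma_t(x,z)p^\Gamma_s(z,y)\,\mathrm dz$ by $\mathbb{P}_x(\kappa_\Gamma>t+s)(t+s)^{\beta/\alpha}$, observe that the extra factor $\mathbb{P}_x(\kappa_\Gamma>t)t^{\beta/\alpha}/(\mathbb{P}_x(\kappa_\Gamma>t+s)(t+s)^{\beta/\alpha})$ tends to $1$ by Proposition~\ref{BPWprop}, and pass to the limit under the integral (dominated as in the next paragraph), obtaining $n_{t+s}(y)=\int_\Gamma n_t(z)p^\Gamma_s(z,y)\,\mathrm dz$. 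The same recipe applied to $\int_\Gamma p^\Gamma_t(x,y)\,\mathrm dy=\mathbb{P}_x(\kappa_\Gamma>t)$ gives $\int_\Gamma n_t(y)\,\mathrm dy=t^{-\beta/\alpha}$, which is \eqref{scalen2}. Strict positivity is then immediate: $n$ is a nonzero entrance law and $p^\Gamma_s(z,y)>0$ for all $z,y\in\Gamma$, $s>0$ --- a single jump joins any two points of $\Gamma$, even across distinct connected components of $\Omega$, without triggering $\kappa_\Gamma$ --- so $n_t(y)=\int_\Gamma n_{t/2}(z)p^\Gamma_{t/2}(z,y)\,\mathrm dz>0$.

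Finally, the two-sided bound for $n_1$ in \eqref{scalen} comes from the cone analogue of the Chen--Kim--Song heat-kernel factorisation (available for the killed process; see \cite{BPW} and the references therein),
\[
p^\Gamma_t(x,y)\asymp \mathbb{P}_x(\kappa_\Gamma>t)\,\mathbb{P}_y(\kappa_\Gamma>t)\,p_t(x,y),\qquad x,y\in\Gamma,\ t>0,
\]
with constants independent of $x,y,t$, where $p_t$ is the free-space transition density and $p_1(x,y)\asymp 1\wedge|x-y|^{-d-\alpha}$. Taking $t=1$, dividing by $\mathbb{P}_x(\kappa_\Gamma>1)$ and letting $x\to0$ gives $n_1(y)\asymp \mathbb{P}_y(\kappa_\Gamma>1)\,p_1(0,y)\asymp \mathbb{P}_y(\kappa_\Gamma>1)(1+|y|)^{-d-\alpha}$, uniformly in $y\in\Gamma$; the same factorisation gives $p^\Gamma_t(x,z)\lesssim \mathbb{P}_x(\kappa_\Gamma>t)p_t(x,z)$, hence the integrable-in-$z$ domination used in the limit interchanges above. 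The genuine obstacle is the analytic input of the first and third paragraphs --- the sharp control of the killed heat kernel near the apex (the parabolic boundary Harnack principle together with the oscillation-reduction, or equivalently the tightness-and-uniqueness argument of \cite{BPW}) and the factorisation estimate --- everything else being a soft passage to the limit. We record in passing that, via the $h$-transform \eqref{MCOM} and Proposition~\ref{BPWprop}, the existence of $n_t$ is equivalent to the existence of an entrance density from $0$ for the conditioned self-similar Markov process $(X,\mathbb{P}^\triangleleft)$ of Theorem~\ref{conditioned}, which could alternatively be obtained from its Lamperti--Kiu representation once the underlying Markov additive process is identified.
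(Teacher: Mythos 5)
The paper does not prove this statement: it is presented verbatim as a quotation of Theorem~5 of Bogdan et al.\ \cite{BPW}, so there is no internal proof to compare against. Your reconstruction is a faithful account of how \cite{BPW} obtain it --- the hard analytic core (existence of the limit, via the boundary Harnack principle and the factorisation $p^\Gamma_t(x,y)\approx \mathbb{P}_x(\kappa_\Gamma>t)\,\mathbb{P}_y(\kappa_\Gamma>t)\,p_t(x,y)$, which this paper itself later quotes as equations (10) and (53) of \cite{BPW}) is correctly identified and deferred, and the soft consequences you derive from it (the scaling identity $n_{\lambda^\alpha t}(\lambda y)=\lambda^{-(d+\beta)}n_t(y)$, the entrance-law and total-mass identities via Chapman--Kolmogorov and Proposition~\ref{BPWprop}, strict positivity, and the two-sided bound on $n_1$) are all carried out correctly. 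Given that the theorem is explicitly cited rather than proved, this level of detail is appropriate.
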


The existence of the entrance law $(n_t(y)\d y, t>0),$ is sufficient to build  a candidate for a probability measure, say $\mathbb{P}^\triangleleft_0,$ on $\mathbb{D}$ carried by the paths of $X$ that start continuously from $0$ and remain in $\Gamma$ forever. % say $(\mathbb{D}(\Gamma), \mathcal{F}(\Gamma))$, where  $\mathcal{F}(\Gamma)$ is the Borel sigma algebra generated by open sets on $\mathbb{D}(\Gamma)$ with respect to the Skorokhod topology. 
 To that end, note  from \eqref{P00} and Proposition \ref{BPWprop}, that for any $t>0$ we have the following weak convergence,
\begin{align}
\lim_{\Gamma\ni x\to0}\frac{M(y)}{M(x)}\mathbb{P}_x(X_t \in \d y , \, t<\kappa_\Gamma) 
&= \lim_{\Gamma\ni x\to0}C\frac{M(y)}{\mathbb{P}_x(t<\kappa_\Gamma)t^{\beta/\alpha} }p^\Gamma_t(x,y)\d y\notag\\
&=CM(y)n_t(y)\d y,
\qquad y\in \Gamma,
\label{P0}
\end{align}
% &
where $C$ is the constant in Proposition \ref{BPWprop}, and it is independent of $t$.   With a pre-emptive choice of notation, let us denote by $\mathbb{P}^\triangleleft_0(X_t \in \d y)$ the measure that is obtained as a limit in the above relation, that is $$\mathbb{P}^\triangleleft_0(X_t \in \d y):=CM(y)n_t(y)\d y,\qquad y\in \Gamma, t>0.$$ Recalling from \eqref{MCOM} that  $M$ forms a  martingale for the process killed at its first exit from $\Gamma,$ we have that necessarily $\int_{\Gamma}\mathbb{P}^\triangleleft_0(X_t \in \d y)=1.$ Furthermore, denote by $\mathbb{P}^\triangleleft_0$ the probability measure on $\mathbb{D}$ whose finite dimensional distributions are given by, 
\[
\mathbb{P}_0^\triangleleft(X_{t_i}\in A_i, \, i = 1,\cdots, n) := 
C\int_{A_1}M(y)n_{t_1}(y) \mathbb{P}_y^\triangleleft(X_{t_i-t_1}\in A_i, \, i = 2,\cdots, n)\,\d y,
\] 
for $n\in\mathbb{N}$, $0<t_1\leq \cdots t_n<\infty$ and Borel subsets of $\Gamma,$ $A_1, \cdots, A_n$.
The weak convergence in (\ref{P0}) extends in a straightforward way to the finite dimensional convergence $\mathbb{P}^{\triangleleft}_{x}\xrightarrow[\Gamma\ni x\to0]{\text{f.d.}}\mathbb{P}_0^\triangleleft.$ Our main result in this respect establishes that the convergence holds in the stronger sense of Skorohod's topology. 
%(Recall the law of a Markov process is uniquely characterised by its transitions.) 

 %As $M$ is only defined up to a multiplicative constant, without loss of generality, we can take $C =1$.
%Note, however, that due to Theorem \ref{entrance}, 
%\[
%\int_\Gamma\mathbb{P}^\triangleleft_0(X_t\in\d y)=C \int_\Gamma M(y)n_t(y)\d y
%\]

%\smallskip

%We thus define a pre-measure on  cylinder sets
%that is consistent with $\mathbb{P}^\triangleleft$, such that, 

\begin{theorem}\label{0}
The limit $\mathbb{P}_0^\triangleleft : =\lim_{\Gamma\ni x\to0}\mathbb{P}^\triangleleft_x$ is well defined on the Skorokhod space, so that, $(X, (\mathbb{P}^\triangleleft_x, x\in\Gamma\cup\{0\}))$ is both Feller and self-similar which enters continuously at the origin, after which it never returns and subsequently remains in $\Gamma$.
\end{theorem}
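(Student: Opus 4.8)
The plan is to upgrade the finite-dimensional convergence $\mathbb{P}^{\triangleleft}_{x}\xrightarrow[\Gamma\ni x\to0]{\text{f.d.}}\mathbb{P}_0^\triangleleft$ to weak convergence on the Skorokhod space by establishing tightness of the family $(\mathbb{P}^\triangleleft_x, x\in\Gamma)$ as $x\to 0$, and then verify the Feller property using self-similarity. First I would exploit the scaling property: under $\mathbb{P}^\triangleleft$, the $h$-transform by the homogeneous function $M$ commutes with the scaling \eqref{1/a}, so $(X, \mathbb{P}^\triangleleft)$ is self-similar of index $1/\alpha$, and it suffices to prove tightness for initial points $x$ along a sequence $x_n\to 0$ with, say, $|x_n| = 2^{-n}$; the general case follows by interpolating with the scaling map $x\mapsto cx$, $t\mapsto c^{-\alpha}t$. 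The key probabilistic input is the Lamperti--Kiu representation: writing $X_t = |X_t|\,\Theta_t$ with $\Theta_t = \arg(X_t)$, there is a Markov additive process $(\xi, \Theta)$ (the MAP driving $(X,\mathbb{P}^\triangleleft)$, whose existence and basic structure are part of the programme of Theorem \ref{conditioned}(ii)) and a time change such that $|X_t| = \exp(\xi_{\varphi(t)})$. Continuity of entrance at $0$ is then equivalent to the statement that the driving Lévy-type ordinate $\xi$ issued from $-\infty$ drifts to $+\infty$ at a controlled rate; concretely, I would show $\xi_s/s \to \mathtt{m} > 0$ where $\mathtt{m}$ is the stationary mean of the MAP velocity under the $M$-transform — this is exactly the regime in which the Lamperti--Kiu clock run from $-\infty$ reaches every finite level in finite time and the self-similar process leaves $0$ continuously and never returns.

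The heart of the argument is a modulus-of-continuity / no-explosion estimate uniform in the starting point. I would use the Aldous tightness criterion: for stopping times $S_n \le T_n \le S_n + \delta$ bounded by a fixed $T$, control $\mathbb{P}^\triangleleft_{x}(|X_{T_n} - X_{S_n}| > \eps)$ uniformly in $x$ near $0$. By the strong Markov property this reduces to bounding, uniformly over starting points $z$ in a neighbourhood of $0$ in $\Gamma$,
\[
\mathbb{P}^\triangleleft_{z}\bigl(\sup_{s\le \delta}|X_s - z| > \eps\bigr)
= \frac{1}{M(z)}\,\mathbb{E}_z\Bigl[\mathbf{1}_{(\delta<\kappa_\Gamma)}M(X_\delta)\mathbf{1}_{(\sup_{s\le\delta}|X_s-z|>\eps)}\Bigr].
\]
Here the homogeneity $M(x)=|x|^\beta M(\arg x)$ from \eqref{scaling}, together with the boundary Harnack principle for $M$ near $\partial\Gamma$ (from \cite{BPW}), lets one replace $M(X_\delta)/M(z)$ by a bounded factor times $(|X_\delta|/|z|)^\beta$, and then the scaling \eqref{1/a} of the underlying stable process rescales $z$ to a point on the unit sphere, converting the small-$\delta$ estimate at a point near $0$ into a small-$(\delta/|z|^\alpha)$ estimate at a fixed point — but crucially, after the $h$-transform, the relevant time scale is the self-similar one and the estimate must be made uniform; this is where I would invoke the boundary behaviour of $n_1$ in \eqref{scalen} together with Proposition \ref{BPWprop} to get the bound $\mathbb{P}^\triangleleft_z(\sup_{s\le\delta}|X_s - z|>\eps) \le C(\eps)\,\omega(\delta)$ with $\omega(\delta)\to 0$, uniformly for $z$ in a punctured neighbourhood of $0$. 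Combined with the already-established f.d.d. convergence and a standard argument ruling out mass escaping to infinity (again via \eqref{scalen}--\eqref{scalen2} and the martingale property $\int_\Gamma \mathbb{P}^\triangleleft_0(X_t\in\d y) = 1$), this gives weak convergence on $\mathbb{D}$, hence $\mathbb{P}^\triangleleft_0$ is well defined as a law on the Skorokhod space and $\Gamma\ni x\to 0$ is continuous into it.

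Finally, to obtain the Feller property on $\Gamma\cup\{0\}$: right-continuity of $t\mapsto \mathbb{E}^\triangleleft_x[f(X_t)]$ and the Markov property are inherited from $(X,\mathbb{P}^\triangleleft)$ on $\Gamma$ and extended to the new point $0$ by the entrance-law construction; continuity of $x\mapsto \mathbb{E}^\triangleleft_x[f(X_t)]$ at $0$ for $f\in C_0$ is precisely the weak convergence just proved, and at interior points it follows from the Feller property of the stable process and local boundedness and continuity of $M$ on $\mathbb{R}^d\setminus\{0\}$. That the process never returns to $0$ after leaving it follows because, under $\mathbb{P}^\triangleleft_z$ for $z\ne 0$, $\liminf_{t\to\infty}|X_t| = \infty$ (the $M$-transform pushes the radial part to $+\infty$, consistent with $\beta\in(0,\alpha)$ and the transience of the conditioned process), and by scaling the same holds from $0^+$; hence $0$ is visited only at time $0$. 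The main obstacle I anticipate is the uniform-in-$z$ modulus-of-continuity estimate near the apex: one must simultaneously handle the degeneracy of $M$ on $\partial\Gamma$ (requiring the boundary Harnack principle) and the time-rescaling built into the $h$-transform, so that the small-time continuity estimate survives the passage $z\to 0$ rather than blowing up; getting the two scalings — the self-similar clock and the stable-process clock — to cooperate uniformly is the delicate point.
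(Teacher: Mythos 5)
Your approach and the paper's differ at the structural level.  You propose the classical route: establish tightness via the Aldous criterion, combine with the known finite-dimensional convergence, and then check the Feller property.  The paper instead invokes a specialised convergence criterion (Proposition \ref{prop}, lifted from Dereich et al.\ \cite{DDK} and designed precisely for self-similar Markov processes entering from a boundary point), which reduces the Skorokhod convergence to four conditions: (a) $\lim_{\eps\to 0}\limsup_{\Gamma\ni z\to 0}\mathbb{E}^\triangleleft_z[\tau^\ominus_\eps]=0$; (b) convergence of the exit law $\mathbb{P}^\triangleleft_z(X_{\tau^\ominus_\eps}\in\cdot)$ as $z\to 0$; (c) $\mathbb{P}^\triangleleft_0$-a.s.\ $X_0=0$ and $X_t\neq 0$ for $t>0$; and (d) the regenerative identity for the post-$\tau^\ominus_\eps$ process.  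Each of these is then discharged by a prior result: (a) follows from the Green function computation $G^\triangleleft(0,\d y)=M(y)H(y)\,\d y$ of Lemma \ref{excursionpotential}; (b) is exactly the content of the Markov additive renewal argument in Theorem \ref{oo}; (c) comes from the Riesz--Bogdan--\.Zak duality of Theorem \ref{BZcone}; and (d) again uses Theorem \ref{oo}.  This design deliberately sidesteps the uniform small-time oscillation estimate that your argument requires.

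The concrete gap in your proposal is the uniform modulus-of-continuity bound
$\mathbb{P}^\triangleleft_z(\sup_{s\le\delta}|X_s-z|>\eps)\le C(\eps)\,\omega(\delta)$
with $\omega(\delta)\to 0$ uniformly for $z$ in a punctured neighbourhood of $0$.  You correctly flag this as ``the delicate point,'' but the sketch you give does not close it: the boundary Harnack principle and the asymptotics of $n_1$ control the ratio $M(X_\delta)/M(z)$ and the one-point marginal, not the running supremum $\sup_{s\le\delta}|X_s-z|$, and rescaling by $|z|^\alpha$ in time trades a small-$\delta$ estimate near the apex for a large-time estimate away from it, which is not obviously what is wanted.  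It is precisely to avoid this kind of two-scale calculation that the paper replaces tightness by the exit-time/exit-law conditions, where scaling cooperates cleanly.  A second, independent gap is the assertion that the process ``never returns'' to $0$: you state it and attribute it to transience of the $M$-transformed radial part, but you do not prove it.  In the paper this is exactly condition (c), and it is proved by the duality $(KX_{\eta(\cdot)},\mathbb{P}^\triangleleft_x)\overset{d}{=}(X,\mathbb{P}^\triangleright_x)$ from Theorem \ref{BZcone}: the conditioned-to-absorb process $(X,\mathbb{P}^\triangleright)$ reaches $0$ only at the end of its life, and duality transfers this to the statement that $(X,\mathbb{P}^\triangleleft)$ leaves $0$ immediately and never returns.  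Without some version of this duality (or a direct MAP drift argument under $\mathbf{P}^\triangleleft$, which would itself require more than a heuristic), the ``never returns'' claim remains unsupported.  So while your overall architecture is a legitimate alternative in principle, as written it leaves the two hardest steps unproved, and those are exactly the ones the paper's choice of criterion is engineered to handle through Theorems \ref{oo} and \ref{BZcone}.
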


The proof of Theorem  \ref{0} leads us to a stronger understanding how stable processes enter into the wedge $\Gamma$ from its apex, specifically showing that there is functional continuity on the Skorokhod space, complementing the existing works of \cite{BPW} in the stable setting and \cite{BDS, DeB} in the Brownian setting, which deal with specific functionals of the respective processes killed on exiting a cone.  
 \smallskip

Recalling Williams' classical decomposition of Brownian excursions from the origin, which shows that their entrance law can be constructed from the Brownian motion conditioned to stay positive (also identifiable as a Bessel-3 process), one may wonder, in light of Theorem \ref{0}, if it there is a self-similar process that behaves like $(X, \mathbb{P}^{\Gamma})$, but once it exits $\Gamma$ it is not absorbed but returns $0$ in a sensible way. In the specialised literature, a process bearing those characteristics would be called a {\it self-similar recurrent extension} of the stable process killed on exiting the cone $\Gamma,$ $(X, \mathbb{P}^{\Gamma}),$ i.e. a $\Gamma\cup\{0\}$--valued process that behaves like $(X,\mathbb{P}^\Gamma)$ up to the first hitting time of $\partial \Gamma$, for which $0$ is a recurrent and regular state, and that has the scaling property (\ref{1/a}). If such a process exists, say $({\cenX}_t,t\geq 0),$ the fact that $0$ is regular for it, implies that there exists a local time at $0$, say $L$, and an excursion measure from $0,$ say $\mathbf{N}^{\Gamma}.$ We will see in Section \ref{proofrecex} that either $\mathbf{N}^{\Gamma}(X_{0+}\neq 0)=0$ or $\mathbf{N}^{\Gamma}(X_{0+}=0)=0$. In the former case, we say that the recurrent extension leaves $0$ continuously, and in the latter that it leaves $0$ by a jump. General results from excursion theory (see e.g. \cite{fitzsimmons-getoor, Blum1983}),  ensure that both objects together $(L, \mathbf{N}^{\Gamma})$ characterize ${\cenX}.$ Furthermore, the measure $\mathbf{N}^{\Gamma}$ is a
 {\it self-similar excursion measure compatible} with the transition semi-group of $(X, \mathbb{P}^{\Gamma}),$ that is, it is a measure on $(\mathbb{D}, \mathcal{F})$ such that 
\begin{description}
\item[(i)] it is carried by the set of $\Gamma$-valued paths that are sent to the cemetery state $0$ at their death with lifetime $\zeta,$ 
\[ 
\{\chi\in \mathbb{D}\, |\,  \zeta>0, \chi_s\in \Gamma, s<\zeta, \text{ and } \chi_t=0, \text{ for all } t\geq \zeta\},
\]
i.e. where, tautologically, $\zeta = \inf\{t>0: \chi_t=0\}$;
\item[(ii)] the Markov property under $\mathbf{N}^{\Gamma},$ is satisfied, that is, for every bounded $\mathcal{F}$-measurable variable $Y$ and $A\in \mathcal{F}_t$, $t>0$,
$$ \mathbf{N}^{\Gamma}(Y\circ \theta_t, A\cap \{t<\zeta\})=\mathbf{N}^{\Gamma}\left(\mathbb{E}^{\Gamma}_{X_t}[Y], A \cap \{t<\zeta\}\right);$$
\item[(iii)] the quantity $\mathbf{N}^{\Gamma}(1-{\rm e}^{-\zeta})$ is finite;
\item[(iv)] there exists a $\gamma\in (0,1)$ such that for any $q,c>0$,
\begin{equation}
 \mathbf{N}^{\Gamma}\left(\int_0^{\zeta} {\rm e}^{-qs} f(X_s) {\rm d}s\right)= c^{(1-\gamma)\alpha}\mathbf{N}^{\Gamma}\left(\int_0^{\zeta} {\rm e}^{-qc^{\alpha}s} f(cX_s){\rm d} s\right).
 \label{potscale}
\end{equation}
\end{description}
The condition (iv) above is equivalent to require that
\begin{description}
\item[(iv')]  
 there exists $\gamma \in(0,1)$ such that, for any $c>0$ and $f:\Gamma\to \mathbb{R}^+$ measurable, 
$$  \mathbf{N}^{\Gamma}(f(X_s),s<\zeta) =c^{\alpha \gamma}\mathbf{N}^{\Gamma}(f(c^{-1} X_{c^{\alpha }s}), c^{\alpha} s<\zeta),\, \text{ for }s>0$$
\end{description}
The fact that  $\mathbf{N}^{\Gamma}$ necessarily satisfies the above conditions is a consequence of  a straightforward extension of the arguments in Section 2.2 in \cite{R05}. 

\smallskip

Conversely, It\^o's synthesis theorem ensures that, given an excursion measure satisfying the conditions (i)-(iv) above, and a local time at zero, there is a self-similar recurrent extension of $(X, \mathbb{P}^{\Gamma}).$ Using this fact, and that the entrance law $(n_t(y)\d y, t>0)$ is intimately related to an excursion measure,  we establish in the next result the existence of  unique self-similar recurrent extension of $(X, \mathbb{P}^{\Gamma})$ that leaves $0$ (the apex of the cone) continuously.  Furthermore, we will give a complete description  of recurrent extensions that leave zero by a jump.   

%In particular, the law $\mathbb{P}_0^\triangleleft$, in fact the entrance law $(n_t(y)dy, t>0),$ is intimately related to an excursion measure, say $\mathbf{N}^{\Gamma}$. In the next result, we use that very same excursion measure to construct a self-similar recurrent extension of the stable process killed on exiting the cone $\Gamma,$ $(X, \mathbb{P}^{\Gamma}),$ that leaves $0$ either continuously or by a jump. To be more precise, by a self-similar recurrent extension we mean a $\Gamma\cup\{0\}$--valued process that behaves like $(X,\mathbb{P}^\Gamma)$ up to the first hitting time of $0$, for which $0$ is a recurrent and regular state, and that has the scaling property (\ref{1/a}).  We will see that a recurrent extension that leaves $0$ continuously exists only whenever $\beta,$ as defined in (\ref{scaling}), is such that $\beta<\alpha-\frac{d}{2}.$ Because, we always have $\alpha<2$ and $\beta\geq 0,$ one sees immediately that for dimensions 4 and above there are no recurrent extensions that leave zero continuously. While for $d=2$ or $d=3$ the inequality $\beta<\alpha-\frac{d}{2},$ can only be satisfied for some cones such that the associated $\beta<\alpha/2$, that is those that are bigger than the half space, whose associated index is exactly $\alpha/2.$    

\begin{theorem}\label{recex}
Let $\mathbf{N}^{\Gamma}$ be a self-similar excursion measure compatible with $(X,\mathbb{P}^{\Gamma}).$ We have that there exists a $\gamma\in (0, \beta/\alpha)$, a constant $a\geq 0$, and a measure $\pi^{\Gamma}$ on $\Omega$ such that $a\pi^{\Gamma}\equiv 0,$ $\int_{\Omega}\pi^{\Gamma}(\d\theta)M(\theta)<\infty,$ and  $\mathbf{N}^{\Gamma}$ can be represented by, for any $t>0,$ and any $A\in\mathcal{F}_t$ 
\begin{equation}\label{EL-decomp3}
\begin{split}
\mathbf{N}^{\Gamma}(A \,,\,  t<\zeta) =a\mathbb{E}^{\triangleleft}_0\left[\frac{1}{M(X_t)}\mathbf{1}_A\right] +\int_0^\infty\frac{\d r}{r^{1+\alpha\gamma}}\int_{\Omega}\pi^{\Gamma}(\d\theta)\mathbb{E}_{r\theta}[A \, ,\, t<\kappa_\Gamma].
\end{split}
\end{equation}  
If $a>0,$ the process $(X, \mathbb{P}^{\Gamma})$ has unique recurrent extension that leaves $0$ continuously, and $\gamma=\beta/\alpha$. 
\smallskip

Conversely, for each $\gamma\in(0,\beta/\alpha)$, and $\pi^{\Gamma}$ a non-trivial measure satisfying the above conditions,  there is a unique recurrent extension that leaves zero by a jump and such that 
$$\mathbf{N}^{\Gamma}\left(|X_{0+}|\in \d r, \arg(X_{0+})\in \d \theta\right)=\frac{\d r}{r^{1+\alpha\gamma}}\pi^{\Gamma}(\d \theta), \qquad r>0, \theta\in \Omega.$$

\smallskip

Finally, any self-similar recurrent extension with excursion measure $\mathbf{N}^{\Gamma}$ has an invariant measure 
\begin{equation*}
\begin{split}
\cenpi\!{}^\Gamma (\d x) &:=\mathbf{N}^{\Gamma}\left(\int^{\zeta}_{0}\mathbf{1}_{(X_t\in \d x)}\d t\right)
\\
&= a|x|^{\alpha -d-\beta}M(\arg(x))\d x+\int_0^\infty\frac{\d r}{r^{1+\alpha\gamma}}\int_{\Omega}\pi^{\Gamma}(\d\theta)\mathbb{E}_{r\theta}\left[\int^{\kappa_\Gamma}_{0}\mathbf{1}_{(X_t\in \d x)}\d t\right],
\end{split}
\end{equation*}
which is unique up to a multiplicative constant, and this measure is sigma-finite but not finite. 
\end{theorem}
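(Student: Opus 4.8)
The plan is to run the excursion-theoretic machinery of Rivero \cite{R05,R07} and Fitzsimmons \cite{Fitz06}. A self-similar recurrent extension $\cenX$ of $(X,\mathbb{P}^\Gamma)$ is determined, up to the normalisation of its local time at $0$, by the pair $(L,\mathbf{N}^\Gamma)$, and conversely It\^o's synthesis theorem builds such an extension from any $\mathbf{N}^\Gamma$ satisfying (i)--(iv) together with a local time. So the theorem splits into three tasks: classify the measures $\mathbf{N}^\Gamma$ obeying (i)--(iv); check each candidate is genuinely realised; and compute $\cenpi\!{}^\Gamma(\d x)=\mathbf{N}^\Gamma(\int_0^\zeta\mathbf 1_{(X_t\in\d x)}\d t)$, which by Maisonneuve's last-exit decomposition is automatically an invariant measure for $\cenX$. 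The first step is the dichotomy stated just before the theorem --- an extension of Section~2.2 of \cite{R05} using the Markov property (ii) and the scaling (iv') --- namely that $\mathbf{N}^\Gamma$ is carried either entirely by paths with $X_{0+}=0$ or entirely by paths with $X_{0+}\neq0$; this is what forces $a\pi^\Gamma\equiv0$, and it splits the analysis into a ``continuous'' and a ``jump'' case.

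\emph{Jump case.} Applying (ii) at a time $\varepsilon>0$ and letting $\varepsilon\downarrow0$ (using right-continuity of paths, continuity of $x\mapsto\mathbb{E}^\Gamma_x[\cdot]$, and $\sigma$-finiteness of $\mathbf{N}^\Gamma$ on $\{t<\zeta\}$, which comes from (iii)) gives $\mathbf{N}^\Gamma(A,t<\zeta)=\int_\Gamma\eta(\d x)\,\mathbb{E}_x[A,t<\kappa_\Gamma]$ for $A\in\mathcal F_t$, with $\eta(\d x):=\mathbf{N}^\Gamma(X_{0+}\in\d x)$. Feeding this into (iv') and using that $\{x\mapsto\mathbb{E}_x[f(X_s),s<\kappa_\Gamma]\}$ separates measures forces $\eta(\d x)=c^{\alpha\gamma}\eta(c\,\d x)$ for all $c>0$, i.e.\ $\eta(\d r\,\d\theta)=r^{-1-\alpha\gamma}\d r\,\pi^\Gamma(\d\theta)$ in polar coordinates. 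Computing $\mathbf{N}^\Gamma(1-\mathrm e^{-\zeta})$ and using that $\mathbb{E}_{r\theta}[1-\mathrm e^{-\kappa_\Gamma}]$ is comparable to $M(r\theta)=r^\beta M(\theta)$ for small $r$ (Proposition~\ref{BPWprop}) and bounded for large $r$, condition (iii) becomes exactly $\gamma\in(0,\beta/\alpha)$ and $\int_\Omega M\,\d\pi^\Gamma<\infty$. This is the second term of \eqref{EL-decomp3} with $a=0$. Conversely, for any such $(\gamma,\pi^\Gamma)$ this formula defines a measure satisfying (i)--(iv): (i)--(ii) are inherited from $(X,\mathbb{P}^\Gamma)$ once one sets the path to $0$ from $\kappa_\Gamma$ on, (iii) was just verified, and (iv') is the $1/\alpha$-self-similarity of $(X,\mathbb{P}^\Gamma)$ after substituting $r\mapsto c^{-1}r$; It\^o synthesis then yields a recurrent extension leaving $0$ by a jump with the stated entrance law, and uniqueness is clear because that jump-in law plus (ii) determine $\mathbf{N}^\Gamma$.

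\emph{Continuous case.} Here $\nu_t(\d y):=\mathbf{N}^\Gamma(X_t\in\d y,t<\zeta)$ is an entrance law for $(X,\mathbb{P}^\Gamma)$; since $M$ is $\mathbb{P}^\Gamma$-harmonic, $(M(y)\nu_t(\d y))_{t>0}$ is an entrance law for the self-similar Markov process $(X,\mathbb{P}^\triangleleft)$ of Theorem~\ref{conditioned}, and continuous exit forces $\nu_t$, hence $M\nu_t$, to concentrate at the apex as $t\downarrow0$. By Theorem~\ref{0} the process $(X,\mathbb{P}^\triangleleft)$ is Feller at $0$, which makes the entrance law from $0$ unique (equal to the marginals of $\mathbb{P}^\triangleleft_0$); hence $M(y)\nu_t(\d y)\propto\mathbb{P}^\triangleleft_0(X_t\in\d y)=CM(y)n_t(y)\d y$, so $\nu_t\propto n_t$, and then (ii) identifies $\mathbf{N}^\Gamma(A,t<\zeta)=a\,\mathbb{E}^\triangleleft_0[M(X_t)^{-1}\mathbf 1_A]$, the consistency in $t$ being exactly the $\mathbb{P}^\triangleleft_0$-supermartingale property of $M(X_t)^{-1}$, whose defect accounts for the excursions already completed. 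Substituting into (iv') and using the scaling of $n$ from \eqref{scalen} forces $\gamma=\beta/\alpha$. Conversely this $\mathbf{N}^\Gamma$ satisfies (i)--(iv): in particular $\mathbf{N}^\Gamma(1-\mathrm e^{-\zeta})=aC\int_0^\infty\mathrm e^{-t}t^{-\beta/\alpha}\d t<\infty$ by \eqref{scalen2} since $\beta/\alpha<1$; It\^o synthesis gives the unique (for fixed local-time normalisation) recurrent extension leaving $0$ continuously.

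\emph{Invariant measure and conclusion.} Invariance of $\cenpi\!{}^\Gamma$ and its uniqueness up to a multiplicative constant are the standard excursion-theoretic facts, the latter because $\cenX$ reaches every open subset of $\Gamma$ from $0$. The jump-case formula is immediate from $\mathbf{N}^\Gamma=\int\eta\,\d\mathbb{P}^\Gamma$ and Tonelli. In the continuous case $\cenpi\!{}^\Gamma(\d x)=aC\bigl(\int_0^\infty n_t(x)\d t\bigr)\d x$; the change of variables in \eqref{scalen} shows $x\mapsto\int_0^\infty n_t(x)\d t$ is homogeneous of degree $\alpha-d-\beta$, and I would identify its angular part with a constant multiple of $M(\arg x)$ via the Riesz--Bogdan--\.Zak/Kelvin transform (cf.\ \cite{BZ,Deep1,ACGZ}), which carries $(X,\mathbb{P}^\triangleleft)$ to the stable process conditioned to be absorbed at the apex and thereby re-expresses this potential through $M$ (Markov additive renewal theory \cite{Alsmeyer1994}, applied to the driving MAP, is the natural alternative tool). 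Finally $\cenpi\!{}^\Gamma$ is $\sigma$-finite: on a compact $K\subset\Gamma$ its continuous part is $aC\int_K|x|^{\alpha-d-\beta}M(\arg x)\d x<\infty$, and its jump part is controlled using the symmetry and degree-$(\alpha-d)$ homogeneity of $u_\Gamma$ together with the apex asymptotics \eqref{P00}, which give that $\int_K u_\Gamma(r\theta,x)\d x$ is bounded by a constant times $r^\beta M(\theta)$ near $r=0$ and by a constant times $r^{\alpha-d-\beta}M(\theta)$ as $r\to\infty$, so both tails of the $r$-integral converge thanks to $\gamma<\beta/\alpha$, $\alpha<d$ and $\int_\Omega M\,\d\pi^\Gamma<\infty$. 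It is not finite because $\cenpi\!{}^\Gamma(\Gamma)=aC\int_0^\infty t^{-\beta/\alpha}\d t=\infty$ in the continuous case (by \eqref{scalen2}) and $\mathbb{E}_{r\theta}[\kappa_\Gamma]=\int_0^\infty\mathbb{P}_{r\theta}(\kappa_\Gamma>t)\d t=\infty$ in the jump case (Proposition~\ref{BPWprop} again, since $\beta/\alpha<1$). The main obstacle is the continuous-case uniqueness --- controlling an abstract continuous-exit $\mathbf{N}^\Gamma$ well enough to invoke the Feller property of Theorem~\ref{0} --- together with the Kelvin-transform identity $\int_0^\infty n_t(\cdot)\,\d t\propto M(\arg\,\cdot)$ feeding the invariant measure, which is precisely where the boundary Harnack estimates of \cite{BPW} and the MAP machinery are indispensable.
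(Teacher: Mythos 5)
Your proposal is correct in outline but organizes the key identifications differently from the paper. Most notably, you begin by assuming the dichotomy that forces $a\pi^\Gamma\equiv0$ (citing \cite{R05}), whereas the paper derives it as a byproduct of a unified entrance-law decomposition (Lemma~\ref{lemmadecomposition}): applying the Markov property at $s\downarrow0$ and splitting across $\{|X_s|<\epsilon\}$ and its complement yields \eqref{EL-decomp} with \emph{both} an $a$-term and a $\pi^\Gamma$-term, after which the scaling condition (iv') forces one of them to vanish --- the explicit prefactor $(\beta-\alpha\gamma)$ in the formula for $\pi^\Gamma$ kills the jump-in law when $\gamma=\beta/\alpha$, and comparing $\mathbf{N}^\Gamma(\zeta>t)\geq aCt^{-\beta/\alpha}$ against $t^{-\gamma}\mathbf{N}^\Gamma(\zeta>1)$ kills $a$ when $\gamma<\beta/\alpha$. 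So the dichotomy is an output of the decomposition, not a hypothesis to be imported. For the continuous part you invoke the Feller property of Theorem~\ref{0} to identify $M\nu_t$ with a multiple of $\mathbb{P}^\triangleleft_0(X_t\in\cdot)$; the paper instead plugs the explicit limit \eqref{P0} from \cite{BPW} directly into the decomposition lemma, so Theorem~\ref{0} is never used here. Your route also quietly needs that $M\nu_t$ concentrates at the apex with \emph{constant finite} total mass (which comes from harmonicity of $M$ under the Markov property of $\mathbf{N}^\Gamma$) before uniqueness of the Feller entrance law can be applied; this is true but a step you only gesture at. Finally, for $\int_0^\infty n_t(\cdot)\,\d t\propto H$ you propose the Kelvin transform or MAP renewal theory and flag it as the main obstacle; the paper has Lemma~\ref{excursionpotential}, a direct dominated-convergence computation against the Green-function bounds of \cite{BPW} (reducing to Lemma 7 of that reference), which is considerably lighter. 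Both routes lead to the theorem; the paper's is more self-contained and avoids the uniqueness-of-entrance-law and Kelvin-transform machinery that your version leans on.
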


%One of the reasons to look at the recurrent extension of $X^\Gamma$ is that the question of its existence is intimately related to the question of the existence of a stable process conditioned to absorb continuously into the apex of the cone $\Gamma$.  To understand what we mean by the latter process, we have the following result.
As a final remark, we note that, whilst we have provided a recurrent extension from the apex of the cone, one might also consider the possibility of a recurrent extension from the entire boundary of the cone. We know of no specific work in which there is a recurrent extension from a set rather than a point. That said, one may consider the work on censored stable processes as meeting this notion in some sense; see e.g. \cite{BBC, KPW}

\section{Auxiliary results for associated MAPs}\label{radial}

 \smallskip
 
By an $\mathbb{R}\times\mathbb{S}^{d-1}$-valued Markov additive process (MAP), we mean here that  $(\xi,\Theta) =( (\xi_t, \Theta_t), t\geq 0)$  is a regular Strong Markov Process on $\mathbb{R}\times\mathbb{S}^{d-1}$ (possibly with a cemetery state),  with probabilities $\mathbf{P}:=(\mathbf{P}_{x,\theta}, x\in\mathbb{R}, \theta\in\mathbb{S}^{d-1})$,  such that, for any $t\geq 0$, the conditional law of the process $((\xi_{s+t}-\xi_t,\Theta_{s+t}):s\geq 0)$, given  $\{(\xi_u,\Theta_u), u\leq t\},$ is that of $(\xi,\Theta)$ under $\mathbf{P}_{0,\theta}$, with $\theta=\Theta_t$. For a MAP pair $(\xi, \Theta)$, we call $\xi$ the {\it ordinate} and $\Theta$ the {\it modulator}.
 \smallskip

A very useful fact in the theory of self-similar Markov process is the so called Lamperti-Kiu transform, which is one of the main results in \cite{ACGZ}, extending the seminal work of Lamperti in \cite{L72}, and establishes that there is a bijection between self-similar Markov processes (ssMp) in $\mathbb{R}^d,$  and $\mathbb{R}\times\mathbb{S}^{d-1}$-valued MAPs. Indeed, for any ssMp in $\mathbb{R}^d$, say $Z,$ there exists a unique $\mathbb{R}\times\mathbb{S}^{d-1}$-valued MAP $(\xi,\Theta)$ such that $Z$ can be represented as 
   \begin{equation}\label{eq:lamperti_kiu}
    Z_t  = \begin{cases}\exp\{\xi_{\varphi(t)}\}\Theta_{\varphi(t)}, & t < I_\infty,\\  
    \Delta, & t\geq I_{\infty}, \end{cases}
   \end{equation}
  where $I_t := \int_0^t {\rm e}^{\alpha\xi_t}\d t
$, $t\geq 0$ (so that $I_\infty$ is the almost sure monotone limit $  =\lim_{t\to\infty}I_t$) and
   \begin{equation}
 \label{varphi}
 \varphi(t) = \inf\{s>0: \int_0^s{\rm e}^{\alpha\xi_u}\d u>t\}, \qquad t\geq 0.
 \end{equation}
 Reciprocally, given a $\mathbb{R}\times\mathbb{S}^{d-1}$-valued MAP $(\xi,\Theta)$ the process defined in (\ref{eq:lamperti_kiu}) is an $\mathbb{R}^d$-valued ssMp. This is known as the Lamperti-Kiu representation of the ssMp $Z$. When the lifetime of $Z$ is infinite a.s. we say that law is conservative, which is equivalent to require that $I_\infty = \infty$ almost surely.

\smallskip

In Theorem 3.13 of \cite{ALEAKyp}, it was shown that the MAP underlying the stable process, for whom we will henceforth reserve the notation $\mathbf{P}=(\mathbf{P}_{x,\theta}, x\in\mathbb{R}, \theta\in\mathbb{S}^{d-1})$ for its probabilities,  is a pure jump process, such that $\xi$ and $\Theta$ jump simultaneously. Moreover, the instantaneous jump rate with respect to Lebesgue time $\d t$ when $(\xi_t, \Theta_t) = (x,\vartheta)$ is given by 
\begin{equation}
c(\alpha)\frac{{\rm e}^{yd}}{|{\rm e}^{y}\phi-   \vartheta|^{\alpha+d}}\d y\sigma_1(\d\phi), \qquad t\geq 0,
\end{equation}
where $\sigma_1(\phi)$ is the surface measure on $\mathbb{S}^{d-1}$ normalised to have unit mass and 
\[
c(\alpha) =2^{\alpha-1}\pi^{-d}\frac{\Gamma((d+\alpha)/2)\Gamma(d/2)}{\big|\Gamma(-\alpha/2)\big|}.
\]
More precisely, suppose that  $f$ is a bounded measurable function on $(0,\infty)\times\mathbb{R}^2\times\mathbb{S}^{d-1}\times\mathbb{S}^{d-1}$ such that $f(\cdot, \cdot, 0,\cdot, \cdot) = 0$, then, for all $\theta\in\mathbb{S}^{d-1}$,
\begin{equation}
\begin{split}
&\mathbf{E}_{0,\theta}\left(\sum_{s>0} f(s,\xi_{s-},\Delta\xi_{s},\Theta_{s-}, \Theta_s)\right)\\
&=\int_{(0,\infty)\times\mathbb{R}\times\mathbb{S}^{d-1}}V_\theta(\d s, \d z, \d \vartheta)\int_{\mathbb{S}^{d-1}}\int_{\mathbb{R}}\sigma_1(\d\phi)\d y\frac{c(\alpha){\rm e}^{yd}}{|{\rm e}^{y}\phi-   \vartheta|^{\alpha+d}}f(s, z, y,\vartheta, \phi),
\end{split}
\end{equation}
where $\Delta\xi_s = \xi_s - \xi_{s-}$,
\[
V_\theta(\d t, \d z, \d \vartheta) = \mathbf{P}_{0,\theta}(\xi_t\in \d z, \Theta_t\in \d\vartheta)\d t , \qquad  z\in \mathbb{R},\ \vartheta\in\mathbb{S}^{d-1}, t>0,
\]
$\sigma_1(\d\phi)$ is the surface measure on $\mathbb{S}^{d-1}$ normalised to have unit mass and 
\[
c(\alpha) =2^{\alpha-1}\pi^{-d}\frac{\Gamma((d+\alpha)/2)\Gamma(d/2)}{\big|\Gamma(-\alpha/2)\big|}.
\] Similar calculations as those used in \cite{ALEAKyp} show that the L\'evy system $(H,L)$ of the MAP $(\xi,\Theta)$, associated to the stable process $(X, \mathbb{P})$, see \cite{Cinlar2} for background, is given by the additive functional $H_t:=t,\ t\geq 0$ and the kernel 
\begin{equation}\label{L-kernel}
L_{\vartheta}(\d\phi, \d y):=c(\alpha)\frac{{\rm e}^{yd}}{|{\rm e}^{y}\phi-   \vartheta|^{\alpha+d}}\sigma_1(\d\phi)\d y.\end{equation} So, for any positive predictable process $(G_t,\ t\geq 0),$ and any function $f$ as above, one has
\begin{equation}\label{freejump}
\begin{split}
&\mathbf{E}_{0,\theta}\left(\sum_{s>0} G_sf(s,\xi_{s-},\Delta\xi_{s},\Theta_{s-}, \Theta_s)\right)\\
&=\mathbf{E}_{0,\theta}\left(\int^{\infty}_0\d s G_s \int_{\mathbb{S}^{d-1}}\int_{\mathbb{R}}L_{\Theta_s}(\d\phi, \d y)f(s, \xi_s, y,\Theta_s, \phi)\right).
\end{split}
\end{equation}

%\begin{remark}\label{|X|}\rm
%Although it is not apparent from the above description, for the MAP underlying the stable process the process $\xi$ alone is a L\'evy process. This is a consequence of the isotropic property of $\Theta$. Indeed, for a general MAP  the dependency of $\xi$ on $\Theta$ means $\xi$ cannot be described as an autonomous processes.  NEEDED?
%%For the case of the stable process, the fact that $\xi$ is a L\'evy process means that 
%%\[
%%|X_t| = {\rm e}^{{\xi}_{\varphi(t)}}, \qquad t\geq 0.
%%\]
%%Thus, the radial process $|X|$ conforms to the definition of a positive self-similar Markov process.
%%As it will be useful later, we make a note here of the Laplace exponent of $\xi$ for the radial process. 
%%It was computed in Caballero et al. \cite{CPP} and takes the form 
%%\begin{equation}
%%\psi(z) = -2^\alpha\frac{\Gamma(\frac{1}{2}(\alpha-z ))}{\Gamma(-\frac{1}{2}z)}\frac{\Gamma(\frac{1}{2}(z +d))}{\Gamma(\frac{1}{2}(z +d-\alpha))}, %\qquad \re(z)\in (-d,\alpha).
%%\label{a}
%%\end{equation}
%%for $\re(z)\in (-d,\alpha)$.
%\hfill$\diamond$
% \end{remark}

We are interested in the characterisation of the L\'evy system of the MAP associated to $(X, \mathbb{P}^\triangleleft)$, via the Lamperti-Kiu transform. To this end, suppose now we write $\mathbf{P}^\triangleleft:=(\mathbf{P}^\triangleleft_{x,\theta}, (x,\theta)\in\mathbb{R}\times\Omega)$ for the probabilities of the MAP that underly $(X,\mathbb{P}^\triangleleft)$.
\begin{proposition}\label{jumprate}
 For any positive predictable process $(G_t,\ t\geq 0),$ and any function $f:(0,\infty)\times\mathbb{R}^2\times\mathbb{S}^{d-1}\times\mathbb{S}^{d-1}\to[0,\infty)$, bounded and measurable, such that $f(\cdot, \cdot, 0,\cdot, \cdot) = 0$, one has
\begin{equation}\label{compensation}
\begin{split}
&\mathbf{E}^\triangleleft_{0,\theta}\left(\sum_{s>0} G_sf(s,\xi_{s-},\Delta\xi_{s},\Theta_{s-}, \Theta_s)\right)\\
&=\mathbf{E}^\triangleleft_{0,\theta}\left(\int^{\infty}_0\d s\, G_s \int_{\mathbb{S}^{d-1}}\int_{\mathbb{R}}L^\triangleleft_{\Theta_s}(\d\phi, \d y)f(s, \xi_s, y,\Theta_s, \phi)\right), \quad \forall \theta\in \Omega,
\end{split}
\end{equation}
where $$L^\triangleleft_{\theta}(\d\phi, \d y):={\rm e}^{\beta y}\frac{M(\phi)}{M(\theta)}L_{\theta}(\d\phi, \d y),\qquad \phi\in\mathbb{S}^{d-1}, y \in \mathbb{R}.$$ That is to say that, under $\mathbf{P}^\triangleleft,$ the instantaneous jump rate when $(\xi_t, \Theta_t) = (x,\vartheta)$ is 
\[
c(\alpha)\frac{{\rm e}^{y(\beta+d)}}{|{\rm e}^{y}\phi-   \theta|^{\alpha+d}}\frac{M(\phi)}{M(\vartheta)}\d y\sigma_1(\d\phi)\d t, \qquad t\geq 0, \theta, \phi\in\Omega
\]
\end{proposition}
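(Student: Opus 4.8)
The plan is to transfer the known Lévy system description for the MAP $(\xi,\Theta)$ under $\mathbf{P}$, recorded in \eqref{freejump}, to the conditioned law $\mathbf{P}^\triangleleft$ by exploiting the explicit Radon--Nikodym derivative \eqref{MCOM} together with the Lamperti--Kiu correspondence. First I would translate the $h$-transform \eqref{MCOM} into a statement about the MAP: since the Lamperti--Kiu transform \eqref{eq:lamperti_kiu} writes $X_t = \exp\{\xi_{\varphi(t)}\}\Theta_{\varphi(t)}$ and $M$ is homogeneous of degree $\beta$ by \eqref{scaling}, one has $M(X_t) = \exp\{\beta\xi_{\varphi(t)}\} M(\Theta_{\varphi(t)})$, and hence after the (random) Lamperti time change the change of measure \eqref{MCOM} on $\FF_t$ becomes, on the natural filtration of the MAP, a multiplicative functional of the form
\[
\left.\frac{\d\mathbf{P}^\triangleleft_{0,\theta}}{\d\mathbf{P}_{0,\theta}}\right|_{\mathcal{G}_t}
= \mathbf{1}_{(t<\varsigma)}\,{\rm e}^{\beta(\xi_t - 0)}\frac{M(\Theta_t)}{M(\theta)},
\]
where $\varsigma$ is the MAP lifetime (exit of $\Theta$ from $\Omega$); one must check that the time change does not disturb this (it does not, because $M(X)$ evaluated along the path only depends on the position, and the Lamperti clock is adapted). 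That the right-hand side is a mean-one positive martingale for the MAP is exactly the martingale assertion in Theorem \ref{conditioned}(i) read through Lamperti--Kiu.

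Next, with the change of measure in hand, I would compute the compensator of the jump functional under $\mathbf{P}^\triangleleft$ by the standard Girsanov-type recipe for purely-discontinuous processes. Write the left-hand side of \eqref{compensation} as a $\mathbf{P}^\triangleleft$-expectation, use the density to rewrite it as a $\mathbf{P}$-expectation with the extra factor ${\rm e}^{\beta\xi_t}M(\Theta_t)/M(\theta)$ evaluated appropriately, and apply the optional projection / predictable projection so that the jump at time $s$ contributes the density ratio between times $s-$ and $s$, namely ${\rm e}^{\beta(\xi_s-\xi_{s-})}M(\Theta_s)/M(\Theta_{s-}) = {\rm e}^{\beta\Delta\xi_s} M(\Theta_s)/M(\Theta_{s-})$. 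Concretely, in \eqref{freejump} replace the test function $f(s,\xi_{s-},\Delta\xi_s,\Theta_{s-},\Theta_s)$ by $f(s,\cdot,\cdot,\cdot,\cdot)$ multiplied by this ratio and by $G_s$ times the remaining density pieces; the martingale property and a monotone-class argument let one strip everything back down to \eqref{compensation}. Reading off the kernel, the factor ${\rm e}^{\beta\Delta\xi_s}M(\Theta_s)/M(\Theta_{s-})$ multiplied into $L_\theta(\d\phi,\d y) = \sigma_1(\d\phi)\,\d y\, c(\alpha){\rm e}^{yd}|{\rm e}^y\phi-\theta|^{-\alpha-d}$ with $y = \Delta\xi_s$ gives precisely
\[
L^\triangleleft_{x,\theta}(\d\phi,\d y) = {\rm e}^{\beta(y-x)}\frac{M(\phi)}{M(\theta)}\,L_\theta(\d\phi,\d y),
\]
after noting that the pre-jump ordinate $\xi_{s-}=x$ and pre-jump modulator $\Theta_{s-}=\theta$ appear in the density as ${\rm e}^{-\beta x}/M(\theta)$ while the post-jump values give ${\rm e}^{\beta(x+y)}M(\phi)$, so the $x$-dependence combines to ${\rm e}^{\beta(y-x)}$ — here $x = \xi_{s-}$ and $y = \Delta\xi_s$, so that the post-jump ordinate is $x+y$, consistent with the claimed rate $c(\alpha){\rm e}^{yd}{\rm e}^{\beta(y-x)}|{\rm e}^y\phi-\theta|^{-\alpha-d}M(\phi)/M(\theta)$ once one writes the jump in terms of the increment. (I would double-check the exponent bookkeeping carefully here, since the statement writes the rate with ${\rm e}^{\beta(y-x)}$, which is the increment form, and this must be reconciled with the additive convention that $\Delta\xi_s$ is the jump size.)

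The main obstacle I anticipate is the interplay between the Lamperti time change and the Girsanov change of measure: the density \eqref{MCOM} lives on the filtration $(\FF_t)$ of $X$ in the original clock, whereas the L\'evy system \eqref{freejump} is stated for the MAP in its own clock, and one has to be careful that (a) the optional stopping needed to pass the density across $\varphi$ is legitimate (the Lamperti clock $\varphi(t)$ is a sequence of finite stopping times for the MAP filtration, so this is fine but needs a line), and (b) the jumps of $X$ are in bijection with the jumps of $(\xi,\Theta)$ with matching jump times after time change, so the sum over $s>0$ in \eqref{compensation} genuinely corresponds to the sum in \eqref{freejump}. Once these identifications are made cleanly, the computation is the standard exponential-martingale change-of-measure formula for the compensator of a point process, and the rest is the algebraic simplification using homogeneity of $M$. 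A secondary point to verify is that $M(\phi)/M(\theta)$ is well-defined and strictly positive for $\theta,\phi\in\Omega$, which follows from $M$ being strictly positive and continuous on $\Gamma$ (hence on $\Omega$) — outside $\Omega$ the kernel $L^\triangleleft$ charges nothing because the conditioned modulator never leaves $\Omega$, consistent with $M\equiv 0$ off $\Gamma$.
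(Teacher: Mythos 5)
Your approach matches the paper's: both proofs hinge on transferring the absolute-continuity relation \eqref{MCOM} through the Lamperti--Kiu time change (via optional sampling at the finite stopping times $A(t)$) to obtain the MAP-level density ${\rm e}^{\beta\xi_t}M(\Theta_t)/M(\theta)\mathbf{1}_{(t<\texttt{k}^\Omega)}$, and then read off how this multiplicative functional transforms the L\'evy system \eqref{freejump}. Where you invoke ``the standard Girsanov-type recipe for purely-discontinuous processes'' as a black box, the paper makes the same step concrete by applying the semi-martingale integration-by-parts formula to $\mathcal{M}_t\Sigma_t$ (with $\Sigma$ the jump functional and $\mathcal{M}$ the density martingale), killing the martingale term and using that $\Sigma$ has bounded variation; this is a perfectly routine substitution and the two arguments buy essentially the same thing.

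However, the exponent bookkeeping you flagged is a genuine problem, and your attempted reconciliation is wrong. You correctly compute that the ratio of densities across a jump is ${\rm e}^{\beta\Delta\xi_s}M(\Theta_s)/M(\Theta_{s-}) = {\rm e}^{\beta y}M(\phi)/M(\theta)$; multiplying this into $L_\theta(\d\phi,\d y)$ yields ${\rm e}^{\beta y}\frac{M(\phi)}{M(\theta)}L_\theta(\d\phi,\d y)$, with \emph{no} dependence on $x=\xi_{s-}$. Your next sentence, which manufactures a factor ${\rm e}^{-\beta x}$ to match the stated $L^\triangleleft_{x,\theta}={\rm e}^{\beta(y-x)}\frac{M(\phi)}{M(\theta)}L_\theta$, is an arithmetic error: $\frac{{\rm e}^{\beta(x+y)}M(\phi)}{{\rm e}^{\beta x}M(\theta)}={\rm e}^{\beta y}M(\phi)/M(\theta)$, not ${\rm e}^{\beta(y-x)}M(\phi)/M(\theta)$. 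In fact the proposition's displayed kernel $L^\triangleleft_{x,\theta}$ contains a spurious ${\rm e}^{-\beta x}$; the paper's own proof, which ends with the integrand $c(\alpha){\rm e}^{y(\beta+d)}|{\rm e}^y\phi-\Theta_s|^{-(\alpha+d)}M(\phi)/M(\Theta_s)$, gives ${\rm e}^{\beta y}$ and no $x$-dependence. That has to be the correct answer: a L\'evy system kernel for a MAP can depend only on the modulator $\Theta_{s-}$, not on the ordinate $\xi_{s-}$, or else the $(\xi,\Theta)$ pair would fail the defining translation invariance. You should trust your first computation and discard the ${\rm e}^{-\beta x}$.
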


%\begin{theorem}\label{jumprate}Under $\mathbf{P}^\triangleleft_{x,\theta}$, $x\in\mathbb{R}$, $\theta\in\mathbb{S}^{d-1}$, the MAP $(\xi, \Theta)$ is a pure jump with both processes jumping simultaneously. For positive, bounded measurable function on $[0,\infty)\times\mathbb{R}\times\mathbb{S}^{d-1}\times\mathbb{S}^{d-1}$ such that $f(\cdot,0,\cdot, \cdot) = 0$,
%\begin{align}
%\mathbf{E}^\triangleleft_{0,\theta}&\left(\sum_{s>0} {\rm e}^{-qs}f(\Delta\xi_{s},\Theta_{s-}, \Theta_s)\right)\notag\\
%&\hspace{1cm}=\int_{\mathbb{S}^{d-1}}\int_{\mathbb{S}^{d-1}}\int_{\mathbb{R}}\d\sigma_1(\phi)  \,  {_{q}}V^\triangleleft_\theta(\d \vartheta)\d y\frac{c(\alpha){\rm e}^{y(d+\beta)}}{|{\rm e}^{y}\phi-   \vartheta|^{\alpha+d}}\frac{M(\phi)}{M(\vartheta)}f(y,\vartheta, \phi),
%\label{compensation}
%\end{align}
%where, recalling the relation between $\Omega$ and $\Gamma$ was given in \eqref{GammaOmega},
%\[
%{_q}V^\triangleleft_\theta(\d \vartheta) =\int_{0}^\infty{\rm e}^{-qt}{\rm e}^{\beta y} \frac{M(\vartheta)}{M(\theta)}\mathbf{P}_{0,\theta}(\Theta_t\in \d\vartheta, \,\xi_t\in \d y \,t<\emph{\texttt{k}}^\Omega)\d t 
%=\int_{0}^\infty {\rm e}^{-qt}\mathbf{P}^\triangleleft_{0,\theta}(\Theta_t\in \d\vartheta)\d t,
%\]
%for $ \theta, \vartheta\in\Omega$,
%where $\texttt{k}^\Omega = \inf\{t>0: \Theta_t \not\in\Omega\}$.
%That is to say the instantaneous jump rate when $(\xi_t, \Theta_t) = (x,\vartheta)$ is 
%\[
%c(\alpha)\frac{{\rm e}^{y(d+\beta)}}{|{\rm e}^{y}\phi-   \theta|^{\alpha+d}}\frac{M(\phi)}{M(\vartheta)}\d y\sigma_1(\d\phi)\d t, \qquad t\geq 0, \theta, \phi\in\Omega
%\]
%
%\end{theorem}

A better understanding of the MAP that underlies $(X, \mathbb{P}^\triangleleft)$, allows us to deduce e.g. the following result. For $a>0$, define by
\[
\tau^\ominus_{a}=\inf\{t>0: |X_t|> a\}
\]
 and 
 $\overline{m}(\tau^{\ominus}_a -) = \sup\{t<\tau^{\ominus}_a : |X_t| =\sup_{s<t}|X_s| \}$ the last radial maximum before exiting the ball of radius $a$.

\begin{theorem}\label{oo}There exists a probability measure, $\upsilon^*$ on $\Omega$, which is invariant in the sense that 
   \[
\mathbb{P}_{\upsilon^*}^\triangleleft \left(\arg(X_{\tau^\ominus_{{\rm e}}})\in \d \theta\right)   : = \int_{\Omega}\upsilon^*(\d \phi)\mathbb{P}_\phi^\triangleleft \left(\arg(X_{\tau^\ominus_{{\rm e}}})\in \d \theta\right) = \upsilon^*(\d \theta), \qquad \theta\in\Omega,
   \]
such that, for all $x\in\Gamma$, 
under $\mathbb{P}^{\triangleleft}_{r\theta}$, the triple 
\[
\left( \frac{{X}_{\overline{m}(\tau^{\ominus}_a -)}}{a}, \frac{X_{\tau_a^{\ominus}-}}{a},\frac{X_{\tau^{\ominus}_a}}{a}\right)
\]
   converges  in distribution as $a\to\infty$ to a limit which is independent of $r$ and $\theta$ and non-degenerate. Equivalently,  by scaling,  the triple 
   \[
   (X_{\tau_1^{\ominus}},X_{\tau_1^{\ominus}-},X_{m(\tau_1^{\ominus}-)})
   \]  converges in distribution under $\mathbb{P}_{x}$, as $\Gamma\ni x\to0$,  to the same limit.  More precisely, for any  continuous and bounded $f:\Gamma^3\to[0,\infty)$,
   \[
\lim_{\Gamma\ni x\to0}\mathbb{E}^{\triangleleft}_{x}\left[f(X_{\tau_1^{\ominus}},X_{\tau_1^{\ominus}-},X_{m(\tau_1^{\ominus}-)})\right]
= \frac{1}{\mathbb{E}^\triangleleft_{\upsilon^*}[\log|X_{\tau^\ominus_{\rm e}}|]}
\int_{\Omega}  \int_0^{\infty} \upsilon^*(\d \phi){\rm d} r \,G(r,\phi),
\]
where 
\[
G(r,\phi)=\mathbb{E}^{\triangleleft}_{{\rm e}^{-r} \phi}\left[f(X_{\tau_1^{\ominus}},X_{\tau_1^{\ominus}-},X_{m(\tau_1^{\ominus}-)}) \mathbf{1}_{(\tau^\ominus_1\leq \tau_{{\rm e}^{1-r}}^{\ominus})}\right].
\]
\end{theorem}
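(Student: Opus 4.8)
The plan is to read the result as a Markov-additive renewal theorem applied to the MAP $(\xi,\Theta)$ underlying $(X,\mathbb{P}^\triangleleft)$, after an appropriate change of sign so that the relevant functional is increasing. First I would observe that, under $\mathbb{P}^\triangleleft_{r\theta}$, writing $R_t = \log|X_t|$ and $\Theta_t = \arg(X_t)$, the pair $(-R,\Theta)$ is, up to the Lamperti time change, the ordinate–modulator pair of a MAP; the exit time $\tau^\ominus_a$ of $X$ from the ball of radius $a$ corresponds to the first passage of $R$ above $\log a$, i.e. the first passage of the ordinate of the dual MAP below $-\log a$, and the last radial maximum $\overline m(\tau^\ominus_a-)$ corresponds to the last time the ordinate attains its running infimum before that passage. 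Thus the triple in the statement is, in MAP language, built from (the modulator at) the last ladder epoch before first passage, the undershoot, and the overshoot of the ascending ladder process of $-R$. I would then invoke Proposition \ref{jumprate} (and the structure of $\mathbf{P}^\triangleleft$ inherited from the $M$-transform in Theorem \ref{conditioned}) to guarantee that this MAP is genuinely transient in the ordinate, has a nondegenerate modulator, and that its ascending ladder MAP is well-defined and nondegenerate; the homogeneity \eqref{scaling} of $M$ is what makes the ladder increments stationary in the right sense.

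The next step is to establish that the modulator of the ascending ladder MAP — equivalently, the chain $\arg(X_{\tau^\ominus_{{\rm e}^k}})$, $k\in\mathbb{N}$, sampled at successive radial records at scales ${\rm e}^k$ — is a Harris-recurrent Markov chain on $\Omega$ with a unique stationary distribution $\upsilon^*$. This is where the boundary Harnack principle and the explicit jump kernel $L^\triangleleft$ enter: one uses them to show the one-step transition kernel $\mathbb{P}^\triangleleft_\phi(\arg(X_{\tau^\ominus_{\rm e}})\in\d\theta)$ satisfies a uniform (Doeblin-type) minorization over $\phi\in\Omega$, which gives existence, uniqueness, and geometric ergodicity of $\upsilon^*$, and in particular finiteness of $\mathbb{E}^\triangleleft_{\upsilon^*}[\log|X_{\tau^\ominus_{\rm e}}|]$ (the mean ladder-height increment under stationarity). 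Given this, the Markov renewal theorem of Alsmeyer \cite{Alsmeyer1994} applies to the Markov renewal process whose inter-arrival times are the increments $\log|X_{\tau^\ominus_{{\rm e}^{k+1}}}| - \log|X_{\tau^\ominus_{{\rm e}^k}}|$ and whose driving chain is $(\arg(X_{\tau^\ominus_{{\rm e}^k}}))_k$: it yields, as $a\to\infty$, convergence of the joint law of (modulator at the last record before level $\log a$, age, excess) to the stationary law, namely $\upsilon^*(\d\phi)$ for the modulator together with the size-biased/stationary-overshoot law expressed through the integral $\int_0^\infty \d r\, G(r,\phi)$ normalized by the mean $\mathbb{E}^\triangleleft_{\upsilon^*}[\log|X_{\tau^\ominus_{\rm e}}|]$. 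Translating back through $X_t = {\rm e}^{R_t}\Theta_t$ and dividing by $a$ gives the stated limit for $(X_{\overline m(\tau^\ominus_a-)}/a, X_{\tau^\ominus_a-}/a, X_{\tau^\ominus_a}/a)$, and the self-similarity \eqref{1/a} rephrases the $a\to\infty$ limit under $\mathbb{P}^\triangleleft_{r\theta}$ as the $\Gamma\ni x\to 0$ limit under $\mathbb{P}^\triangleleft_x$ — here one additionally uses Theorem \ref{0}, so that the entrance from $0$ makes the limiting statement under $\mathbb{P}^\triangleleft_x$ meaningful and the dependence on the starting point washes out.

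I would then clean up the bookkeeping: check that the MAP ordinate $-R$ is not a compound Poisson–type lattice process so that the nonlattice (spread-out) hypothesis of Alsmeyer's theorem holds — this follows from the absolute continuity of the jump kernel \eqref{L-kernel} in the $y$-variable — and verify that $f$ continuous and bounded on $\Gamma^3$ transfers to a function to which the Markov renewal theorem applies after the change of variables, with the test functional depending jointly on the pre-record position (giving $X_{m(\tau^\ominus_1-)}$), the undershoot (giving $X_{\tau^\ominus_1-}$) and the overshoot (giving $X_{\tau^\ominus_1}$); the angular and radial coordinates decouple in the limit precisely because $\upsilon^*$ is the stationary law of the ladder modulator and, conditionally on it, the overshoot has the stationary excess distribution, which is what produces the product form $\upsilon^*(\d\phi)\,\d r\,G(r,\phi)$.

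The main obstacle I expect is the second step: proving Harris recurrence with a uniform minorization for the ladder modulator chain on $\Omega$. The ordinate and modulator jump simultaneously and the kernel $L^\triangleleft$ degenerates near $\partial\Gamma$ through the factor $M(\phi)/M(\theta)$, so one must combine the boundary Harnack principle (to control $M$ near the boundary and keep the conditioned process away from $\partial\Gamma$ long enough to perform a regeneration) with a quantitative estimate that a single excursion between radial records can reach any prescribed angular region with positive probability bounded below uniformly in the starting angle. Establishing this uniform lower bound — rather than just irreducibility — is the technical heart of the argument and is what ultimately delivers both the existence of $\upsilon^*$ and the finiteness of the normalizing mean.
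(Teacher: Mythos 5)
Your overall strategy---treating the quantities at first radial exit as ladder data of a Markov additive process, establishing a uniform Doeblin-type minorization for a discrete ladder modulator chain via the boundary Harnack principle, and then invoking Alsmeyer's Markov additive renewal theorem---is the same as the paper's. The paper, however, deliberately sidesteps the continuous-time ascending ladder modulator $\Theta^+$ (whose stationarity is not yet known at this point in the development; that is precisely what Theorem \ref{ooC} establishes afterwards), and instead constructs a purpose-built discrete ladder chain $((S_n,\Xi_n))$ via the stopping times $T_n=\inf\{t>T_{n-1}:|X_t|>{\rm e}\,|X_{T_{n-1}}|\}$; the renewal equation (Lemma \ref{firstRT}) is then written directly in terms of the renewal function of this discrete chain, not in terms of overshoots/undershoots of $H^+$. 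Your ``equivalently, the chain $\arg(X_{\tau^\ominus_{{\rm e}^k}})$'' is the right replacement, so the skeleton of the argument is sound.

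There is one genuine gap. You propose to invoke Theorem \ref{0} to pass from the $\upsilon^*$-a.e. statement delivered by Alsmeyer's theorem to the limit along arbitrary $\Gamma\ni x\to 0$. This is circular: in the paper's logical order, Theorem \ref{oo} is an input to Theorem \ref{conditionapex}, which feeds Theorem \ref{BZcone}, which is then used to prove Theorem \ref{0}. The correct, non-circular argument (Lemma \ref{allx}) is self-contained: the Doeblin minorization \eqref{Harris2}, read together with the companion upper bound from BHP, forces $\mathbb{P}^\triangleleft_\theta(\Xi_1\in\cdot)\ll\upsilon^*$ uniformly in $\theta$; one then decomposes the renewal sum as its $n=0$ term (shown to vanish as $|x|\to 0$ by the BHP estimate on the tail of $S_1$) plus a shifted sum whose first step already sits in the concentration set of $\upsilon^*$, and dominated convergence finishes. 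You need this mechanism, not Theorem \ref{0}.

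A smaller point: you describe the Harris recurrence step as requiring BHP \emph{plus} a separate quantitative lower bound on the chance that a single record-to-record excursion reaches a prescribed angular region. In fact BHP alone delivers the full minorization in one stroke. Setting $g(x;E)=\mathbb{E}_x[M(X_{\tau^\ominus_1})\mathbf{1}_{(\arg(X_{\tau^\ominus_1})\in E,\,\tau^\ominus_1<\kappa_\Gamma)}]$, Corollary \ref{BHP} gives $g(x;E)\ge C_1^{-1}M(x)g(x_0;E)$ uniformly over $|x|<1/2$ and $E\in\mathcal{B}(\Omega)$, and since $\mathbb{P}^\triangleleft_\theta(\Xi_1\in E)=g(\theta/{\rm e};E)/M(\theta/{\rm e})$, this is exactly $\mathbb{P}^\triangleleft_\theta(\Xi_1\in E)\ge C_1^{-1}\rho(E)$ with $\rho(E)=\mathbb{P}^\triangleleft_{x_0}(\Xi_1\in E)$. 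No additional irreducibility or regeneration estimate is needed; the constant from BHP is already uniform in the starting angle.
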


The above theorem, although seemingly innocent and intuitively clear,  offers us access to a very important result. In order to understand why, we must take a small diversion into radial excursion theory, as described in Kyprianou et al. \cite{Deep3}.

\smallskip

Theorem  \ref{conditioned} shows that  $(X, \mathbb{P}^\triangleleft)$,  is a self-similar Markov process. As mentioned above, it follows that it has a Lamperti-Kiu representation of the form \eqref{eq:lamperti_kiu}, with an underlying MAP, say 
$(\xi, \Theta)$, with probabilities $\mathbf{P}^\triangleleft_{x,\theta}$, $x\in\mathbb{R}$, $\theta\in\mathbb{S}^{d-1}$.
For each $t>0,$ let $\overline{\xi}_t  = \sup_{u\leq t}\xi_u$ and define 
\[
 \texttt{g}_t =\sup\{s<t:\xi_s =\overline{\xi}_t\} \text{ and } \texttt{d}_t = \inf\{s>t:\xi_s =\overline{\xi}_t\},
 \]
 which code the left and right end points of excursions of $\xi$ from its maximum, respectively.
Then, for all $t>0,$ with $\texttt{d}_t>\texttt{g}_t,$ we define the excursion process

\[
(\epsilon_{\texttt{g}_t}(s),\Theta_{\texttt{g}_t}^{\epsilon}(s)):= (\xi_{\texttt{g}_t+s}-\xi_{\texttt{g}_t},\Theta_{\texttt{g}_t+s}) , \qquad s\leq \zeta_{\texttt{g}_t} :=\texttt{d}_t-\texttt{g}_t;
\]
it codes the excursion of $(\overline\xi-\xi, \Theta)$ from the set $(0,\mathbb{S}^{d-1})$ which straddles time $t$. Such excursions live in the space $\mathbb{U}(\mathbb{R}\times\mathbb{S}^{d-1})$, the space of c\`adl\`ag paths %$(\epsilon, \Theta^\epsilon)$ 
with lifetime  $\zeta = \inf\{s>0: \epsilon(s) <0\}>0$ such that $(\epsilon(0),\Theta^\epsilon(0))\in \{0\}\times\mathbb{S}^{d-1}$, $(\epsilon(s), \Theta^\epsilon(s))\in (0,\infty)\times\mathbb{S}^{d-1}$, for $0<s<\zeta$,  and $\epsilon(\zeta)\in (-\infty,0]$.

\smallskip

For $t>0,$ let $R_t=\texttt{d}_t-t,$ and define the set  $G = \{t>0: R_{t-}=0, R_{t}>0\} = \{{\texttt g}_s: s\geq 0\}$ of the left extrema of excursions from $0$ for $\overline{\xi}-\xi$.  The classical theory of exit systems in Maisonneuve \cite{M75} now implies that there exist an additive functional $(\ell_t, t\geq 0)$ carried by the set of times $\{t\geq0: (\overline{\xi}_t - {\xi}_t, \Theta_t)\in\{0\}\times\mathbb{S}^{d-1}\}$, with a bounded $1$-potential, and a family of {\it excursion measures}, $(\mathbb{N}^\triangleleft_{\theta }, \theta\in\mathbb{S}^{d-1})$, such that  
\begin{itemize}
\item[(i)] $(\mathbb{N}^\triangleleft_{\theta }, \theta\in\mathbb{S}^{d-1})$ is a kernel from $\mathbb{S}^{d-1}$ to $\mathbb{R}\times\mathbb{S}^{d-1},$ such that $\mathbb{N}^\triangleleft_{\theta}(1-{\rm e}^{-\zeta})<\infty$ and $\mathbb{N}^\triangleleft_{\theta}$ is carried by the set $\{(\epsilon(0),\Theta^\epsilon(0)=(0,\theta)\}$ and $\{\zeta>0\}$ for all $\theta\in\mathbb{S}^{d-1};$
\item[(ii)]we have the {\it exit formula} \begin{align}
&\mathbf{E}^\triangleleft_{x,\theta}\left[\sum_{{\texttt g}\in G}F((\xi_s, \Theta_s): s<{\texttt g})H((\epsilon_{{\texttt g}}, \Theta^\epsilon_{\texttt g}))\right]\notag\\
&\hspace{2cm}=\mathbf{E}^\triangleleft_{x,\theta}\left[\int_0^\infty F((\xi_s, \Theta_s): s< t)\mathbb{N}^\triangleleft_{ \Theta_t}(H(\epsilon, \Theta^\epsilon)){\rm d}\ell_t\right],
\label{exitsystem1}
\end{align}
%%%% I THINK THIS SHOULD BE PREVISIBLE F
for $x\neq 0$, where $F$ is continuous on the space of c\`adl\`ag paths $\mathbb{D}(\mathbb{R}\times\mathbb{S}^{d-1})$ and $H$ is measurable on the space of c\`adl\`ag paths $\mathbb{U}(\mathbb{R}\times\mathbb{S}^{d-1});$
\item[(iii)] for any $\theta\in\mathbb{S}^{d-1}$, under the measure $\mathbb{N}^\triangleleft_{\theta},$ the process $((\epsilon(s), \Theta^\epsilon(s)), s<\zeta)$ is Markovian with the same semigroup as $(\xi, \Theta)$ killed at its first hitting time of $(-\infty,0]\times\mathbb{S}^{d-1}.$   
\end{itemize}
The couple $(\ell, (\mathbb{N}^\triangleleft_{\theta}, \theta\in \mathbb{S}^{d-1}))$ is called an exit system. 
In Maisonneuve's original formulation, the pair $\ell$ and the kernel $(\mathbb{N}^\triangleleft_{\theta}, \theta\in\mathbb{S}^{d-1})$ is not unique, but once $\ell$ is chosen, the $(\mathbb{N}^\triangleleft_{\theta}, \theta\in\mathbb{S}^{d-1})$ is determined but for a $\ell$-neglectable set, i.e. a set $\mathcal{A}$ such that 
\[
\mathbf{E}^\triangleleft_{x,\theta}\left[\int_{t\geq 0}\mathbf{1}_{((\overline{\xi}_s-{\xi}_s,\Theta_s)\in\mathcal{A})}{\rm d}\ell_s\right]=0.
\] 
%Since $(|X_t|, t\geq0)$ is a positive self-similar Markov process (cf. Caballero et al. \cite{CPP}), we know that the marginal process $\xi$ is a L\'evy process. In this sense there is a natural choice of the local time $\ell$ which is simply that of the reflected L\'evy process $\overline{\xi} - \xi$.

\smallskip

Let $(\ell^{-1}_t, t\geq 0)$ denote the right continuous inverse of $\ell,$ $H^+_t :=\xi_{\ell^{-1}_t}$ and  $\Theta^+_t = \Theta_{\ell^{-1}_t}$, $t\geq 0$. The strong Markov property tells us that $(\ell^{-1}_t, H^+_t, \Theta^+_t)$, $t\geq 0$, defines a Markov additive process, whose first two elements are ordinates that are non-decreasing.  Rotational invariance of $X$ implies that $\xi$, alone, is also a L\'evy process, then the pair $(\ell^{-1}, H^+)$, without reference to the associated modulator $\Theta^+$, are Markovian and play the role of the ascending ladder time and height subordinators of $\xi$. But here, we are more concerned  with their dependency on $\Theta^+$.

\smallskip

Taking account of the Lamperti--Kiu transform \eqref{eq:lamperti_kiu}, it is natural to consider how the excursion of $(\overline{\xi} - {\xi}, \Theta)$ from $\{0\}\times\mathbb{S}^{d-1}$ translates into a radial excursion theory for the process 
\[
Y_t  : = {\rm e}^{\xi_t}\Theta_t, \qquad t\geq 0.
\]
Ignoring the time change in \eqref{eq:lamperti_kiu}, we see that the radial maxima of the process $Y$ agree with the radial maxima of the stable process $X$.
Indeed, an excursion of $(\overline{\xi} - {\xi}, \Theta)$ from $\{0\}\times\mathbb{S}^{d-1}$ constitutes an excursion of $(Y_t/\sup_{s\leq t}|Y_s|, t\geq 0)$, from $\mathbb{S}^{d-1}$, or equivalently an excursion of $Y$ from its running radial supremum. Moreover, we see that, for  all $t>0$ such that ${\texttt d}_t > {\texttt g}_t$, 
\[
Y_{\texttt{g}_t + s} = {\rm e}^{\xi_{\texttt{g}_t}} {\rm e}^{\epsilon_{\texttt{g}_t}(s)} \Theta^\epsilon_{{\texttt g}_t}(s) = |Y_{\texttt{g}_t}|{\rm e}^{\epsilon_{\texttt{g}_t}(s)} \Theta^\epsilon_{{\texttt g}_t}(s)=:  |Y_{\texttt{g}_t}|\mathcal{E}_{\texttt{g}_t}(s)
, \qquad s\leq \zeta_{{\texttt g}_t}.
\]

Whilst a cluster of papers on the general theory of Markov additive processes exists in the literature from the 1970s and 1980s, see e.g. \c{C}inlar \cite{Cinlar, Cinlar1, Cinlar2} and Kaspi \cite{Kaspi}, as well as in the setting that $\Theta$ is a discrete process, see Asmussen \cite{AsmussenQueue} and Albrecher and Asmussen \cite{AA}, as well as some recent advances, see the Appendix in Dereich at al. \cite{DDK}, relatively little is known about the fluctuations of MAPs in comparison to e.g. L\'evy processes. Note the latter are a degenerate class of MAPs, in the sense that a L\'evy process can be seen as MAP with constant driving process.   

\smallskip

A good example of an open problem pertaining to the fluctuation theory of MAPs is touched upon in Theorem \ref{oo}: Suppose that $\Theta$ has a stationary distribution, under what conditions does $\Theta^+$ have a stationary distribution? This is a question that has been raised in general in Section 4 of the paper~\cite{kaspi-excursions}. Below we give a complete answer in the present setting.

\begin{theorem}\label{ooC}
Under $\mathbf{P}^\triangleleft$, the modulator process $\Theta^{+}$ has a stationary distribution, that is 
 \[
  \pi^{\triangleleft,+}(\d\theta) := \lim_{t\to\infty}\mathbf{P}^\triangleleft_{x,\theta}(\Theta^+_t\in \d \theta), \qquad \theta\in\Omega, x\in\mathbb{R},
  \]
  exists as a non-degenerate distributional weak limit. 
%  Moreover, 
%  \begin{align*}
%&\lim_{\Gamma\ni x\to0}\mathbb{E}^{\triangleleft}_x\left[F(X_{m(\tau_1^{\ominus}-)}, X_{\tau_1^{\ominus}})\right]\\
%&\hspace{2cm}= \frac{1}{\mathbb{E}^\triangleleft_{\pi^{\triangleleft,+}}[H^+_1]}\int_{0}^{\infty}\int_{\Omega}\pi^{\triangleleft,+}({\rm d}\phi) {\rm d} r\, 
%\mathbb{N}^\triangleleft_{\phi}(
%F({\rm e}^{-r}\phi, \, {\rm e}^{-r+\epsilon(\zeta)}\Theta^{\epsilon}(\zeta) )
%;
%\epsilon(\zeta)>r)
%\end{align*}
\end{theorem}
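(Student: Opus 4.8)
The plan is to transfer the question about $\Theta^+$ into a statement about the ordinary (unladdered) modulator $\Theta$ of the MAP driving $(X,\mathbb{P}^\triangleleft)$, for which existence of a stationary distribution is tractable, and then use Markov additive renewal theory together with Theorem \ref{oo} to push the stationarity through the ascending ladder process. First I would recall that, by Theorem \ref{conditioned}, $(X,\mathbb{P}^\triangleleft)$ is self-similar, hence admits a Lamperti--Kiu MAP $(\xi,\Theta)$ with jump kernel $L^\triangleleft_{x,\theta}$ computed in Proposition \ref{jumprate}. The key structural observation is that the $M$-transform only changes the ordinate-dependence of the jump rate through the factor $\mathrm{e}^{\beta(y-x)}M(\phi)/M(\theta)$; the modulator $\Theta$, viewed on its own, is an autonomous $\Omega$-valued Markov process (this is essentially the content of rotational/angular invariance combined with the homogeneity \eqref{scaling} of $M$), whose generator is an integral operator against $M(\phi)L_\theta(\d\phi,\d y)/M(\theta)$ integrated over $y$. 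Using the known sharp two-sided bounds on $M$ on $\Omega$ (locally bounded, bounded away from $0$ on compacts of $\Omega$, boundary Harnack) and the explicit kernel \eqref{L-kernel}, one checks that $\Theta$ is a positive-recurrent (indeed uniformly ergodic, by a Doeblin-type minorisation on $\Omega$) Markov process, so it has a unique stationary distribution $\pi^\triangleleft(\d\theta)$ on $\Omega$.

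Next I would set up the ascending-ladder MAP $(\ell^{-1}_t, H^+_t,\Theta^+_t)$ attached via the exit system $(\ell,(\mathbb{N}^\triangleleft_\theta))$ described in the excerpt, and identify $\Theta^+$ as the modulator sampled at the successive records of $\xi$. The natural route is Markov additive renewal theory in the sense of Alsmeyer \cite{Alsmeyer1994} (as flagged in the abstract): the process $\Theta^+$ is itself a Markov chain/Markov process on $\Omega$ with transition kernel $Q(\theta,\d\phi) = \mathbb{N}^\triangleleft_\theta(\Theta^\epsilon_{\zeta-}\in\d\phi\mid\text{new record})$, and its stationary distribution, if it exists, must satisfy the invariance identity that appears already (in disguise) in Theorem \ref{oo}: the measure $\upsilon^*$ there is exactly an invariant measure for the modulator sampled at radial maxima. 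So the plan is to show $\pi^{\triangleleft,+} = \upsilon^*$. Concretely, I would invoke Theorem \ref{oo} with test functions depending only on $\arg(X_{\tau^\ominus_a})$, which gives precisely that $\upsilon^*$ is a stationary distribution for the Markov chain of angular components at successive radial records; uniqueness and non-degeneracy then follow from the same recurrence/irreducibility of $\Theta$ used in the first step, because the ladder modulator inherits irreducibility on $\Omega$ from the full modulator (every angular state is reachable at a record time, using that excursions of $\xi$ away from its max have positive $\mathbb{N}^\triangleleft_\theta$-mass reaching any relatively compact subset of $\Omega$).

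Finally, to upgrade the stationary distribution to a genuine weak limit $\lim_{t\to\infty}\mathbf{P}^\triangleleft_{x,\theta}(\Theta^+_t\in\d\theta)$, I would apply a Markov renewal theorem (Alsmeyer) to the additive component $H^+$: since $H^+$ is a non-decreasing, non-lattice (it is the ladder height of the rotationally invariant L\'evy process $\xi$, which is not compound Poisson hence aperiodic) additive functional with finite mean increment $\mathbb{E}^\triangleleft_{\pi^{\triangleleft,+}}[H^+_1]<\infty$ under the stationary regime — finiteness is where $\mathbb{E}^\triangleleft_{\upsilon^*}[\log|X_{\tau^\ominus_{\mathrm e}}|]<\infty$ from Theorem \ref{oo} enters — the Markov renewal theorem yields convergence of the modulator at level-$t$ records to the stationary law, independently of the starting point $(x,\theta)$. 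The main obstacle I anticipate is the first step: verifying that the angular process $\Theta$ under $\mathbf{P}^\triangleleft$ is genuinely positive recurrent with a non-degenerate stationary law, i.e. controlling the $M$-weighted jump kernel near $\partial\Omega$ so that mass does not escape to the boundary — this is exactly where the boundary Harnack principle and the sharp estimates on $M$ and on $\mathbb{P}_y(\kappa_\Gamma>1)$ from \cite{BPW} are indispensable, and it is the least routine part of the argument; by contrast, once ergodicity of $\Theta$ (and hence of $\Theta^+$) is in hand, identifying the limit with $\upsilon^*$ and invoking the Markov renewal theorem are comparatively mechanical.
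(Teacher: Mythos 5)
Your overall plan shares the high-level strategy of the paper (deduce the stationary behaviour of the ladder modulator from Markov additive renewal theory fed by the discrete Harris-recurrent chain introduced for Theorem \ref{oo}), but two parts of it go astray, one of them in a way that would lead to a wrong conclusion.

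The substantive error is your claimed identification $\pi^{\triangleleft,+} = \upsilon^*$. The measure $\upsilon^*$ is the stationary distribution of the \emph{discrete} chain $\Xi_n = \Theta^+_{\chi_n}$, where the $\chi_n$ are the successive ladder times at which $H^+$ increments by $1$; equivalently, $\Xi_n$ is the angle at the $n$-th time the radial record multiplies by ${\rm e}$. The object you need, $\pi^{\triangleleft,+}$, is the weak limit of $\Theta^+_t$ in the continuous ladder clock $t$, and the stationary law of a continuous-time Markov additive process is \emph{not} the stationary law of an embedded jump chain sampled at renewal epochs: it is that law length-biased by the residence time between renewals. Concretely the paper shows
\[
\pi^{\triangleleft,+}(\d\theta) = \frac{\int_\Omega \upsilon^*(\d\phi)\, U^\triangleleft_\phi([0,1),\d\theta)}{\int_\Omega \upsilon^*(\d\phi)\, U^\triangleleft_\phi([0,1),\Omega)},
\]
where $U^\triangleleft_\phi(\d x,\d\theta)$ is the ascending ladder potential, and this is genuinely different from $\upsilon^*$. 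The correct route, taken in the paper, is to write $\mathbf{E}^\triangleleft_{0,\theta}[f(\Theta^+_t)]$ as a Markov renewal convolution of the $\chi$-renewal measure $\mathscr{U}^\triangleleft_\theta$ against the ``age'' function $F(s,\phi)=\mathbf{E}^\triangleleft_{0,\phi}[\mathbf{1}_{(s\le\chi_1)}f(\Theta^+_s)]$, and only then apply the renewal theorem; the residual-time integral is exactly what produces the $U^\triangleleft_\phi([0,1),\cdot)$ factor. Your proposal skips this step, and that skip is where the erroneous identification arises.

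The second issue is structural rather than fatal: your first step, proving that the full (unladdered) modulator $\Theta$ is positive recurrent under $\mathbf{P}^\triangleleft$, is not used in the paper's proof and is a detour. The paper works directly from the Harris recurrence of the discrete chain $(\Xi_n)$, which was already established in the proof of Theorem \ref{oo} via the boundary Harnack principle and is all that is needed to run the Markov additive renewal theorem. Establishing ergodicity of $\Theta$ itself would be independent extra work (and, as the authors note in the abstract, the passage from stationarity of $\Theta$ to stationarity of $\Theta^+$ is an open question in general, so it is not a ``comparatively mechanical'' inheritance as you suggest). Your final paragraph about applying a Markov renewal theorem and about finiteness of the mean increment coming from $\mathbb{E}^\triangleleft_{\upsilon^*}[\log|X_{\tau^\ominus_{\rm e}}|]<\infty$ is in the right spirit, but you should apply it to the chain $(\chi_n,\Xi_n)$, with finite mean $\mathbf{E}^\triangleleft_{0,\upsilon^*}[\chi_1]=\int_\Omega\upsilon^*(\d\phi)U^\triangleleft_\phi([0,1),\Omega)<\infty$, rather than to $H^+$ directly.
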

\smallskip

\begin{remark}\label{pigamma}\rm
The reader will also note that, thanks to the change of measure 
 \eqref{otherCOM} combined with  the fact that $\ell^{-1}_t$ is an almost surely finite stopping time and Theorem III.3.4  of \cite{JS} , the ascending ladder MAP process $(H^+,\Theta^+),$ under $\mathbf{P}^\triangleleft$, has the property 
\begin{equation}
\label{plusCOM}
\left.\frac{\d \mathbf{P}^\triangleleft_{0,\theta}}{\d \mathbf{P}_{0,\theta}}\right|_{\sigma((H_s^+, \Theta^+_s), s\leq t)} = 
{\rm e}^{\beta H^+_t}\frac{M(\Theta^+_t)}{M(\theta)}\mathbf{1}_{(t<
\texttt{k}^{\Omega, +}
)}, \qquad t\geq 0,
\end{equation}
where $\texttt{k}^{\Omega,+} = \inf\{t>0: \Theta_t^+ \not\in\Omega\}$
As a consequence of the existence of $\pi^{\triangleleft,+}$, we note that 
\[
\pi^{\Gamma,+}(\d \theta) := \frac{1}{M(\theta)}\pi^{\triangleleft,+}(\d\theta), \qquad \theta\in\Omega,
\]
is an invariant distribution for $(\Theta^+_t\mathbf{1}_{(t<
\texttt{k}^{\Omega, +})}, t\geq 0)$ under $\mathbf{P}$.
\end{remark}

\section{Auxiliary results for dual processes in the cone}

In order to prove some of the results listed above, 
we will need to understand another type of conditioned process, namely the stable process conditioned to continuously absorb at the origin.

\begin{theorem}\label{conditionapex}
{\color{black} For $A\in \mathcal{F}_t$, on the space of c\`adl\`ag paths in $\Gamma$ with a cemetery state at the apex of $\Gamma$, 
$$ \mathbb{P}^{\triangleright}_x(A, \, t<\kappa_{\{0\}} ): =\lim_{a\to0} \mathbb{P}^\triangleleft_x(A,\, t< \tau^\oplus_a|\tau^\oplus_a <\infty), $$
 is well defined as a stochastic process which is continuously absorbed at the apex of $\Gamma$, where $\kappa_{\{0\}} = \inf\{t>0: |X_t| =0\}$ and $\tau^\oplus_a =\inf\{s>0:|X_s|<a\}$. 
%and $\emph{\texttt{k}} = \inf\{t>0: X_t = \partial\}$ is the absorption time. 
Moreover, for $A\in\mathcal{F}_t$, 
\begin{equation}
\mathbb{P}^{\triangleright}_x(A, \, t<\kappa_{\{0\}})
=\mathbb{E}_x\left[
\mathbf{1}_{(A, \, t<\kappa_\Gamma)}\frac{H(X_t)}{H(x)}\right], \qquad t\geq 0,
\label{COMH}
\end{equation}
where 
\[
H(x) = %|x|^{\alpha -d} M(x/|x|^2) = 
|x|^{\alpha-\beta-d}M(\arg(x)).
\]
}
Next, we write $\mathbf{P}^\triangleright:=(\mathbf{P}^\triangleright_{x,\theta}, x\in\mathbb{R}, \theta\in\Omega)$ for the probability law of the MAP that underlies $(X,\mathbb{P}^\triangleright)$. For any positive predictable process $(G_t,\ t\geq 0),$ and any function $f:(0,\infty)\times\mathbb{R}^2\times\mathbb{S}^{d-1}\times\mathbb{S}^{d-1}\to\mathbb{R}$, bounded and measurable, such that $f(\cdot, \cdot, 0,\cdot, \cdot) = 0$, one has
\begin{equation}
\begin{split}
&\mathbf{E}^\triangleright_{0,\theta}\left(\sum_{s>0} G_sf(s,\xi_{s-},\Delta\xi_{s},\Theta_{s-}, \Theta_s)\right)\\
&=\mathbf{E}^\triangleright_{0,\theta}\left(\int^{\infty}_0\d s G_s \int_{\mathbb{S}^{d-1}}\int_{\mathbb{R}}L^\triangleright_{\Theta_s}(\d\phi, \d y)f(s, \xi_s, y,\Theta_s, \phi)\right), \quad \forall \theta\in \mathbb{S}^{d-1},
\end{split}
\end{equation}
where $$L^\triangleright_{ \theta}(\d\phi, \d y):={\rm e}^{\beta y}\frac{H(\phi)}{H(\theta)}L_{\theta}(\d\phi, \d y),\qquad \phi\in\mathbb{S}^{d-1}, y \in \mathbb{R}.$$ That is to say, that under $\mathbf{P}^\triangleright,$  the instantaneous jump rate when $(\xi_t, \Theta_t) = (x,\vartheta)$ is 
\[
c(\alpha)\frac{{\rm e}^{(\beta + d) y}}{|{\rm e}^{y}\phi-   \theta|^{\alpha+d}}\frac{H(\phi)}{H(\vartheta)}\d y\sigma_1(\d\phi)\d t, \qquad t> 0, \theta, \phi\in\Omega
\]

% then 
%for positive, bounded measurable function on $[0,\infty)\times\mathbb{R}\times\mathbb{S}^{d-1}\times\mathbb{S}^{d-1}$ such that $f(\cdot,0,\cdot, \cdot) = 0$,
%\begin{align}
%\mathbf{E}^\triangleright_{0,\theta}&\left(\sum_{s>0} {\rm e}^{-qs}f(\Delta\xi_{s},\Theta_{s-}, \Theta_s)\right)\notag\\
%&\hspace{1cm}=\int_{\mathbb{S}^{d-1}}\int_{\mathbb{S}^{d-1}}\int_{\mathbb{R}}\d\sigma_1(\phi)  \,  {_{q}}V^\triangleright_\theta(\d \vartheta)\d y\frac{c(\alpha){\rm e}^{y(d+\beta)}}{|{\rm e}^{y}\phi-   \vartheta|^{\alpha+d}}\frac{H(\phi)}{H(\vartheta)}f(y,\vartheta, \phi),
%\label{compensation}
%\end{align}
%where, recalling the relation between $\Omega$ and $\Gamma$ was given in \eqref{GammaOmega},
%\begin{align*}
%{_q}V^\triangleright_\theta(\d \vartheta) &=\int_{0}^\infty{\rm e}^{-qt}{\rm e}^{\beta y} \frac{H(\vartheta)}{H(\theta)}\mathbf{P}_{0,\theta}(\Theta_t\in \d\vartheta, \,\xi_t\in \d y \,t<\emph{\texttt{k}}^\Omega)\d t \\
%&=\int_{0}^\infty {\rm e}^{-qt}\mathbf{P}^\triangleright_{0,\theta}(\Theta_t\in \d\vartheta)\d t, \qquad \theta, \vartheta\in\Omega,
%\end{align*}
%where $\emph{\texttt{k}}^\Omega = \inf\{t>0: \Theta_t \not\in\Omega\}$.
%That is to say the instantaneous jump rate when $(\xi_t, \Theta_t) = (x,\vartheta)$ is 
%\[
%c(\alpha)\frac{{\rm e}^{y(d+\beta)}}{|{\rm e}^{y}\phi-   \theta|^{\alpha+d}}\frac{H(\phi)}{H(\vartheta)}\d y\sigma_1(\d\phi)\d t, \qquad t\geq 0, \theta, \phi\in\Omega
%\]
\end{theorem}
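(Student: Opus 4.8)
\textbf{Proof plan for Theorem \ref{conditionapex}.}

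The plan is to establish the three assertions of the theorem in sequence, each time exploiting the parallel structure with the conditioning-to-stay-in-$\Gamma$ construction of Theorem \ref{conditioned} together with the scaling property \eqref{1/a}. First I would verify that $H$ is well defined and has the claimed homogeneity. By Theorem \ref{conditioned}, $M$ is harmonic in the sense of \eqref{martingale} and homogeneous of degree $\beta\in(0,\alpha)$, so the Kelvin-type transform $x\mapsto x/|x|^2$ applied to $M$ produces $H(x)=|x|^{\alpha-d}M(x/|x|^2)=|x|^{\alpha-\beta-d}M(\arg(x))$; this function is homogeneous of degree $\alpha-\beta-d$. The point to check is that $H$ is \emph{invariant} (rather than merely excessive) for $(X,\mathbb{P}^{\Gamma})$ stopped upon hitting the apex, equivalently that $\mathbf{1}_{(t<\kappa_\Gamma)}H(X_t)/H(x)$ is a true $\mathbb{P}_x$-martingale for $x\in\Gamma$. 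This is the Riesz--Bogdan--\.Zak transform referenced in the introduction (cf.\ \cite{BZ, Deep1, ACGZ}): under the inversion $x\mapsto x/|x|^2$ combined with an appropriate time change, $(X,\mathbb{P})$ killed on exiting $\Gamma$ is mapped to a time-change of itself, and harmonicity of $M$ transports to harmonicity of $H$ for the transformed (killed) process. I would run this argument to conclude that \eqref{COMH} defines a consistent family of probabilities $\mathbb{P}^{\triangleright}_x$, $x\in\Gamma$.

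Next I would identify the limit in the display defining $\mathbb{P}^{\triangleright}_x$ with the $H$-transform \eqref{COMH}. For fixed $x\in\Gamma$ and $A\in\mathcal{F}_t$, apply the Markov property at time $t$ to write $\mathbb{P}_x(A,\,t<\kappa_\Gamma\wedge\tau^\oplus_a\mid \tau^\oplus_a<\kappa_\Gamma)$ as $\mathbb{E}_x[\mathbf{1}_{(A,\,t<\kappa_\Gamma)}\,\mathbb{P}_{X_t}(\tau^\oplus_a<\kappa_\Gamma)]/\mathbb{P}_x(\tau^\oplus_a<\kappa_\Gamma)$ (the condition $t<\tau^\oplus_a$ becomes negligible in the limit $a\to0$). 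The key estimate is then the asymptotics of the ratio $\mathbb{P}_z(\tau^\oplus_a<\kappa_\Gamma)/\mathbb{P}_x(\tau^\oplus_a<\kappa_\Gamma)$ as $a\to0$, for $z,x\in\Gamma$. By the scaling property \eqref{1/a}, $\mathbb{P}_z(\tau^\oplus_a<\kappa_\Gamma)=\mathbb{P}_{z/a}(\tau^\oplus_1<\kappa_\Gamma)$, so this is really a statement about the behaviour of $w\mapsto\mathbb{P}_w(\tau^\oplus_1<\kappa_\Gamma)$ as $|w|\to\infty$ inside $\Gamma$. I would use the boundary Harnack principle for $\Gamma$ together with the Kelvin transform (which carries the event $\{\tau^\oplus_1<\kappa_\Gamma\}$, "reach a small ball near the apex before exiting $\Gamma$", to the event "reach a large radius before exiting", dual to the tail of $\kappa_\Gamma$ governed by Proposition \ref{BPWprop}) to show $\mathbb{P}_w(\tau^\oplus_1<\kappa_\Gamma)\sim c\,H(w)$ as $|w|\to\infty$ for a constant $c>0$. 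Dividing, the constant cancels and one obtains $H(X_t)/H(x)$, giving \eqref{COMH}; the continuous absorption at $0$ and the identity $\texttt{k}^{\{0\}}=\inf\{t>0:|X_t|=0\}$ follow because $H$ vanishes at $0$ appropriately and the $H$-transformed process is forced towards the apex.

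Finally, for the statement about the MAP $\mathbf{P}^{\triangleright}$, I would transfer the change of measure \eqref{COMH} through the Lamperti--Kiu transform \eqref{eq:lamperti_kiu}. Writing $X_t=\exp\{\xi_{\varphi(t)}\}\Theta_{\varphi(t)}$, the factor $H(X_t)/H(x)$ becomes ${\rm e}^{(\alpha-\beta-d)\xi_{\varphi(t)}}M(\Theta_{\varphi(t)})/({\rm e}^{(\alpha-\beta-d)\xi_0}M(\Theta_0))$ by the homogeneity of $H$; after the time change the corresponding change of measure on the MAP path space up to the killing time of $\xi$ acquires the Radon--Nikodym density ${\rm e}^{(\alpha-\beta-d)(\xi_t-\xi_0)}M(\Theta_t)/M(\Theta_0)$ — note this differs from the exponent appearing in \eqref{MCOM}/Proposition \ref{jumprate} precisely by the Kelvin shift, but the standard manipulation used in the proof of Proposition \ref{jumprate} (Esscher-type change of measure on a pure-jump MAP, re-expressing the compensator via the L\'evy system \eqref{freejump}) applies verbatim. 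Plugging the L\'evy kernel \eqref{L-kernel} into \eqref{freejump} and weighting by the density yields the new kernel $L^{\triangleright}_{x,\theta}(\d\phi,\d y)={\rm e}^{\beta(y-x)}\tfrac{H(\phi)}{H(\theta)}L_\theta(\d\phi,\d y)$; the stated instantaneous jump rate is then read off from \eqref{L-kernel}. The main obstacle I anticipate is the second step: establishing the precise asymptotic $\mathbb{P}_w(\tau^\oplus_1<\kappa_\Gamma)\sim c\,H(w)$ as $|w|\to\infty$ with a genuinely constant (direction-independent up to the harmonic profile) prefactor — this is where the interplay of the Kelvin transform, the boundary Harnack principle, and the results of \cite{BB, BPW} must be orchestrated carefully, and where one must rule out oscillation of the ratio.
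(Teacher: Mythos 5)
Your plan correctly identifies the overall architecture: reduce everything to the asymptotics of $\mathbb{P}_w(\tau^\oplus_1<\kappa_\Gamma)$ as $|w|\to\infty$ inside $\Gamma$, pass the $h$-transform through the Lamperti--Kiu representation, and read off the L\'evy system by the same Esscher-type manipulation as in the proof of Proposition \ref{jumprate}. This is indeed the paper's route, and you even flag the right obstacle — that one must establish a genuine limit, not just a ratio bounded above and below.

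The gap is that you never actually close that obstacle, and the tool you gesture at would not do so. The boundary Harnack principle (Lemma \ref{BHP} / Corollary \ref{BHP}) gives only two-sided bounds $C^{-1}H(w)\le\mathbb{P}_w(\tau^\oplus_1<\kappa_\Gamma)\le C\,H(w)$; it cannot by itself rule out oscillation of the ratio as $|w|\to\infty$. The paper's resolution is Lemma \ref{decay}, whose proof first applies the Riesz--Bogdan--\.Zak transform (Theorem \ref{RBSthrm}) to rewrite
\[
\mathbb{P}_x(\tau^\oplus_a<\kappa_\Gamma)=M(\arg x)\,(|x|/a)^{\alpha-\beta-d}\,\mathbb{E}^\triangleleft_{aKx}\!\left[\frac{|X_{\tau^\ominus_1}|^{\alpha-d-\beta}}{M(\arg X_{\tau^\ominus_1})}\right],
\]
and then invokes Theorem \ref{oo} — the convergence of the law of the first-exit triple under $\mathbb{P}^\triangleleft_x$ as $\Gamma\ni x\to 0$, which is the output of the entire Markov additive renewal machinery of Section \ref{proveoo} (Harris recurrence of the discrete ladder modulator, existence of $\upsilon^*$, Alsmeyer's Markov renewal theorem). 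Your proposal does not mention Theorem \ref{oo} at all, and without it (or an equivalent renewal-theoretic input) the claimed asymptotic $\mathbb{P}_w(\tau^\oplus_1<\kappa_\Gamma)\sim c\,H(w)$ is not established. Two smaller omissions: the paper does not deduce the martingale property of $\mathbf{1}_{(t<\kappa_\Gamma)}H(X_t)/H(x)$ first and then identify the limit — rather, \eqref{COMH} is obtained directly by dominated convergence in \eqref{useDCT}, which in turn requires the uniform bound \eqref{bound} and the finiteness $\mathbb{E}_x[|X_t|^{\alpha-d-\beta}\mathbf{1}_{(t<\kappa_\Gamma)}]<\infty$, proved via the sharp two-sided heat kernel estimate for $p^\Gamma_t$ from \cite{BPW}; and the continuous absorption at $0$ is again proved by combining the Riesz--Bogdan--\.Zak transform with Theorem \ref{oo}, not by a soft ``forced towards the apex'' argument.
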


As alluded to, the process $(X, \mathbb{P}^{\triangleright})$,  is intimately related to the process $(X, \mathbb{P}^\triangleleft)$. This is made clear in our final main result which has the flavour of the Riesz--Bogdan--\.Zak transform; cf. Bogdan and \.Zak \cite{BZ}. For the sake of reflection it is worth stating the  Riesz--Bogdan--\.Zak transform immediately below first, recalling the definition of $L$-time and then our first main result in this section.

\begin{theorem}[Riesz--Bogdan--\.Zak transform]\label{RBSthrm}
Suppose we write $Kx = x/|x|^2$, $x\in\mathbb{R}^d$ for the classical inversion of space through the sphere $\mathbb{S}^{d-1}$. Then, in dimension $d\geq 2$,   for $x\neq 0$, $(KX_{\eta(t)}, t\geq 0)$ under $
\mathbb{P}_{Kx}$ is equal in law to $(X_t, t\geq 0)$ under $\mathbb{P}^\circ_{x}$, where
\begin{equation}
\left.\frac{\d \mathbb{P}^\circ_x}{\d \mathbb{P}_x}\right|_{\sigma(X_s: s\leq t)} = \frac{|X_t|^{\alpha-d}}{|x|^{\alpha-d}}, \qquad t\geq 0
\label{COM}
\end{equation}
and $\eta(t) = \inf\{s>0: \int_0^s|X_u|^{-2\alpha}\d u>t\}$. 
\end{theorem}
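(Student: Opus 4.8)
The plan is to compute, via the Lamperti--Kiu correspondence \eqref{eq:lamperti_kiu}, the Markov additive processes driving the two self-similar Markov processes in the statement and to show that they coincide; bijectivity of \eqref{eq:lamperti_kiu} then yields the claimed identity in law (of processes with their, possibly finite, lifetimes). As a preliminary I would record that in dimension $d\geq 2$ --- where $\{0\}$ is polar for $X$, since $\alpha<2\leq d$ --- the function $x\mapsto|x|^{\alpha-d}$ is invariant for $(X,\mathbb{P})$ (Bogdan and \.Zak \cite{BZ}); equivalently, writing $(\xi,\Theta)$ for the MAP underlying $(X,\mathbb{P})$, the exponential ${\rm e}^{(\alpha-d)\xi_t}$ is a $\mathbf{P}$-martingale, a fact one can also read off directly from the L\'evy system \eqref{L-kernel} by a compensated computation using the substitution $y\mapsto-y$ together with the identity $|{\rm e}^{y}\phi-\vartheta|=|{\rm e}^{y}\vartheta-\phi|$ for $\phi,\vartheta\in\mathbb{S}^{d-1}$ (both squares equal ${\rm e}^{2y}-2{\rm e}^{y}\langle\phi,\vartheta\rangle+1$). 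This makes \eqref{COM} a consistent family, and since $|x|^{\alpha-d}$ is homogeneous of degree $\alpha-d$ and rotation invariant, the same reasoning as for Proposition \ref{jumprate} gives that $(X,\mathbb{P}^{\circ})$ is self-similar of index $1/\alpha$, that its underlying MAP $(\xi^{\circ},\Theta^{\circ})$ is the Doob $h$-transform of $(\xi,\Theta)$ along the martingale ${\rm e}^{(\alpha-d)(\xi_t-\xi_0)}$, and that this transform merely reweights the kernel \eqref{L-kernel}, so that the jump rate of $(\xi^{\circ},\Theta^{\circ})$ at modulator state $\vartheta$ becomes $c(\alpha)\,{\rm e}^{\alpha y}\,|{\rm e}^{y}\phi-\vartheta|^{-\alpha-d}\,{\rm d}y\,\sigma_1({\rm d}\phi)$.

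Next I would identify the law of $t\mapsto KX_{\eta(t)}$ under $\mathbb{P}_{Kx}$. Representing $X$ under $\mathbb{P}_{Kx}$ through \eqref{eq:lamperti_kiu} with MAP $(\xi,\Theta)$ started from $(\log|Kx|,\arg(Kx))=(-\log|x|,\arg x)$, and using $|Kz|=|z|^{-1}$, $\arg(Kz)=\arg z$, the inversion $K$ acts on the log--polar coordinates simply by flipping the ordinate, so $KX_t={\rm e}^{-\xi_{\varphi(t)}}\Theta_{\varphi(t)}$. The crucial point is that $\eta$ is tailored precisely to this flip: the substitution $u=\int_0^v {\rm e}^{\alpha\xi_r}{\rm d}r$ gives $\int_0^s|X_u|^{-2\alpha}\,{\rm d}u=\int_0^{\varphi(s)}{\rm e}^{-\alpha\xi_v}\,{\rm d}v$, whence $\varphi(\eta(t))=\inf\{v>0:\int_0^v{\rm e}^{-\alpha\xi_r}{\rm d}r>t\}$ is exactly the Lamperti clock \eqref{varphi} of the MAP $(-\xi,\Theta)$. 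Hence $t\mapsto KX_{\eta(t)}$ under $\mathbb{P}_{Kx}$ is nothing but the self-similar Markov process with Lamperti--Kiu MAP $(-\xi,\Theta)$, issued from $x$.

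It then remains to check that the MAPs $(-\xi,\Theta)$ and $(\xi^{\circ},\Theta^{\circ})$ have the same law. Both are pure--jump, driftless MAPs of the stable type (the ordinate jumps only simultaneously with the modulator, no Gaussian part, Lebesgue time scale), so by the uniqueness theory for MAPs (see e.g.\ \c{C}inlar \cite{Cinlar2}) it suffices to match L\'evy systems. Flipping the sign of the ordinate in \eqref{L-kernel} (replace $y$ by $-y$) and using $|{\rm e}^{-y}\phi-\vartheta|={\rm e}^{-y}|{\rm e}^{y}\vartheta-\phi|$ yields the jump rate of $(-\xi,\Theta)$ as $c(\alpha)\,{\rm e}^{\alpha y}\,|{\rm e}^{y}\vartheta-\phi|^{-\alpha-d}\,{\rm d}y\,\sigma_1({\rm d}\phi)$; the identity $|{\rm e}^{y}\vartheta-\phi|=|{\rm e}^{y}\phi-\vartheta|$ on $\mathbb{S}^{d-1}$ shows this equals the jump rate of $(\xi^{\circ},\Theta^{\circ})$ found above. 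Thus $(-\xi,\Theta)\eqd(\xi^{\circ},\Theta^{\circ})$, the two self-similar Markov processes share the same Lamperti--Kiu MAP, and they are equal in law.

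I expect the main obstacle to be the preliminary step: rigorously establishing that \eqref{COM} defines a consistent family, i.e.\ the martingale property of $|X_t|^{\alpha-d}$. Because $|x|^{\alpha-d}\to\infty$ as $x\to0$, this is not a boundedness argument; it relies on polarity of $\{0\}$ in $d\geq 2$ and on the precise cancellation in the L\'evy measure of $X$ (equivalently, the vanishing of the exponential compensator of \eqref{L-kernel}). Once this is granted, the self-similarity of $(X,\mathbb{P}^{\circ})$, the $h$-transform reweighting of the jump kernel, and the change-of-variables bookkeeping for $\eta$ are all routine. A useful independent check is at the level of potentials: using $G(x,y)=c_{d,\alpha}|x-y|^{\alpha-d}$ for the Green function of $X$ and the Kelvin identity $|Kx-Ky|=|x-y|/(|x|\,|y|)$ one verifies directly that $t\mapsto KX_{\eta(t)}$ under $\mathbb{P}_{Kx}$ and $(X,\mathbb{P}^{\circ}_x)$ have the same Green operator, which is essentially the route taken in \cite{BZ}.
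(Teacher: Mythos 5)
The paper does not supply a proof of Theorem~\ref{RBSthrm}: it is quoted verbatim as a known result, and the references \cite{BZ}, \cite{Deep1}, \cite{ACGZ} are invoked at the point of use (see the third equality in the proof of Theorem~\ref{BZcone}(i)). Your proposal therefore cannot be ``compared to the paper's proof''; rather it reconstructs one of the known arguments. The route you take --- decompose both sides into Lamperti--Kiu MAPs and match them --- is the one in Kyprianou \cite{ALEAKyp}/\cite{Deep1} and Alili et al.~\cite{ACGZ}, not the original potential-theoretic/Kelvin-transform argument of Bogdan and \.Zak~\cite{BZ} which works directly with killed transition densities and Green functions. Your identification of the time change ($\varphi\circ\eta$ is the Lamperti clock of $(-\xi,\Theta)$, via the same differentiation as \eqref{repeatinddim}) and the $h$-transform reweighting of the jump kernel \eqref{L-kernel} are both correct, and the symmetry $\lvert{\rm e}^{y}\phi-\vartheta\rvert=\lvert{\rm e}^{y}\vartheta-\phi\rvert$ for unit vectors is exactly the right observation to align the two jump rates.

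There is, however, a genuine gap in the step ``thus $(-\xi,\Theta)\eqd(\xi^{\circ},\Theta^{\circ})$''. Matching L\'evy systems identifies the compensator of the jump measure, but for a L\'evy ordinate of unbounded variation (which is the case here whenever $\alpha\geq1$) that does not determine the law: the triplet also contains a truncated drift $b$, and an exponential Esscher/Doob change of measure of a pure-jump process does in general produce a nonzero shift of $b$, even when the original $b$ vanishes. In your notation, the Esscher transform at $\gamma=\alpha-d$ yields drift $b^{\circ}=b+\int_{|y|<1}y({\rm e}^{\gamma y}-1)\,\nu(\d y)$, while the sign flip yields $b^{-}=-b$; the fact that $\nu(-\d y)={\rm e}^{\gamma y}\nu(\d y)$ makes the additional integral equal to $-2\int_{|y|<1}y\,\nu(\d y)$, so equality $b^{\circ}=b^{-}$ reduces to a nontrivial identity $b=\int_{|y|<1}y\,\nu(\d y)$ for the stable MAP. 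This is true, but it does not follow from the jump-rate computation alone. You can close the gap either by invoking the explicit Laplace exponent of the ordinate (of the Caballero--Pardo--P\'erez/hypergeometric type), which satisfies $\psi(z)=\psi(\alpha-d-z)$ and hence directly gives $\psi^{\circ}(\lambda)=\psi(\lambda-{\rm i}(\alpha-d))=\psi(-\lambda)$; or by elevating your ``independent check'' from a remark to the main argument, namely, verify equality of one-dimensional transition densities via the Kelvin identity $\lvert Kx-Ky\rvert=\lvert x-y\rvert/(\lvert x\rvert\,\lvert y\rvert)$ and the change of variables for $\eta$, which is essentially the Bogdan--\.Zak route.

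Two smaller remarks. First, the assertion that both MAPs are ``driftless'' is precisely the point at issue and should not be taken as given; phrase it as a claim to be verified rather than as an input. Second, your appeal to harmonicity of $\lvert x\rvert^{\alpha-d}$ already leans on \cite{BZ} (or the classical theory of the Riesz kernel for $(-\Delta)^{\alpha/2}$); if you are willing to cite \cite{BZ} for that, the economical thing is to cite the theorem itself, as the paper does.
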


Hereafter, by an $L$-time we mean the following. Suppose that $\mathcal{G}$ is the sigma-algebra generated by $X$ and  write $\mathcal{G}(\mathbb{P}^\triangleleft_\nu)$ for its completion by the null sets of $\mathbb{P}^\triangleleft_\nu$, where $\nu$ is a randomised initial distribution. Moreover, write $\overline{\mathcal G} =\bigcap_{\nu} \mathcal{G}(\mathbb{P}^\triangleleft_\nu)$, where the intersection is taken over all probability measures on the state space of $X$.	 A finite  random time $\texttt{k}$ is called an $L$-time (generalized last exit time) if
$\{s<\texttt{k}(\omega)-t\}=\{s<\texttt{k}(\omega_t)\}$ for all $t,s\geq 0$.
(Normally, we must include in the definition of an $L$-time that $\texttt{k}\leq \zeta$,
where $\zeta$ is the first entry of the process to a cemetery state. However, this is not applicable for $(X, \mathbb{P}^\triangleleft)$.)
 The two most important examples of $L$-times are killing times and last  exit times. 

\begin{theorem}\label{BZcone}
Consider again the transformation of space via the sphere inversion $Kx = x/|x|^2$, $x\in\mathbb{R}^d$. 
\begin{itemize}
\item[(i)] The process $(KX_{\eta(t)}, t\geq 0)$ under $\mathbb{P}^\triangleleft_x$, $x\in\Gamma$, is equal in law to $(X_t, t<\kappa_{\{0\}})$ under $\mathbb{P}^\triangleright_x$, $x\in\Gamma$, where
\begin{equation}
\eta(t) =\inf\{s>0: \int_0^s |X_u|^{-2\alpha}\d u>t\}, \qquad t\geq 0.
\label{etatimechange}
\end{equation}
and $\kappa_{\{0\}} = \inf\{t>0 : X_t =0\}$.
\item[(ii)] Under $\mathbb{P}_0^\triangleleft$, the time reversed process 
\[
{\stackrel{_\leftarrow}{X}}_t : = X_{(\emph{\texttt{k}}-t)-}, \qquad t\leq \emph{\texttt{k}},
\]
is a homogenous strong Markov process whose transitions agree with those of $(X, \mathbb{P}^\triangleright_x)$, $x\in\Gamma$, where $\emph{\texttt{k}}$ is 
an $L$-time of $(X, \mathbb{P}^\triangleleft_x)$, $x\in\Gamma\cup\{0\}$.
\end{itemize}
\end{theorem}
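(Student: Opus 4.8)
The plan is to deduce both parts from the Riesz--Bogdan--\.Zak transform (Theorem \ref{RBSthrm}) together with the two $h$-transforms \eqref{MCOM} and \eqref{COMH}, by a careful bookkeeping of how the Doob transforms interact with the Kelvin inversion $K$ and the time change $\eta$. For part (i), the starting point is the observation that the harmonic functions $M$ and $H$ are related precisely through the Kelvin transform: $H(x) = |x|^{\alpha-d}M(Kx)$. First I would apply Theorem \ref{RBSthrm} to identify $(KX_{\eta(t)}, t\geq 0)$ under $\mathbb{P}_{Kx}$ with $(X_t, t\geq 0)$ under $\mathbb{P}^\circ_x$, where $\mathbb{P}^\circ$ is the $|X_t|^{\alpha-d}$-transform of $\mathbb{P}$. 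Next I would restrict attention to paths that stay in $\Gamma$: since $K$ maps $\Gamma$ to $\Gamma$ (the cone is invariant under radial inversion because $\arg(Kx)=\arg(x)$), the event $\{t<\kappa_\Gamma\}$ is preserved under the spatial transformation, up to the time change. The key algebraic step is then to check that performing the $M$-transform \emph{before} applying $(K,\eta)$ is the same as performing the $H$-transform \emph{after}: concretely, the Radon--Nikodym density $\mathbf{1}_{(t<\kappa_\Gamma)}M(X_t)/M(x)$ for $\mathbb{P}^\triangleleft_x$, when pushed through the map $x\mapsto Kx$, $X\mapsto KX_{\eta(\cdot)}$, transforms into $\mathbf{1}_{(t<\kappa_\Gamma\wedge\texttt{k}^{\{0\}})}H(X_t)/H(x)$ composed with the density $|X_t|^{\alpha-d}/|x|^{\alpha-d}$ from \eqref{COM}, and these densities multiply correctly because $H(x)/|x|^{\alpha-d} = M(Kx)$. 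This is the computational heart of part (i) and I expect it to be essentially a one-line cancellation once the bookkeeping is set up, using the homogeneity \eqref{scaling} of $M$ and the scaling property \eqref{1/a}.

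The subtlety in part (i) is the appearance of the absorption time $\texttt{k}^{\{0\}}$ on the right-hand side and the behaviour of the time change $\eta$ at infinity. Under $\mathbb{P}^\triangleleft_x$ the process never approaches the origin (Theorem \ref{0} says $0$ is not returned to), equivalently the underlying MAP ordinate $\xi$ does not drift to $-\infty$ in a way that would make $I_\infty$ finite for the inverted process; under the inversion this translates into $X$ under $\mathbb{P}^\triangleright_x$ being absorbed continuously at $0$ in finite time, which matches Theorem \ref{conditionapex}. So I would verify that $\eta(\infty) = \int_0^\infty |X_u|^{-2\alpha}\d u$ is almost surely equal to $\texttt{k}^{\{0\}}$ under $\mathbb{P}^\triangleright_x$ --- this is exactly the Lamperti--Kiu time change $I_\infty$ for the inverted ssMp, so it follows from the representation \eqref{eq:lamperti_kiu} applied to $(X,\mathbb{P}^\triangleright)$ --- and that up to this time the laws agree. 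The identity of laws for all finite-dimensional distributions (hence on the Skorokhod space, both processes being ssMps hence Feller by Theorems \ref{conditioned}, \ref{conditionapex}) then gives (i).

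For part (ii), I would invoke Hunt--Nagasawa duality. The process $(X,\mathbb{P}^\triangleright)$ is the stable process in the cone $h$-transformed by $H$, and $(X,\mathbb{P}^\triangleleft)$ is the same underlying killed process $h$-transformed by $M$; the two $h$-functions $M$ and $H$ are in duality with respect to the potential $u_\Gamma$ of the killed stable process --- indeed $M$ is the (up to constants unique) harmonic function built from $\lim u_\Gamma(x,y)/u_\Gamma(x,y_0)$ as $|x|\to\infty$, while $H$ is the analogous co-harmonic function, these being dual by the symmetry of $u_\Gamma(x,y)$ for the symmetric stable process. The general Hunt--Nagasawa theorem then states that the time reversal of the $M$-transformed process from its lifetime (here the $L$-time $\texttt{k}$, finite because $\mathbb{P}^\triangleleft_0$ started from the apex enters and the reversed process must exit at $0$, which is where the entrance law lives) is the $H$-transformed dual process, which is exactly $(X,\mathbb{P}^\triangleright)$. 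I would be careful to state the duality in the form that applies to a transient process started from an entrance law at a single point (Theorem \ref{0} provides $\mathbb{P}^\triangleleft_0$ precisely as such an entrance law $CM(y)n_t(y)\d y$), and to check that $\texttt{k}$ is genuinely an $L$-time in the sense defined just above the theorem; this is the analogue of the last-exit decomposition and should follow because $\texttt{k}$ is the last time $X$ is at $0$ read along the reversed direction, equivalently the first hitting of $0$. The main obstacle I anticipate across the whole proof is \emph{not} any single estimate but the careful matching of null sets, filtrations, and the precise hypotheses of the Hunt--Nagasawa theorem (e.g. absolute continuity of semigroups, which holds here because of the transition densities $p^\Gamma_t$) so that the reversal statement is literally applicable --- in particular making rigorous that the time-reversed process of $\mathbb{P}^\triangleleft_0$ is Markovian and homogeneous rather than merely a reversed additive functional. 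Once that framework is in place, part (ii) is a direct citation, and part (i) combined with (ii) also gives a consistency check: reversing $(X,\mathbb{P}^\triangleleft_0)$ should be compatible with inverting-and-time-changing $(X,\mathbb{P}^\triangleleft_x)$ as $x\to 0$, which I would note but not belabour.
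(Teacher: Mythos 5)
Your proposal follows the paper's own strategy closely: for part (i) the same algebraic cancellation, driven by the Kelvin identity $H(x)=|x|^{\alpha-d}M(Kx)$, applied after the Riesz--Bogdan--\.Zak transform; for part (ii) the same appeal to Nagasawa's reversal theorem, with the duality between the $M$- and $H$-transformed semigroups coming from the symmetry of $p^{\Gamma}_t$ (Hunt's switching identity). One place where you underestimate the work: you frame the obstacle in (ii) as ``matching null sets, filtrations and hypotheses,'' but the paper has to prove the non-trivial Lemma that the reference measure $G^{\triangleleft}(0,\d x)=\int_0^\infty\mathbb{P}^{\triangleleft}_0(X_t\in\d x)\d t$ equals (up to a constant) $M(x)H(x)\d x$; this requires splitting the time-integral at $t=1$, dominated convergence using the uniform two-sided bound $\mathbb{P}_x(\kappa_\Gamma>t)\asymp M(x)t^{-\beta/\alpha}$ and the estimate $p^\Gamma_1(x,y)\lesssim \mathbb{P}_x(\kappa_\Gamma>1)\mathbb{P}_y(\kappa_\Gamma>1)$ from \cite{BPW}, and finally an identification with the Green function limit already computed in \cite{BPW}. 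Starting from the entrance law $CM(y)n_t(y)\d y$ is the right move, but integrating it out in $t$ to reach the reference measure is the real content, not bookkeeping.
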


\medskip

Our third main theorem considers the possibility of a recurrent extension from the origin of of $(X, \mathbb{P}^\triangleright)$, similar in spirit to Theorem \ref{recex}.

\begin{theorem}\label{recexctsat0}
Let $\mathbf{N}^{\triangleright}$ be a self-similar excursion measure compatible with $(X,\mathbb{P}^{\triangleright}).$ We have that there exists a $\gamma\in (0, {\alpha^{-1}}(d+2\beta-\alpha)\wedge 1)$, a constant $a\geq 0$, and a measure $\pi^{\triangleright}$ on $\Omega$ such that $a\pi^{\triangleright}\equiv 0,$ $\int_{\Omega}\pi^{\triangleright}(\d\theta)H(\theta)<\infty,$ and  $\mathbf{N}^{\triangleright}$ can be represented by, for any $t>0,$ and any $A\in\mathcal{F}_t$ 
\begin{equation}\label{EL-decomp3}
\begin{split}
\mathbf{N}^{\triangleright}(A \,,\,  t<\zeta) =a\mathbb{E}^{\triangleleft}_0\left[\frac{H(X_t)}{M(X_t)}\mathbf{1}_A\right] +\int_0^\infty\frac{\d r}{r^{1+\alpha\gamma}}\int_{\Omega}\pi^{\triangleright}(\d\theta)\,\mathbb{E}^{\triangleright}_{r\theta}[A \, ,\, t<\kappa_\Gamma].
\end{split}
\end{equation}  
If $0<\beta<(2\alpha-{d})/{2},$ and $a>0,$ the process $(X, \mathbb{P}^{\triangleright})$ has unique recurrent extension that leaves $0$ continuously. If $\beta\geq (2\alpha-{d})/{2},$ then $a=0,$ and there is no recurrent extension that leaves $0$ continuously.  
\smallskip

Conversely, for each $\gamma\in (0, \alpha^{-1}(d+2\beta-\alpha)\wedge 1)$, and $\pi^{\triangleright}$ a non-trivial measure satisfying the above conditions,  there is a unique recurrent extension that leaves zero by a jump and such that 
$$\mathbf{N}^{\triangleright}\left(|X_{0+}|\in \d r, \arg(X_{0+})\in \d \theta\right)=\frac{\d r}{r^{1+\alpha\gamma}}\pi^{\triangleright}(\d \theta), \qquad r>0, \theta\in \Omega.$$
\smallskip

Finally, any self-similar recurrent extension of $(X,\mathbb{P}^{\triangleright})$ with excursion measure $\mathbf{N}^{\triangleright},$ has an invariant measure 
\begin{equation*}
\begin{split}
\cenpi\!{}^\triangleright (\d x) &:=\mathbf{N}^{\triangleright}\left(\int^{\zeta}_{0}\mathbf{1}_{(X_t\in \d x)}\d t\right)
\\
&= a|x|^{2({\alpha -d-\beta})}M(x)^2\d x+\int_0^\infty\frac{\d r}{r^{1+\alpha\gamma}}\int_{\Omega}\pi^{\triangleright}(\d\theta)\,\mathbb{E}^{\triangleright}_{r\theta}\left[\int^{\kappa_\Gamma}_{0}\mathbf{1}_{(X_t\in \d x)}\d t\right],
\end{split}
\end{equation*}
which is unique up to a multiplicative constant, and this measure is sigma-finite but not finite. 
\end{theorem}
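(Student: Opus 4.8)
\textbf{Proof proposal for Theorem \ref{recexctsat0}.}

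The plan is to deduce Theorem \ref{recexctsat0} from Theorem \ref{recex} by transporting the entire structure through the Riesz--Bogdan--\.Zak-type transformation of Theorem \ref{BZcone}(i), rather than repeating the It\^o-synthesis construction from scratch. First I would recall that Theorem \ref{BZcone}(i) identifies $(X,\mathbb{P}^\triangleright)$, run up to its absorption time at the apex, as the time-changed inversion $(KX_{\eta(t)},t\ge 0)$ of $(X,\mathbb{P}^\triangleleft)$. Since $(X,\mathbb{P}^\triangleleft)$ is the Doob $h$-transform of $(X,\mathbb{P}^\Gamma)$ by $M$ via \eqref{MCOM}, and since $H(x)=|x|^{\alpha-d}M(Kx)$ by \eqref{COMH}, the transformation $x\mapsto Kx$ together with the time change $\eta$ intertwines $(X,\mathbb{P}^\Gamma)$ (killed on exiting $\Gamma$, near the apex) with $(X,\mathbb{P}^\triangleright)$ (killed on hitting $0$). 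Consequently, a self-similar recurrent extension of $(X,\mathbb{P}^\triangleright)$ from $0$ corresponds, under $K$ and the inverse time change, to a self-similar recurrent extension of $(X,\mathbb{P}^\triangleleft)$ from $\infty$ --- equivalently, by another inversion, to a recurrent extension of $(X,\mathbb{P}^\Gamma)$ from $0$, which is exactly the object classified in Theorem \ref{recex}. The excursion measures $\mathbf{N}^\triangleright$ and $\mathbf{N}^\Gamma$ are thus in bijection, via $\mathbf{N}^\triangleright(\,\cdot\,)=\mathbf{N}^\Gamma\big(K\,\cdot\,\text{ time-changed by }\eta\big)$, and the parameters $(a,\gamma,\pi^\Gamma)$ map to $(a,\gamma',\pi^\triangleright)$ under an explicit recalculation of the scaling exponent and the entrance measure.

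Second, I would track the self-similarity exponent through the transformation. Under $K$ together with $\eta(t)=\inf\{s:\int_0^s|X_u|^{-2\alpha}\d u>t\}$, a path scaled by $c$ has its radial coordinate inverted, so the scaling index $\gamma$ appearing in condition (iv')/\eqref{potscale} for $\mathbf{N}^\Gamma$ is conjugated to a new index for $\mathbf{N}^\triangleright$; a short computation with the homogeneity of $M$ (degree $\beta$) and of $H$ (degree $\alpha-\beta-d$) shows that the admissible range $\gamma\in(0,\beta/\alpha)$ for the cone process becomes $\gamma\in(0,\alpha^{-1}(d+2\beta-\alpha)\wedge 1)$ for $(X,\mathbb{P}^\triangleright)$, which is exactly why the hypothesis $0<d+2\beta-\alpha$ is needed for this range to be nonempty. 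The jump-entrance law $\mathbf{N}^\triangleright(|X_{0+}|\in\d r,\arg(X_{0+})\in\d\theta)=r^{-1-\alpha\gamma}\d r\,\pi^\triangleright(\d\theta)$ then follows from the corresponding statement in Theorem \ref{recex} after inversion $r\mapsto 1/r$ reverses the role of ``leaving $0$'' on one side and ``entering from $\infty$'' on the other; the integrability constraint $\int_\Omega\pi^\Gamma(\d\theta)M(\theta)<\infty$ turns into $\int_\Omega\pi^\triangleright(\d\theta)H(\theta)<\infty$ because the Radon--Nikodym cocycle carried through the conditioning is $H$ rather than $M$. The continuous-exit case ($a>0$) transports similarly: the identity $\mathbf{N}^\Gamma(A,t<\zeta)=a\,\mathbb{E}^\triangleleft_0[M(X_t)^{-1}\mathbf 1_A]+\cdots$ becomes $\mathbf{N}^\triangleright(A,t<\zeta)=a\,\mathbb{E}^\triangleleft_0[H(X_t)M(X_t)^{-1}\mathbf 1_A]+\cdots$ once one notes that, under $\mathbb{P}^\triangleleft_0$, inversion of the entrance law at the apex produces the factor $H/M$ (rather than $1/M$), and that the constraint $\gamma=\beta/\alpha$ forcing continuity translates into $\beta<(2\alpha-d)/2$ via the exponent bookkeeping above; when $\beta\ge(2\alpha-d)/2$ the would-be continuous-exit exponent falls outside the admissible interval, so $a=0$ necessarily.

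Third, for the invariant (occupation) measure $\cenpi^\triangleright$, I would use $\cenpi^\triangleright(\d x)=\mathbf{N}^\triangleright(\int_0^\zeta\mathbf 1_{(X_t\in\d x)}\d t)$ and carry the formula for $\cenpi^\Gamma$ from Theorem \ref{recex} through the map $x\mapsto Kx$, remembering that the time change $\eta$ inserts a Jacobian factor $|X_t|^{-2\alpha}$ into the occupation density and that the spatial inversion contributes a factor $|x|^{-2d}$ from $\d(Kx)=|x|^{-2d}\d x$. Applied to the continuous-exit term $a|x|^{\alpha-d-\beta}M(\arg x)\d x$ this produces $a|x|^{2(\alpha-d-\beta)}M(x)^2\d x$ after using $M(x)=|x|^\beta M(\arg x)$ and $H(x)=|x|^{\alpha-\beta-d}M(\arg x)$, matching the stated expression; the jump-type term transforms into the claimed integral against $\pi^\triangleright$ by the same substitution together with the Markov property under $\mathbf{N}^\triangleright$, exactly as in the proof of Theorem \ref{recex}. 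Uniqueness up to a constant, and the assertion that the measure is sigma-finite but not finite, follow from the corresponding facts for $\cenpi^\Gamma$ since the transformation is a bijection that preserves these properties; finiteness fails because the potential $U_\Gamma(r\theta,\cdot)$ integrated against $r^{-1-\alpha\gamma}\d r$ diverges at one end of the $r$-range, and inversion does not repair this.

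\textbf{Main obstacle.} The delicate point is not any single identity but the bookkeeping that makes the whole correspondence \emph{self-consistent}: one must verify that the time change $\eta$ (and its inverse) carries a self-similar excursion measure compatible with $(X,\mathbb{P}^\triangleleft)$ to one compatible with $(X,\mathbb{P}^\triangleright)$ \emph{and} that the normalising local time at $0$ can be chosen consistently on both sides, so that conditions (i)--(iv) of a self-similar excursion measure are genuinely preserved --- in particular condition (iii), $\mathbf{N}^\triangleright(1-{\rm e}^{-\zeta})<\infty$, is not automatic because $\eta$ distorts lifetimes and must be controlled using the scaling \eqref{scalen}--\eqref{scalen2} and the estimate of Proposition \ref{BPWprop}. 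I expect this verification, together with the precise exponent computation that pins down the interval $(0,\alpha^{-1}(d+2\beta-\alpha)\wedge 1)$ and the threshold $\beta=(2\alpha-d)/2$, to absorb most of the technical work; the rest is a transcription of the proof of Theorem \ref{recex}.
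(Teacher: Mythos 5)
The paper's proof is a direct repetition of the It\^o-synthesis argument for Theorem \ref{recex}, with $(X,\mathbb{P}^\triangleright)$ in the role of $(X,\mathbb{P}^\Gamma)$, $H$ in the role of $M$, and $\beta'=2\beta+d-\alpha$ in the role of $\beta$; the required inputs (a') and (b') are the analogues for $\mathbb{P}^\triangleright$ of the two estimates that drove Lemma \ref{lemmadecomposition}, and these are sourced from Lemma~4.3 of \cite{BB} rather than derived by transformation. Your proposal instead tries to pull the classification back through the Riesz--Bogdan--\.Zak map, and this contains a genuine gap at its central step.

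The claimed correspondence $\mathbf{N}^\triangleright(\cdot)=\mathbf{N}^\Gamma\bigl(K\,\cdot\,\text{time-changed by }\eta\bigr)$ is false. Under $K$ and $\eta$, Theorem \ref{BZcone}(i) identifies $(X,\mathbb{P}^\triangleleft)$ with $(X,\mathbb{P}^\triangleright)$, while the minimal process for $\mathbf{N}^\Gamma$ is $(X,\mathbb{P}^\Gamma)$, which is \emph{not} the inversion image of $(X,\mathbb{P}^\triangleright)$. Concretely, an $\mathbf{N}^\Gamma$-excursion starts at the apex, evolves under $\mathbb{P}^\Gamma$, and ends on exiting $\Gamma$; its $K,\eta$-image therefore starts at $\infty$, evolves under the $|x|^{\alpha-d}$-transform $\mathbb{P}^\circ$ restricted to $\Gamma$, and still ends by exiting $\Gamma$ --- none of which matches an $\mathbf{N}^\triangleright$-excursion, which starts at $0$, evolves under the $H$-transform $\mathbb{P}^\triangleright$, and ends by continuous absorption at $0$. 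Your bridge ``a recurrent extension of $(X,\mathbb{P}^\triangleleft)$ from $\infty$ --- equivalently, by another inversion, a recurrent extension of $(X,\mathbb{P}^\Gamma)$ from $0$'' has no content: $(X,\mathbb{P}^\triangleleft)$ is conservative (no excursion theory from $\infty$ is set up), and a second application of $K,\eta$ simply returns $(X,\mathbb{P}^\triangleright)$, not $(X,\mathbb{P}^\Gamma)$. The actual relation between the two $h$-processes is that $\mathbb{P}^\triangleright$ is the $J$-transform of $\mathbb{P}^\triangleleft$ with $J(x)=H(x)/M(x)=|x|^{\alpha-2\beta-d}$ (equivalently the $H$-transform of $\mathbb{P}^\Gamma$), which is an absolute-continuity relation, not a spatial inversion, and it is exactly this relation the paper exploits through the estimates (a'), (b').

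Your exponent bookkeeping ($\beta\mapsto\beta'=2\beta+d-\alpha$, the resulting interval $\gamma\in(0,\alpha^{-1}(d+2\beta-\alpha)\wedge1)$, and the threshold $\beta<(2\alpha-d)/2$) and the final form of $\cenpi^{\triangleright}$ are correct and do agree with the paper's answer, and the observation about $\mathbf{N}^\triangleright(1-{\rm e}^{-\zeta})<\infty$ being the critical finiteness condition is also on the mark. But these arrive in the paper from redoing Lemma~\ref{lemmadecomposition} with $(H,\beta',\mathbb{P}^\triangleright)$, not from a pushforward of $\mathbf{N}^\Gamma$, so the ``main obstacle'' you flag (controlling condition (iii) under the time change) is not the right one: there is no consistent pushforward to control in the first place. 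To repair the argument you would abandon the $K,\eta$-transport and instead verify directly that $\mathbb{P}^\triangleright$ satisfies the two hypotheses (a'), (b') --- which is precisely what the paper does, appealing to Lemma~4.3 of \cite{BB} --- and then run the proof of Theorem~\ref{recex} verbatim with the substitution $(M,\beta)\leadsto(H,\beta')$.
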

It is interesting to remark here that if the cone is such that $\beta\geq (2\alpha-d)/{2},$ or equivalently $(d+2\beta-\alpha)/\alpha\geq 1,$ there is no recurrent extension that leaves zero continuously. This is due to the fact that the closer $\beta$ is to $\alpha$ the smaller the cone is. Because the process is conditioned to hit zero continuously, a process starting from zero should return too quickly to zero, forcing there to be many small excursions,  whose lengths become increasingly difficult to glue end to end in any finite interval of time. We could understand this phenomena with heuristic language by saying that `the conditioned stable process is unable to escape the gravitational attraction to the origin because of  the lack of space needed to do so'.

\medskip

The rest of this paper is organised as follows. In the next section we give the proof of Theorem \ref{conditioned}.
 Thereafter, we prove the above stated results in an order which differs from their presentation. We prove Proposition \ref{jumprate} in Section \ref{provejumprate} and then turn to the proof of Theorem \ref{oo} and Corollary \ref{ooC} in Sections \ref{proveoo} and \ref{proveooC}, respectively.
This gives us what we need to construct the process conditioned to continuously absorb at the apex of $\Gamma$, i.e. Theorem \ref{conditionapex},  in Section \ref{proofconditionapex}. With all these tools in hand, we can establish the duality properties of Theorem \ref{BZcone} in Section \ref{proofBZcone}. Duality in hand, in Section \ref{proof0}, we can return to the Skorokhod convergence of the conditioned process $(X, \mathbb{P}^\triangleleft_x)$, $x\in\Gamma$, to the candidate for $(X,\mathbb{P}^\triangleleft_0)$, described in \eqref{P0}, and prove Theorem \ref{0}. Finally, in Sections \ref{proofrecex} and \ref{proofrecexctsat0}, we complete the paper by looking at the recurrent extension of the conditioned processes in Theorem \ref{recex} and \ref{recexctsat0} respectively.

\section{Proof of Theorem \ref{conditioned}}
We break the proof into the constituent parts of the statement of the theorem.
\subsection{Proof of part (i)}  
For $A\in\mathcal{F}_t$ and $0\neq x\in\Gamma$,
\begin{equation}
\mathbb{P}^{\triangleleft}_x(A,\, t<\zeta)=\lim_{s\to\infty}\mathbb{E}_x\left[  \mathbf{1}_{(A\cap \{t <\kappa_\Gamma\})}\frac{\mathbb{P}_{X_t}(\kappa_\Gamma>s)}{\mathbb{P}_{x}(\kappa_\Gamma>t+s)}  \right].
\label{conditionallimit}
\end{equation}
From Lemma 4.2 of \cite{BB}, for $t^{-1/\alpha}|x|<1$, we have the bound 
$$\frac{\mathbb{P}_x(\kappa_{\Gamma}>t)}{M(x)t^{-\beta/\alpha}}\in [C^{-1},C], $$
for some $C>1$.
Otherwise, if $t^{-1/\alpha}|x|>1$ then,
$\mathbb{P}_x(\kappa_{\Gamma}>t)\leq 1 <|x|^\beta t^{-\beta/\alpha}$. Hence, noting that $M$ is uniformly bounded from above, we have that, for all $x\in\Gamma$ and $s>0$, there is a constant $C'$ such that  $\mathbb{P}_x(\kappa_{\Gamma}>s)\leq C'|x|^\beta s^{-\beta/\alpha}$.
Hence, for $s$ sufficiently large, there is another constant $C''$ (which depends on $x$) such that
\[
\frac{\mathbb{P}_{X_t}(\kappa_\Gamma>s)}{\mathbb{P}_{x}(\kappa_\Gamma>t+s)}\leq \frac{C'|X_t|^{\beta}s^{-\beta/\alpha}}{C^{-1}M(x)(t+s)^{-\beta/\alpha}}<C''|X_t|^{\beta}.
\]
It is well known that $X_t$ has all absolute moments of any order in $(0,\alpha)$; cf. Section 25 of Sato \cite{Sato}. The identity \eqref{MCOM}  now follows from Proposition \ref{BPWprop} and the Dominated Convergence Theorem. Furthermore, by construction, for any $x\in \Gamma,$ $$\mathbb{P}^{\triangleleft}_x(t<\texttt{k}) =1, \qquad \forall t\geq 0.$$ It thus follows that under $\mathbb{P}^\triangleleft,$ $X$ has an infinite lifetime.

%
%To show that the resulting object is a probability measure, we need to show that, according to our definition,  . To this end, let us start by noting by the Markov property that 
%
%\[
%1= \mathbb{E}_x\left[  \mathbf{1}_{(t <\kappa_\Gamma)}\frac{\mathbb{P}_{X_t}(\kappa_\Gamma>s)}{\mathbb{P}_{x}(\kappa_\Gamma>t+s)}  \right], \qquad s,t\geq 0
%\]
%and hence the limit is also unity as $s\to\infty$. Comparing with \eqref{conditionallimit} shows conservativeness of $\mathbb{P}^\triangleleft_x$, $x\in\Gamma$, on $\mathbb{D}$ follows.

\smallskip

\subsection{Proof of part (ii)}\label{part2} That $(X, \mathbb{P}^\triangleleft)$ is a ssMp is a consequence of $(X, \mathbb{P})$ having the scaling property and the strong Markov property. Indeed, $(X, \mathbb{P}^\triangleleft)$ is a strong Markov process, since it is obtained via an h-transform of $(X,\mathbb{P}).$ To verify that it has the scaling property,  let $c>0$ and define $\tilde{X}_t: =  cX_{c^{-\alpha }t}$, $t\geq 0$. We have that 
\begin{equation}
\tilde{\kappa}_\Gamma:=\inf\{t>0: \tilde{X}_t\notin \Gamma\}= c^\alpha \kappa_\Gamma,
\label{stopscale}
\end{equation}
 and by the scaling property 
\begin{equation}\label{scaling}
(\tilde{X},\mathbb{P}_x)\stackrel{\text{Law}}{=}(X,\mathbb{P}_{cx}),\qquad x\in \Gamma.
\end{equation}  Considering the transition probabilities of $(X, \mathbb{P}^\triangleleft)$, we note with the help of \eqref{scaling} and \eqref{1/a} that, for bounded and measurable $f$, 
\begin{align*}
\mathbb{E}_x^\triangleleft [f (\tilde{X}_{ t}) ]& = \mathbb{E}_x\left[\mathbf{1}_{(c^{-\alpha}t<\kappa_\Gamma
)}f(c X_{c^{-\alpha }t})\frac{M( X_{c^{-\alpha } t}   )}{M(x)}\right]\\
&=\mathbb{E}_x\left[\mathbf{1}_{(t<\tilde\kappa_\Gamma
)}f(\tilde{X}_t)\frac{|\tilde{X}_t|^\beta M( \tilde{X}_{t}/|\tilde{X}_{t}|   )}{|cx|^\beta M(cx/ |cx|)}\right]\\
&=\mathbb{E}_{cx}\left[\mathbf{1}_{(t<\kappa_\Gamma
)}f({X}_t)\frac{|{X}_t|^\beta M( {X}_{t}/|{X}_{t}|   )}{|cx|^\beta M(cx/ |cx|)}\right]\\
&=\mathbb{E}_{cx}^\triangleleft [f ({X}_{ t}) ]
, \qquad x\in\Gamma.
\end{align*}
This last observation together with the Markov property ensures the required self-similarity of $(X, \mathbb{P}^\triangleleft)$. \hfill$\square$

%\smallskip
%
%
%%To complete the claim that $(X,\mathbb{P}^\triangleleft)_x$, $x\in\Gamma$ is a self-similar Markov process, we need to verify that it is a regular Markov property. In particular we need to show it is has c\`adl\`ag paths and is quasi-left continuous. However these properties are directly inherited from its definition as a Doob $h$-transform and are justified in a standard way that are left to the reader. 

\section{Proof of Proposition \ref{jumprate}}\label{provejumprate} We use a method taken from  Theorem I.3.14 of \cite{ALEAKyp}. 
From the  Lamperti--Kiu transformation \eqref{eq:lamperti_kiu}, we have 
\begin{equation}
\label{invL}
\xi_{t}=\log(|X_{{A}(t)}|/|X_{0}|),\qquad \Theta_{t}=\frac{X_{{A}(t)}}{|X_{{A}(t)}|},\qquad t\geq 0,
\end{equation}
where ${A}(t) = \inf\{s>0: \int^{s}_{0}|X_{u}|^{-\alpha}\d u> t\}$.

\smallskip

To show that \eqref{compensation} holds,
we first note that,  on account of the fact that $A(t)$ in \eqref{invL} is an almost surely finite stopping time {\color{black} and the simple relation $A({\texttt{k}}^\Omega) = \kappa_\Gamma$, where ${\texttt{k}}^\Omega = \inf\{t>0: \Theta_t \not\in \Omega\}$}, from the martingale property in \eqref{MCOM}, we have by Theorem III.3.4  of \cite{JS} that 
\begin{equation}
\left.\frac{\d \mathbf{P}^\triangleleft_{x,\theta}}{\d \mathbf{P}_{x,\theta}}\right|_{\mathcal{G}_t} = 
{\rm e}^{\beta (\xi_t-x)}\frac{M(\Theta_t)}{M(\theta)}\mathbf{1}_{(t<
{\texttt{k}}^\Omega
) 
},
\qquad t\geq 0,
\label{otherCOM}
\end{equation}
where   $\mathcal{G}_t  =\sigma((\xi_s, \Theta_s), s\leq t)$, $t\geq 0$.

\smallskip

Now write
\[
\mathbf{E}^\triangleleft_{0,\theta}\left(\sum_{s>0} G_sf(s,\xi_{s-},\Delta\xi_{s},\Theta_{s-}, \Theta_s)\right) 
= \lim_{t\to\infty}\mathbf{E}_{0,\theta}\left(\mathcal{M}_t
\sum_{0<s\leq t} G_sf(s,\xi_{s-},\Delta\xi_{s},\Theta_{s-}, \Theta_s)\right)
\]
where 
$\mathcal{M}_t  = \mathbf{1}_{(t<\texttt{k}^\Omega)}{\rm e}^{\beta\xi_t}M(\Theta_t)/M(\theta)$, $t\geq 0$ is the martingale density from the change of measure \eqref{MCOM}.
Suppose we write $\Sigma_t$ for the sum term in the final expectation above. The semi-martingale change of variable formula tells us that 
\[
 \mathcal{M}_t\Sigma_t = \mathcal{M}_0(\theta)\Sigma_0 + \int_0^t \Sigma_{s-}\d \mathcal{M}_s + \int_0^t\mathcal{M}_{s-}\d \Sigma_s +  [\mathcal{M}, \Sigma]_t, \qquad t\geq0,
\]
where $[\mathcal{M}, \Sigma]_t$ is the quadratic co-variation term. 
On account of the fact that  $(\Sigma_t, t\geq0)$, has bounded variation, the latter term takes the form $[\mathcal{M}, \Sigma]_t = \sum_{s\leq t}\Delta M_t \Delta \Sigma_ t$. As a consequence 
\begin{equation}
 \mathcal{M}_t\Sigma_t = \mathcal{M}_0(\theta)\Sigma_0 + \int_0^t \Sigma_{s-}\d \mathcal{M}_s + \int_0^t\mathcal{M}_{s}\d \Sigma_s , \qquad t\geq0,
\label{smg}
\end{equation}Moreover, after taking expectations, as the first in integral in \eqref{smg} is a martingale and $\Sigma_0 =0$, the only surviving terms give us 
 \begin{align*}
 \mathbf{E}^\triangleleft_{0,\theta}&\left(\sum_{s>0} G_sf(s,\xi_{s-},\Delta\xi_{s},\Theta_{s-}, \Theta_s)\right) \\
 &=\mathbf{E}_{0,\theta}\left(
\sum_{s>0}\mathbf{1}_{(t<\texttt{k}^\Gamma)}{\rm e}^{\beta\xi_s}\frac{M(\Theta_s)}{M(\theta)}G_sf(s,\xi_{s-},\Delta\xi_{s},\Theta_{s-}, \Theta_s)\right)\\
&=\mathbf{E}_{0,\theta}\left[\int^{\infty}_{0}\d s\,  G_s\mathbf{1}_{(s<\texttt{k}^\Gamma)} {\rm e}^{\beta\xi_s}\frac{M(\Theta_s)}{M(\theta)} \int_{\Omega}\sigma_1(\d\phi)\int_{\mathbb{R}}\d y \frac{c(\alpha){\rm e}^{y(\beta + d)}}{|{\rm e}^{y}\phi-     \Theta_{s}|^{\alpha+d}}
\frac{M(\phi)}{M(\Theta_s)}f(s, \xi_{s}, y, \Theta_{s}, \phi)\right]\notag\\
&=\mathbf{E}^{\triangleleft}_{0,\theta}\left[\int^{\infty}_{0}\d s\,  G_s \int_{\Omega}\sigma_1(\d\phi)\int_{\mathbb{R}}\d y \frac{c(\alpha){\rm e}^{y(\beta + d)}}{|{\rm e}^{y}\phi-     \Theta_{s}|^{\alpha+d}}
\frac{M(\phi)}{M(\Theta_s)}f(s, \xi_{s}, y, \Theta_{s}, \phi)\right]\notag\\
%&=\int_{\Omega}\int_{\Omega}\int_{\mathbb{R}}\d\sigma_1(\phi)\, {_q}V^\triangleleft_\theta(\d \vartheta)\d y 
%\frac{c(\alpha){\rm e}^{y(\beta+d)}}{|{\rm e}^{y}\phi-   \vartheta|^{\alpha+d}}\frac{M(\phi)}{M(\vartheta)}f( y,\vartheta, \phi),
 \end{align*}
 where in the second equality we have used the jump rate \eqref{freejump} and in the third Fubini's theorem together with \eqref{otherCOM}.
\hfill$\square$

\section{Proof of Theorem \ref{oo}}\label{proveoo}
At the root of our proof of  Theorem \ref{oo}, we will appeal to Markov additive renewal theory in the spirit of Alsmeyer \cite{Alsmeyer1994, Alsmeyer2014},  Kesten \cite{kestenMAP} and Lalley \cite{lalley}.
The radial excursion theory we have outlined in Section \ref{radial}
is a natural mathematical pre-cursor to Markov additive renewal theory, however, one of the problems we have at this point in our reasoning, as it will be seen later, is that it is not yet clear whether there is a stationary behaviour for the process $(\Theta^+,\mathbf{P}^\triangleleft)$, of the radial ascending ladder MAP. Indeed, as already discussed, we will deduce from our calculations here that a stationary distribution does indeed exist in Corollary \ref{ooC}.

\smallskip

We build instead an alternative Markov additive renewal theory around a naturally  chosen discrete subset set ladder points which behave well into the hands of the scaling property of $(X, \mathbb{P}^\triangleleft)$. As the proof is quite long, we break the remainder of this section into a number of steps, marked by subsections. The proof of Theorem \ref{oo} will thus be our.

 \subsection{A discrete ladder MAP}  Under $\mathbb{P}^{\triangleleft}$, define the following sequence of stopping times,
 \[
T_n :=\inf\{t>T_{n-1} : |X_t|> {\rm e}|X_{T_{n-1}}|\}, \qquad n\geq1,
\]
with $T_0=0$, and   
\[
S_n = \sum_{k = 1}^n  A_k\qquad 
 A_n   = \log\frac{|X_{T_n}|}{|X_{T_{n-1}}|}\quad  \text{ and }\quad \Xi_n=\arg(X_{T_n}), \qquad n\geq 1.
 \]
Note in particular that 
\[
X_{T_n}=|x|{\rm e}^{S_n}\Xi_n, \qquad n\geq 1.
\]

Then,  we claim that  $((S_n,\Xi_n), n\geq 0)$, is a Markov additive renewal process. 
To verify this claim, we appeal principally to the strong  Markov and scaling property  of $(X, \mathbb{P}^{\triangleleft})$. Indeed, for any $x\in \Gamma$, we have that for any finite stopping time $T$, under $\mathbb{P}^{\triangleleft}_x,$ the conditional law of $(X_{T+s}, s>0)$ given $(X_u, u\leq T)$ equals that of $(|y|X_{s/|y|^\alpha}, s>0)$ under $\mathbb{P}^\triangleleft_{\arg(y)},$ with $y=X_T.$ Hence, for any $n\geq 0$, by construction, conditionally on $(X_u, u\leq T_n)$ we have that
\begin{equation*}
\begin{split}
T_{n+1} &=\inf\{t>T_{n} : |X_t|> {\rm e}|X_{T_{n}}|\}\\
&=T_n+\inf\{s>0 : |X_{s+T_n}|> {\rm e}|X_{T_{n}}|\}\\
&\stackrel{\text{Law}}{=}T_n+\inf\{s>0 : |X_{T_n}||\tilde{X}_{s/|X_{T_n}|^{\alpha}}|> {\rm e}|X_{T_{n}}|\}\\
&=T_n+|X_{T_{n}}|^{\alpha}\tilde{T}_1,
\end{split}
\end{equation*}
where $\tilde{X}$ depends on $(X_u, u\leq T_n)$ only through $\arg(X_{T_n}),$ has the same law as $(X,\mathbb{P}^{\triangleleft}_{\arg(X_{T_n})}),$ and $\tilde{T_1}=\inf\{t>0 : |\tilde{X}_t|> {\rm e}\}.$ From these facts, it follows that for any bounded and measurable $f$ on $\mathbb{R}\times\Omega$,
\begin{align*}
&\mathbb{E}^{\triangleleft}_x\left[f(S_{n+1}-S_n,\Xi_{n+1})|(S_i,\Xi_i) : i\leq n\right]\\
&=\mathbb{E}^{\triangleleft}_{x}\left[f\left(\log\frac{|X_{T_{n+1}}|}{|X_{T_n}| },\arg(X_{T_{n+1}})\right)|X_{T_{i}}: i\leq n\right]\\
&=\mathbb{E}^{\triangleleft}_{y}\left[f(\log|X_{T_1}|,\arg(X_{T_1}))\right]|_{y = \Xi_n}.
\end{align*}These calculations ensure that $((S_n,\Xi_n), n\geq 0)$, is a Markov additive renewal process. 
Note, this computation also shows that, under $\mathbb{P}^{\triangleleft},$ the modulator $\Xi : = (\Xi_n, n\geq0)$ is also a Markov process. 
%For convenience, let us write {\color{black}its probabilities by} $({\mathbb P}^\triangleleft_\theta, \Omega)$.

\subsection{Application of Markov additive renewal theory}
Let us introduce the MAP renewal function associated to $(S,\Xi)$, for $\Omega,$
\begin{equation}
 V_{\theta}({\rm d} r,{\rm d} \phi) :=\sum_{n=0}^{\infty}\mathbb{P}^{\triangleleft}_{\theta}(S_n \in{\rm d}r,\Xi_n \in{\rm d}\phi), 
 \qquad r\in\mathbb{R}, \phi\in\Omega.
 \qquad 
 \label{Vrenewal}
 \end{equation}
 We will next show that the joint law in Theorem \ref{oo} can be expressed in term of a renewal like equation involving $V_{\theta}.$ 
\begin{lemma}\label{firstRT} For measurable  $f:\Gamma^3\to [0,\infty)$, we have, for $x\in\Gamma\cap B_1$,
\begin{equation}
 \mathbb{E}^{\triangleleft}_x\left[f(X_{\tau_1^{\ominus}},X_{\tau_1^{\ominus}-},X_{m(\tau_1^{\ominus}-)})\right]=\int_0^{-\log|x|} \int_{ \Omega} V_{\arg(x)}({\rm d} r, {\rm d} \phi) G(-\log|x|-r,\phi),
\label{applyMAPRT}
\end{equation}
where, for $\phi\in\Omega$ and $y\geq 0$,
\begin{align}
G(y,\phi)&:=\mathbb{E}^{\triangleleft}_{{\rm e}^{-y} \phi}\left[f(X_{\tau_1^{\ominus}},X_{\tau_1^{\ominus}-},X_{m(\tau_1^{\ominus}-)})\mathbf{1}_{(\tau^\ominus_1\leq \tau_{{\rm e}^{1-y}}^{\ominus})}
\right]
\label{Gdef}
\end{align}
\end{lemma}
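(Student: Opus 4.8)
The plan is to decompose the path of $(X,\mathbb{P}^\triangleleft_x)$, started from $x\in\Gamma\cap B_1$, along the discrete ladder times $T_n$ until the last ladder point before the exit time $\tau^\ominus_1$ of the unit ball, and then apply the strong Markov property at that last ladder point. First I would observe that, since $|X_{T_n}| = |x|\,{\rm e}^{S_n}$ and the process eventually leaves $B_1$ (because under $\mathbb{P}^\triangleleft$ the radial part drifts to $+\infty$, the modulator part being irrelevant for the radius since $|X_{T_n}|\to\infty$), there is almost surely a well-defined last index
\[
N := \max\{n\geq 0 : |X_{T_n}| < 1\} = \max\{n\geq 0 : S_n < -\log|x|\},
\]
and $\tau^\ominus_1$ occurs strictly between $T_N$ and $T_{N+1}$. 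Crucially, the radial maximum $m(\tau^\ominus_1-)$ and the pre- and post-jump positions $X_{\tau^\ominus_1-}$, $X_{\tau^\ominus_1}$ are all determined by the excursion of $X$ away from its running radial supremum that straddles $\tau^\ominus_1$; by construction of the $T_n$, this straddling excursion is exactly the one initiated at $T_N$, so the triple depends on the past only through $X_{T_N} = |x|\,{\rm e}^{S_N}\Xi_N$.

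Next I would condition on $(S_N,\Xi_N) = (r,\phi)$ and use the strong Markov property at $T_N$ together with the scaling property \eqref{1/a} of $(X,\mathbb{P}^\triangleleft)$: from $X_{T_N}$, which has modulus $|x|{\rm e}^{r}<1$ and argument $\phi$, the process evolves as $(X,\mathbb{P}^\triangleleft_{|x|{\rm e}^r\phi})$, and scaling by $|x|^{-1}{\rm e}^{-r}$ turns the exit of $B_1$ into the exit of $B_{1/(|x|{\rm e}^r)} = B_{{\rm e}^{-\log|x|-r}}$; the requirement that no further ladder point is reached before exiting $B_1$ becomes precisely the event $\{\tau^\ominus_1 \le \tau^\ominus_{{\rm e}^{1-(-\log|x|-r)}}\}$ appearing in \eqref{Gdef}. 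This identifies the conditional expectation of $f$ given $\{N=n, (S_n,\Xi_n)=(r,\phi)\}$ with $G(-\log|x|-r,\phi)$, where $y := -\log|x| - r$ ranges in $(0,-\log|x|)$ since $r = S_N < -\log|x|$ and $r\geq 0$. Summing over $n$ and using that $\{N=n\} \supseteq \{S_n \in {\rm d}r\}$ after the conditioning is absorbed into the indicator $\mathbf{1}_{(\tau^\ominus_1\le\tau^\ominus_{{\rm e}^{1-y}})}$ inside $G$ — that is, the constraint "$n$ is the last ladder index" is equivalent to "the post-$T_n$ path exits $B_1$ before its next ladder time", which is already built into $G$ — yields
\[
\mathbb{E}^{\triangleleft}_x\bigl[f(\cdots)\bigr] = \sum_{n=0}^\infty \int_0^{-\log|x|}\int_\Omega \mathbb{P}^\triangleleft_{\arg(x)}(S_n\in{\rm d}r,\Xi_n\in{\rm d}\phi)\,G(-\log|x|-r,\phi),
\]
which is exactly \eqref{applyMAPRT} after recognising the sum-integral against $V_{\arg(x)}$ from \eqref{Vrenewal}; note the radial shift $|x|$ cancels so only $\arg(x)$ enters $V$.

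The main obstacle I anticipate is the bookkeeping around the "last ladder point" decomposition: one must verify carefully that the straddling excursion of the radial supremum at time $\tau^\ominus_1$ really is the one started at $T_N$ and not an earlier one — this is where the defining property of the $T_n$ (each $T_n$ is the first time the radius exceeds ${\rm e}$ times its value at $T_{n-1}$, hence a record time of the radial supremum) must be used, ensuring that between consecutive ladder times the running radial supremum is constant and equal to $|X_{T_{n-1}}|$ until the radius first exceeds it — and to check that the event $\{N = n\}$ decomposes compatibly with the strong Markov property (i.e. that conditioning on $\{N=n\}$ rather than on the unconstrained $\{(S_n,\Xi_n)\in{\rm d}r\,{\rm d}\phi\}$ is harmless because the extra constraint is post-$T_n$ measurable and is exactly the indicator inside $G$). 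A secondary point is confirming that $N<\infty$ a.s., which follows since $S_n\to\infty$ under $\mathbb{P}^\triangleleft$ (the radial part of the conditioned process tends to infinity, as it is transient away from the apex), so that the sum over $n$ is genuinely a finite sum path-by-path and no integrability issue arises for bounded $f$.
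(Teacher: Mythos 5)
Your proposal is correct and follows essentially the same approach as the paper's proof: decompose over the index $n$ for which $T_n < \tau^\ominus_1 \leq T_{n+1}$, apply the strong Markov property at each (genuine stopping time) $T_n$ so that the post-$T_n$ constraint $\{\tau^\ominus_1 \leq T_{n+1}\}$ becomes the indicator inside $G$, and recognise the resulting sum as integration against the renewal measure $V_{\arg(x)}$, with the scaling identification $e^{-y}=|x|e^{r}$ giving $G(-\log|x|-r,\phi)$. The only phrasing caveat is that you cannot literally apply the strong Markov property ``at $T_N$'' since $T_N$ is not a stopping time, but you resolve this correctly by summing over fixed $n$, which is exactly what the paper does.
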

\begin{proof}
Noting that $|X_{T_n}|=|x|{\rm e}^{S_n}$ and $\arg(X_{T_n})=\Xi_n$. Appealing to the strong Markov property we get
\begin{align*}&\mathbb{E}^{\triangleleft}_x\left[f(X_{\tau_1^{\ominus}},X_{\tau_1^{\ominus}-},X_{m(\tau_1^{\ominus}-)})\mathbf{1}_{(\tau_1^{\ominus}<\infty)}
\right]\\
&=\mathbb{E}^{\triangleleft}_x\left[ \sum_{n\geq 0} \mathbf{1}_{(T_n< \tau_1^{\ominus}\leq  T_{n+1})}f(X_{\tau_1^{\ominus}},X_{\tau_1^{\ominus}-},X_{m(\tau_1^{\ominus}-)})\right]\\
&=\mathbb{E}^{\triangleleft}_x\left[ \sum_{n\geq 0} \mathbf{1}_{(T_n< \tau_1^{\ominus})}
\mathbb{E}_{y}\left[f(X_{\tau_1^{\ominus}},X_{\tau_1^{\ominus}-},X_{m(\tau_1^{\ominus}-)})\mathbf{1}_{(\tau_1^{\ominus}\leq T_1)}\right]_{y=X_{T_n}}\right]\\
&=\mathbb{E}^{\triangleleft}_{x}\left[ \sum_{n=0}^{\infty}\mathbf{1}_{(|x|{\rm e}^{S_n} < 1)}\mathbb{E}_{y}\left[f(X_{\tau_1^{\ominus}},X_{\tau_1^{\ominus}-},X_{m(\tau_1^{\ominus}-)})\mathbf{1}_{(\tau_1^{\ominus}\leq  T_1)}\right]_{y=|x|{\rm e}^{S_n}\Xi_n}\right],
\end{align*}
where in the first equality the indicator implies $T_n \leq m(\tau_1^{\ominus}-)$.
We can thus write 
\begin{align*}
&\mathbb{E}^{\triangleleft}_x\left[f(X_{\tau_1^{\ominus}},X_{\tau_1^{\ominus}-},X_{m(\tau_1^{\ominus}-)})\right]\notag\\
&=\int_{0}^{-\log|x|} \int_{\Omega} V_{\arg(x)}({\rm d} r, {\rm d} \phi)\mathbb{E}^{\triangleleft}_{|x|{\rm e}^r \phi}\left[f(X_{\tau_1^{\ominus}},X_{\tau_1^{\ominus}-},X_{m(\tau_1^{\ominus}-)})\mathbf{1}_{(\tau_1^{\ominus}\leq  \tau_{|x|{\rm e}^{r+1}}^{\ominus})}\right],
%&=\int_{0}^{-\log|x|} \int_{\Omega} V_{\arg(x)}({\rm d} r, {\rm d} \phi)\mathbb{E}^{\triangleleft}_{{\rm e}^{-(-\log|x|-r)} \phi}\left[f(X_{\tau_1^{\ominus}},X_{\tau_1^{\ominus}-},X_{m(\tau_1^{\ominus}-)})\mathbf{1}_{(\tau_1^{\ominus}\leq  \tau_{|x|{\rm e}^{r+1}}^{\ominus})}\right],
\end{align*}
which agrees with the statement of the lemma.
\end{proof}
We now have all the elements to explain the strategy we will follow to prove Theorem \ref{oo}. In light of the conclusion of Lemma \ref{firstRT}, we will apply the Markov Additive Renewal Theorem, see for example Theorem 2.1 of Alsmeyer \cite{Alsmeyer1994}. 
We state the result below for completion in a form that is more appropriate for our purposes.

{\color{black}
\begin{theorem}[Alsmeyer \cite{Alsmeyer1994}]\label{alsthreorem}
Suppose we have the following  conditions:
%Furthermore, we will see it is possible to remove the requirement that the limit is taken along the sequence of points $a\theta$, for $a\to0$ and $\upsilon^*$-a.e. $\theta$ and replaced by taking limits along $\Gamma\ni x\to0$. 
\begin{itemize}
\item[(I)] The process $\Xi$  is an {\it aperiodic Harris recurrent} Markov chain, in the sense that there exists a probability measure, $\rho(\cdot)$ on $\mathcal{B}(\Omega)$ (Borel sets in $\Omega$) such that, for some $\lambda>0$,
\begin{equation}
{\mathbb P}^{\triangleleft}_{\theta}(\Xi_1 \in E) \geq \lambda \rho(E), \text{ for all  }\Omega, E\in\mathcal{B}(\Omega).
\label{Harris2}
\end{equation}
\item[(II)] Under $\mathbf{P}^\triangleleft$, $(\Xi_n, n\geq 0),$ has a stationary distribution, that is 
 \[
  \upsilon^*(\d\theta):= \lim_{n\to\infty}\mathbf{P}^\triangleleft_{0,\phi}(\Xi_n \in \d \theta), \qquad \theta\in\Omega, \phi\in \Omega, 
  \]
  exists as a non-degenerate distributional weak limit.
\item[(III)] With $\upsilon^*$ as above \begin{equation}
\mathbb{E}^\triangleleft_{\upsilon^*}[S_1]: = \int_{\Omega} \upsilon^*({\rm d} \theta)\mathbb{E}^{\triangleleft}_{\theta}[S_1]<\infty.
\label{Alsmeyer1}
\end{equation}
\item[(IV)] For any continuous  $f:\Gamma^3\to[0,\infty),$ such that $f$ is uniformly bounded, the mapping $r\mapsto G(r,\phi)$ is a.e. continuous, for any $\phi$ fixed, and
\begin{equation}
\int_{\Omega}  \int_0^{\infty} \upsilon^*(\d \phi)\sum_{n\geq 0} \sup_{nh<r\leq (n+1)h}G(r,\phi)<\infty,
\label{Alsmeyer2}
\end{equation}
for some $h>0.$
\end{itemize}
Then %there is a probability measure $\upsilon^*,$ such that 
for $\upsilon^*$-a.e. $\Omega$, %for any continuous $f:\Gamma^3\to [0,\infty)$,
\begin{equation}
\label{Alsmeyer0}
\lim_{a\to0}\mathbb{E}^{\triangleleft}_{a\theta}\left[f(X_{\tau_1^{\ominus}},X_{\tau_1^{\ominus}-},X_{m(\tau_1^{\ominus}-)})\right]
= \frac{1}{\mathbb{E}^\triangleleft_{\upsilon^*}[S_1]}
\int_{\Omega}  \int_0^{\infty} \upsilon^*(\d \phi){\rm d} r \,G(r,\phi),
\end{equation}
where $G$ is as defined in (\ref{Gdef}). 
\end{theorem}
}

Before showing the conditions (I)-(IV) above hold, let us show that, a slightly enhanced version of Theorem \ref{alsthreorem} holds. {\color{black} More precisely we will show first the following corollary.
\begin{corollary}\label{enhancement}
Under the assumption that (\ref{Alsmeyer0}) and (I)-(II) are satisfied,
we can  remove the requirement that the limit is taken along the sequence of points $a\theta$, for $a\to0$ and $\upsilon^*$-a.e. $\theta$  in \eqref{Alsmeyer0}  and replace it by taking limits along $\Gamma\ni x\to0$. 
\end{corollary}

We note that this corollary, once proved,  also ends the proof of Theorem \ref{oo}.  }
In order to prove Corollary \ref{enhancement}, we must first recall the following lemma and deduce a relevant corollary which deals with the Boundary Harnack Principle. This will also be useful later on.

\smallskip

In the current setting, the Boundary Harnack Principle can be formulated as follows (see e.g. Bogdan et al. (BHP) in  \cite{BPW} and Bogdan \cite{Bogdan97}).

\begin{lemma}\label{L-BHP} Write $B_c: = \{x\in\mathbb{R}^d: |x|<c \}$ for the ball of radius $c>0$. Suppose that  $u,v:
\Gamma\to[0,\infty)$ are  functions satisfying
 $u(x)=v(x)=0$ whenever $x\in\Gamma^c \cap B_1$, and  are regular harmonic on $\Gamma\cap B_{1}$, meaning that, for each  $x\in\Gamma\cap B_1$,
\[
\mathbb{E}_x\left[u(X_{\tau_1^{\ominus}\wedge \kappa_\Gamma})\right] =u(x) \quad \text{ and } \quad  \mathbb{E}_x\left[v(X_{\tau_1^{\ominus}\wedge \kappa_\Gamma})\right] =v(x).
\]
Suppose, moreover, that  $u(x_0)=v(x_0)$ for some $x_0\in\Gamma\cap B_{1/2}$. Then, there exists a constant $C_1=C_1(\Gamma,\alpha)$ (which does not depend on the choice of $u$ or $v$) such  that,
\begin{equation}
C_1^{-1}v(x)\leq u(x)\leq C_1 v(x), \qquad x\in \Gamma\cap B_{1/2} .
\label{BHI}
\end{equation}
\end{lemma}

It is worth noting immediately that $M$ is a regular harmonic function on $\Gamma\cap B_{1}$ according to the above definition.
Indeed, from \eqref{MCOM}, the Optional Sampling Theorem and dominated convergence, using e.g. Theorem A in Blumenthal et al. \cite{BGR} which ensures $\mathbb{E}_x[|X_{\tau^\ominus_1}|^\beta]<\infty$, we know that 
\begin{align*}
M(x) &= \lim_{t\to\infty}\mathbb{E}_x\left[|X_{t\wedge \tau^\ominus_1}|^\beta M(\arg(X_{t\wedge \tau^\ominus_1})\mathbf{1}_{(t\wedge\tau^\ominus_1 < \kappa_\Gamma)}\right] \\
&= \mathbb{E}_x\left[M(X_{\tau^\ominus_1})\mathbf{1}_{(\tau^\ominus_1 < \kappa_\Gamma)}\right] \\
&= \mathbb{E}_x\left[M(X_{\tau^\ominus_1\wedge\kappa_\Gamma })\right]
\end{align*}
 As $M$ can only be defined up to a multiplicative constant, without loss of generality, we henceforth assume there is a $x_0\in\Gamma\cap B_{1/2}$, such that $M(x_0)=1$.

\begin{corollary}\label{BHP} Let $x_0$ be as above. For each $f\geq 0$ on $\mathbb{R}^d$ such that
\[
0<\mathbb{E}_{x_0}\left[f(X_{\tau_1^{\ominus}})\mathbf{1}_{(\tau_1^{\ominus}<\kappa_\Gamma)}\right]<\infty,
\]
there is a constant $C_1 = C_1(\Gamma,\alpha)$ (which does not depend on the choice of $f$) such that, for all $x\in \Gamma\cap B_{1/2}$,
$$  C_1^{-1}M(x)  \leq \frac{\mathbb{E}_x\left[f(X_{\tau_1^{\ominus}})\mathbf{1}_{(\tau_1^{\ominus}<\kappa_\Gamma)}\right]}{\mathbb{E}_{x_0}\left[f(X_{\tau_1^{\ominus}})\mathbf{1}_{(\tau_1^{\ominus}<\kappa_\Gamma)}\right] }  \leq C_1 M(x).$$
\end{corollary}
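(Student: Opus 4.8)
The plan is to deduce the corollary from the Boundary Harnack Principle (Lemma 3.8) by choosing the two comparison functions appropriately. First I would fix $f\geq 0$ satisfying the stated non-degeneracy condition at $x_0$, and set
\[
v(x) = M(x), \qquad u(x) = \frac{M(x_0)}{\mathbb{E}_{x_0}\!\left[f(X_{\tau_1^{\ominus}})\mathbf{1}_{(\tau_1^{\ominus}<\kappa_\Gamma)}\right]}\,\mathbb{E}_x\!\left[f(X_{\tau_1^{\ominus}})\mathbf{1}_{(\tau_1^{\ominus}<\kappa_\Gamma)}\right].
\]
The normalising constant is chosen precisely so that $u(x_0)=M(x_0)=v(x_0)$, which is the matching hypothesis of Lemma 3.8. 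Both $u$ and $v$ vanish on $\Gamma^c\cap B_1$: for $v=M$ this is the remark that $M\equiv 0$ off $\Gamma$, and for $u$ it follows from the indicator $\mathbf{1}_{(\tau_1^{\ominus}<\kappa_\Gamma)}$ together with $f(\Delta)=0$, since a point started outside $\Gamma$ has $\kappa_\Gamma=0$.

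Next I would verify that $u$ and $v$ are regular harmonic on $\Gamma\cap B_1$ in the sense of Lemma 3.8. For $v=M$ this was already established in the displayed computation preceding the corollary, using \eqref{MCOM}, optional sampling, and the moment bound $\mathbb{E}_x[|X_{\tau^\ominus_1}|^\beta]<\infty$ from \cite{BGR}. For $u$ it is immediate from the strong Markov property applied at $\tau_1^{\ominus}\wedge\kappa_\Gamma$: on $\{\tau_1^\ominus < \kappa_\Gamma\}$ we have $X_{\tau_1^\ominus\wedge\kappa_\Gamma}=X_{\tau_1^\ominus}$, while on the complement the function $x\mapsto \mathbb{E}_x[f(X_{\tau_1^\ominus})\mathbf{1}_{(\tau_1^\ominus<\kappa_\Gamma)}]$ evaluated at $X_{\kappa_\Gamma}\notin\Gamma$ vanishes, so the averaging identity holds with no surviving boundary contribution; more precisely, for $x\in\Gamma\cap B_1$ one applies the tower property over $\mathcal{F}_{\tau_1^\ominus\wedge\kappa_\Gamma}$ and uses that $\tau_1^\ominus$ restarted after an intermediate exit time cannot recreate the event. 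One should note here that $u(x_0)>0$ is guaranteed by the hypothesis, so the normalisation is well defined and finite.

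With both functions shown to be regular harmonic on $\Gamma\cap B_1$, vanishing on $\Gamma^c\cap B_1$, and agreeing at $x_0\in\Gamma\cap B_{1/2}$, Lemma 3.8 applies verbatim and yields a constant $C_1=C_1(\Gamma,\alpha)$, independent of the particular $u,v$ and hence of $f$, with $C_1^{-1}v(x)\leq u(x)\leq C_1 v(x)$ for all $x\in\Gamma\cap B_{1/2}$. Substituting the definitions of $u$ and $v$ and cancelling $M(x_0)$ gives exactly the asserted two-sided bound. The one genuinely delicate point is the regular-harmonicity of $u$: one must be careful that the killing at $\kappa_\Gamma$ interacts correctly with the first passage time $\tau_1^\ominus$, so that no boundary mass is created when the process is stopped at the minimum of the two times; this is where the precise definition of regular harmonicity on $\Gamma\cap B_1$ (stopping at $\tau_1^\ominus\wedge\kappa_\Gamma$ rather than at $\tau_1^\ominus$ alone) does the work, and everything else is bookkeeping.
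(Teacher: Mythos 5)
Your proposal is correct and takes essentially the same route as the paper: both set $u$ equal to the normalised hitting functional, note that it vanishes on $\Gamma^c\cap B_1$ and agrees with $M$ at $x_0$, verify regular harmonicity, and apply Lemma \ref{BHP}. The one small stylistic difference is that you invoke the strong Markov/tower property to check regular harmonicity of $u$, whereas this follows directly by evaluating $u$ at points of $\Gamma\cap B_1^c$ (where $\tau_1^\ominus=0$) and at points of $\Gamma^c$ (where $\kappa_\Gamma=0$), as the paper does; your argument is not wrong, just slightly more machinery than needed.
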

\begin{proof}
The result follows from Lemma \ref{L-BHP}, in particular from the inequalities \eqref{BHI}, as soon as we can verify that 
\[
g(x) := \frac{\mathbb{E}_x\left[f(X_{\tau_1^{\ominus}})\mathbf{1}_{(\tau_1^{\ominus}<\kappa_\Gamma)}\right]}{\mathbb{E}_{x_0}\left[f(X_{\tau_1^{\ominus}})\mathbf{1}_{(\tau_1^{\ominus}<\kappa_\Gamma)}\right]}  ,\qquad  x\in \mathbb{R}^d,%\Gamma \cap B_1
\] 
is regular harmonic on $\Gamma\cap B_1$. 

\smallskip

To this end, note that the function $g$ clearly vanishes on $\Gamma^C\cap B_1$ and is equal to a constant multiple of $f$ on $\Gamma\cap B_1^c$ by construction and (recalling that $M$ has been normalized so that $M(x_0) = 1$) we have $g(x_0)=M(x_0) = 1$ for some $x_0 \in\Gamma\cap B_{1/2}$. Finally, note $g(X_{\kappa_\Gamma}) = 0$ and $$g(X_{\tau^\ominus_1})\mathbf{1}_{(\tau^\ominus_1<\kappa_\Gamma)} = \frac{f(X_{\tau^\ominus_1})\mathbf{1}_{(\tau^\ominus_1<\kappa_\Gamma)}}{\mathbb{E}_{x_0}\left[f(X_{\tau_1^{\ominus}})\mathbf{1}_{(\tau_1^{\ominus}<\kappa_\Gamma)}\right]} $$ almost surely and hence, for $x\in \Gamma\cap B_1$,
\[
g(x) = \frac{\mathbb{E}_x\left[f(X_{\tau_1^{\ominus}})\mathbf{1}_{(\tau_1^{\ominus}<\kappa_\Gamma)}\right]}{\mathbb{E}_{x_0}\left[f(X_{\tau_1^{\ominus}})\mathbf{1}_{(\tau_1^{\ominus}<\kappa_\Gamma)}\right]}=\mathbb{E}_x\left[g(X_{\tau_1^{\ominus}})\mathbf{1}_{(\tau_1^{\ominus}<\kappa_\Gamma)}\right] =\mathbb{E}_x\left[g(X_{\tau_1^{\ominus}\wedge \kappa_\Gamma})\right], 
\]
as required.
\end{proof}

Let us now return to the promised proof of Corollary \ref{enhancement} which states  that we can take the limits  to the apex of $\Gamma$ in (\ref{Alsmeyer0}) in a more natural way.

%
%\begin{lemma}\label{allx}
%Suppose that \eqref{Alsmeyer0} and the conditions (I)-(II) hold, then the limit in \eqref{Alsmeyer0} also holds when the limit occurs as $\Gamma\ni x\to0$.
%\end{lemma}

\begin{proof}[Proof of Corollary \ref{enhancement}] For Borel sets $D\subseteq(0,\infty)$ and $E\in \Omega$, we have, with the help of scaling,
\begin{align}
\mathbb{P}^{\triangleleft}_{\theta}(S_1-1\in D, \Xi_1\in E)%&=  \mathbb{P}^{\triangleleft}_{\theta}(\log|X_{\tau_{{\rm e}}^{\ominus}} |\in A, \arg(X_{\tau_{{\rm e}}^{\ominus}})\in B)\\
& =\mathbb{P}^{\triangleleft}_{\theta/{\rm e}}(\log|X_{\tau_{1}^{\ominus}} |\in D, \arg(X_{\tau_{1}^{\ominus}})\in E)\notag\\
&=\frac{1}{M(\theta/{\rm e})}\mathbb{E}_{\theta/{\rm e}}\left[M(X_{\tau_1^{\ominus}})\mathbf{1}_{(\log|X_{\tau_{1}^{\ominus}} |\in D, \, \arg(X_{\tau_{1}^{\ominus}})\in E,\, \tau_1^{\ominus}<\kappa_{\Gamma})}\right].
\label{DE}
\end{align}
Note that, for $x\in\Gamma\cap B_1$, $D\in \mathcal{B}(0,\infty)$,  $E\in\mathcal{B}(\Omega),$
\[
g(x; D, E):=\mathbb{E}_{x}\left[M(X_{\tau_1^{\ominus}})\mathbf{1}_{(\log|X_{\tau_{1}^{\ominus}} |\in D, \, \arg(X_{\tau_{1}^{\ominus}})\in E,\, \tau_1^{\ominus}<\kappa_{\Gamma})}\right]
\]
is a regular harmonic function in $\Gamma\cap B_1$  (according to the definition in Lemma \ref{L-BHP}), which can be seen by applying the Strong Markov Property. Normalising $M$ so that $M(\theta_0/{\rm e}) = 1$, for some $\theta_0\in \Omega$, also tells us that 
\begin{equation}
\tilde\rho(D, E) = \mathbb{E}_{\theta_0/{\rm e}}\left[M(X_{\tau_1^{\ominus}})\mathbf{1}_{(\log|X_{\tau_{1}^{\ominus}} |\in D, \, \arg(X_{\tau_{1}^{\ominus}})\in E,\, \tau_1^{\ominus}<\kappa_{\Gamma})}\right] = g(\theta_0/{\rm e}; D, E)
\label{rho}
\end{equation}
is a probability distribution on $(0,\infty)\times\Omega$. For convenience, we shall shortly write 
$
\rho(E)
$
in place of $\tilde\rho((0,\infty), E)$ on $\Omega$.
\smallskip

Corollary \ref{BHP} now tells us that, for all $x\in \Gamma\cap B_{1/2}$,
\begin{equation}
C_1^{-1}M(x) \leq \frac{g(x; D, E)}{g(x_0; D, E)} \leq C_1M(x),
\label{rhoupper1} 
\end{equation}
where the constant $C_1\in(0,\infty)$ is universal and does not depend on the construction of $g$, nor $x_0$. Said another way, we have, for $\theta\in \Omega$ (which corresponds to $x = \theta/{\rm e}$ in \eqref{rhoupper1}), 
\begin{equation}
C_1^{-1}\tilde\rho(D, E) \leq \mathbb{P}^{\triangleleft}_{\theta}(S_1-1\in D, \Xi_1\in E)\leq \tilde\rho(D, E)C_1.
\label{rhoupper-new}
\end{equation}
\smallskip

Let us now assume that $A$ is a null set of $\upsilon^*$. From (II), we know that  ${\mathbb P}_{\upsilon^*}(\Xi_1 \in A) = \upsilon^*(A) =0$. On the one hand, from \eqref{rhoupper-new} and \eqref{DE}, we note that 
$0 = 
{\mathbb P}_{\upsilon^*}(\Xi_1 \in A) \geq C_1^{-1}\rho(A),
$ and hence that $\rho(A)=0.$ 
On the other hand, we know from \eqref{rhoupper-new} again together with the latter fact, that, ${\mathbb P}_{\theta}(\Xi_1 \in A) \leq C_1 \rho(A) =0$, for all $\theta\in\Omega$.
We have thus shown that the very first step of the process $\Xi$ positions it randomly so that it is in the concentration set of the support of $\upsilon^*$.

\smallskip

From \eqref{applyMAPRT} and \eqref{Vrenewal}, writing $I$ for the right-hand side of \eqref{Alsmeyer0}, we have, for $x\in \Gamma$ such that % $|x|$ is small,
\begin{align}
&\left| \mathbb{E}^{\triangleleft}_x\left[f(X_{\tau_1^{\ominus}},X_{\tau_1^{\ominus}-},X_{m(\tau_1^{\ominus}-)})\right]-I \right|\notag\\
&\hspace{2cm}=\left|  {\mathbb E}^{\triangleleft}_{\arg(x)}\left[\sum_{n\geq 0} 
\mathbf{1}_{(S_n\leq -\log|x|)}G(-\log|x|-S_n,\Xi_n)\right] -I \right|\notag\\
%&={\mathbb E}^{\triangleleft}_{\arg(x)}\left[ G(-\log|x|-S_0,\Xi_0)+\sum_{n\geq 1} G(-\log|x|-S_n,\Xi_n)\right] \notag\\
&\hspace{2cm}\leq  G(-\log|x|,\arg(x))+\left| {\mathbb E}^{\triangleleft}_{\arg(x)}\left[\sum_{n\geq 1} \mathbf{1}_{(S_n\leq -\log|x|)} G(-\log|x|-S_n,\Xi_n)\right] -I\right|\notag\\
&\hspace{2cm}\leq G(-\log|x|,\arg(x))+{\mathbb P}^{\triangleleft}_{\arg(x)}(S_1> -\log|x|) I\notag\\
&\hspace{3cm}+{\mathbb E}^{\triangleleft}_{\arg(x)}\left[\mathbf{1}_{(S_1\leq -\log|x|)}\left| \mathbb{E}^{\triangleleft}_{\Xi_1{\rm e}^{S_1}}\left[\sum_{n\geq 0} \mathbf{1}_{(S_n\leq -\log|x|)}G(-\log|x|-{S}_n,{\Xi}_n)\right] -I\right|\right]
%&\leq 
%(||f||_\infty +I){\mathbb P}^{\triangleleft}_{\arg(x)}(S_1> -\log|x|)\notag\\
%&\hspace{2cm}+{\mathbb E}^{\triangleleft}_{\arg(x)}\left[\mathbf{1}_{(S_1\leq -\log|x|)}\left| \mathbb{E}^{\triangleleft}_{\Xi_1{\rm e}^{S_1}}\left[\sum_{n\geq 0} G(-\log|x|-{S}_n,{\Xi}_n)\right] -I\right|\right] 
\label{twolims}
\end{align}
Taking care to note that 
\[
\mathbf{1}_{(\tau^\ominus_1\leq \tau_{{\rm e}|x|}^{\ominus})} = 
\left\{
\begin{array}{ll}
\mathbf{1}_{(\tau^\ominus_1= \tau_{{\rm e}|x|}^{\ominus})}=\mathbf{1}_{(|X_{\tau_{{\rm e}|x|}^{\ominus}}|>1)}&\text{ if }{\rm e}|x|<1\\
1&\text{ if }{\rm e}|x|\geq  1,
\end{array}
\right.
\]
we have for ${\rm e}|x|<1$ that
\begin{align*}
G(-\log|x|,\arg(x))&=\mathbb{E}^{\triangleleft}_{x}\left[f(X_{\tau_1^{\ominus}},X_{\tau_1^{\ominus}-},X_{m(\tau_1^{\ominus}-)})\mathbf{1}_{(\tau^\ominus_1\leq \tau_{{\rm e}|x|}^{\ominus})}
\right]\\
&\leq ||f||_\infty \mathbb{P}^{\triangleleft}_{x}(X_{\tau^\ominus_{{\rm e}|x|}}>1)%{\color{black}\mathbb{P}^{\triangleleft}_{x}(X_{\tau^\ominus_{1}}\leq{\rm e}|x|)}
\\
&=||f||_\infty \mathbb{P}^{\triangleleft}_{\arg(x)}(X_{\tau^\ominus_{\rm e}}>{\color{black}1/|x|}) %{\color{black}\mathbb{P}^{\triangleleft}_{\arg(x)}(X_{\tau^\ominus_{1/|x|}}\leq{\rm e})}\
\\
%&\leq ||f||_\infty \mathbb{P}^{\triangleleft}_{\arg(x)}(X_{\tau^\ominus_{\rm e}}>1/|x|)\\
&= ||f||_\infty  \mathbb{P}^{\triangleleft}_{\arg(x)}(S_1>-\log|x|).
\end{align*}
Hence, back in \eqref{twolims} we obtain 
\begin{align}
&\left| \mathbb{E}^{\triangleleft}_x\left[f(X_{\tau_1^{\ominus}},X_{\tau_1^{\ominus}-},X_{m(\tau_1^{\ominus}-)})\right]-I \right|\notag\\
&\hspace{1cm}\leq 
(||f||_\infty +I){\mathbb P}^{\triangleleft}_{\arg(x)}(S_1> -\log|x|)\notag\\
&\hspace{2cm}+{\mathbb E}^{\triangleleft}_{\arg(x)}\left[\mathbf{1}_{(S_1\leq -\log|x|)}\left| \mathbb{E}^{\triangleleft}_{\Xi_1{\rm e}^{S_1}}\left[\sum_{n\geq 0}\mathbf{1}_{(S_n\leq -\log|x|)} G(-\log|x|-{S}_n,{\Xi}_n)\right] -I\right|\right] 
\label{twolims2}
\end{align}
%{\color{black}(There is something fishy here, the event $(\tau^\ominus_1\leq \tau_{{\rm e}|x|}^{\ominus})$ can only happen when ${\rm e}|x|>1$, I guess this is not an issue, it is just saying that for some small $|x|$ some terms in the sum are zero.)}
Appealing to  \eqref{rhoupper-new} we can develop the right-hand side of \eqref{twolims2} to get 
\begin{align}
&\left|\mathbb{E}^{\triangleleft}_x\left[f(X_{\tau_1^{\ominus}},X_{\tau_1^{\ominus}-},X_{m(\tau_1^{\ominus}-)})\right]-I\right|\notag\\
&\leq C_1(||f||_{\infty}+I)\rho((-\log|x|,\infty))\notag\\
&\hspace{1cm}
+C_1\int_0^{\infty}\int_{\Omega}\tilde\rho({\rm d}r,{\rm d} \phi)
\mathbf{1}_{(r\leq -\log|x|)}
\left|\mathbb{E}^{\triangleleft}_{|x|{\rm e}^r \phi}\left[f(X_{\tau_1^{\ominus}},X_{\tau_1^{\ominus}-},X_{m(\tau_1^{\ominus}-)})\right]-I\right|,\notag
\label{twolims}
\end{align}
Using that $\tilde\rho$ is a proper distribution, dominated convergence, the fact that $
{\mathbb P}_{\theta}(\Xi_1 \in \cdot) \ll \upsilon^*(\cdot)
$
and \eqref{Alsmeyer0} gives us that 
\[
\lim_{\Gamma\ni x\to0}\mathbb{E}^{\triangleleft}_x\left[f(X_{\tau_1^{\ominus}},X_{\tau_1^{\ominus}-},X_{m(\tau_1^{\ominus}-)})\right] =  \frac{1}{\mathbb{E}^\triangleleft_{\upsilon^*}[S_1]}
\int_{\Omega}  \int_0^{\infty} \upsilon^*(\d \phi){\rm d} r \,G(r,\phi),
\]
without restriction on $\arg(x)$ in relation to $\upsilon^*$, as $\Gamma\ni x\to0$.
\end{proof}

\subsection{Verification of conditions (I) and (II)}\label{sec:HR} 
We have already verified in \eqref{rhoupper-new} that 
\begin{equation}
{\mathbb P}^{\triangleleft}_{\theta}(\Xi_1 \in E)\geq C^{-1}_1 \rho(E),
\label{rhoupper2}
\end{equation}
which implies that  \eqref{Harris2} holds, i.e. condition (I) is holds.

\smallskip

The following lemma follows directly from  \eqref{rhoupper2} and is a rewording of e.g. Theorems VII.3.2 and VII.3.6 of Asmussen \cite{AsmussenQueue}, and it addresses precisely condition (II).

\begin{lemma}\label{pi*}
Under $\mathbf{P}^\triangleleft$, $(\Xi_n, n\geq 0),$ has a stationary distribution, that is 
 \[
  \upsilon^*(\d\theta):= \lim_{n\to\infty}\mathbf{P}^\triangleleft_{0,\phi}(\Xi_n \in \d \theta), \qquad \theta\in\Omega, \phi\in \Omega, 
  \]
  exists as a non-degenerate distributional weak limit. Hence, the condition (II) is satisfied. 
%Under $\mathbb{P}^{\triagleleft}$,  has a stationary distribution, say $\upsilon^*,$ on $(\Omega, \mathcal{B}(\Omega))$, that is that there is weak  
%\[
%\lim_{n\to\infty}{\mathbb P}_\theta^\triangleleft(\Xi_n \in E)=\upsilon^*(E), \qquad E\in\mathcal{B}(\Omega),\qquad \text{for all}\ \Omega
%\]
\end{lemma}
\begin{remark}\label{upsabscts}
Note that  
\[
\int_\Omega \upsilon^*(\d\theta)\mathbb{P}_{\theta}(\Xi_n \in \d\phi, \, T_n<\kappa_\Gamma) \frac{M(\phi)}{M(\theta)}= \upsilon^*(\d\phi),\qquad %E\in\mathcal{B}(\Omega),
\]
which makes  $\upsilon^\Gamma(\d \phi) = \upsilon^*(\d \phi)/M(\phi)$, $\phi\in\Omega$, an invariant measure for the killed semigroup 
$\mathbb{P}_{\theta}(\Xi_n \in \d\phi, \, T_n<\kappa_\Gamma)$, $n\ge 0$.
\end{remark}

Note that, under the assumptions (I) and (II),  the limiting distribution \eqref{Alsmeyer0} is proper, which can be seen by taking $f =1$, in which case
\begin{equation}
\begin{split}
\label{finitepp}
\int_{\Omega}  \int_0^{\infty} \upsilon^*(\d \phi){\rm d} r \,G(r,\phi) 
&= \int_{\Omega}  \int_0^{\infty} \upsilon^*(\d \phi){\rm d} r
\mathbb{P}^{\triangleleft}_{{\rm e}^{-r} \phi}(
%\log(|X_{\tau_{{\rm e}^{1-r}}^{\ominus}}|/{\rm e}^{-r}) >
\tau^\ominus_1\leq  \tau_{{\rm e}^{1-r}}^{\ominus}
)\\
&= \int_{\Omega}  \int_0^{1} \upsilon^*(\d \phi){\rm d} r
+ \int_{\Omega}  \int_1^\infty \upsilon^*(\d \phi){\rm d} r
\mathbb{P}^{\triangleleft}_{{\rm e}^{-r} \phi}(\tau^\ominus_1= \tau_{{\rm e}^{1-r}}^{\ominus}
)\\
&= 1+\int_{\Omega}  \int_1^{\infty} \upsilon^*(\d \phi){\rm d} r\,
\mathbb{P}^{\triangleleft}_{{\rm e}^{-r}\phi}(
\log|X_{\tau_{{\rm e}^{1-r}}^{\ominus}}| > 0
)\\
&=1+ \int_{\Omega}  \int_1^{\infty} \upsilon^*(\d \phi){\rm d} r\,
\mathbb{P}^{\triangleleft}_{\phi}(
\log|{\rm e}^{-r}X_{\tau_{{\rm e}}^{\ominus}}| > 0
)\\
&=1+ \int_{\Omega}  \int_1^{\infty} \upsilon^*(\d \phi){\rm d} r\,
\mathbb{P}^{\triangleleft}_{\phi}(S_1 > r
)\\
&=1+\mathbb{E}^\triangleleft_{\upsilon^*}[S_1-1]\\
&=\mathbb{E}^\triangleleft_{\upsilon^*}[S_1]
\end{split}
\end{equation}
 and hence the limit on the right-hand side of \eqref{Alsmeyer0}
is equal to unity (for $f=1$).

\subsection{Verification of conditions (III) and (IV)} We do this with two individual lemmas. 
%for \eqref{Alsmeyer0}

\begin{lemma}\label{verifyAlsmeyer1}
Condition (III) holds, i.e. $\mathbb{E}^\triangleleft_{\upsilon^*}[S_1]<\infty$.
 %$\mathbb{E}_{\upsilon^*}[A_1]=\int_{\Omega} \upsilon^*({\rm d} \theta)\mathbb{P}^{\triangleleft}_{\theta}[A_1]<\infty$
 \end{lemma}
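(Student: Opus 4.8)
The plan is to prove the stronger assertion $\sup_{\theta\in\Omega}\mathbb{E}^\triangleleft_\theta[S_1]<\infty$, which gives Condition (III) at once since $\upsilon^*$ is a probability measure. First I would record that, by the definition of $(S_n,\Xi_n)$ and the scaling property, under $\mathbb{P}^\triangleleft_\theta$ (so $|\theta|=1$) one has $T_1=\tau^\ominus_{\rm e}$, hence $S_1=\log|X_{\tau^\ominus_{\rm e}}|\ge 1$, so that $\mathbb{P}^\triangleleft_\theta(S_1>r)=1$ for $r\le1$ and
\[
\mathbb{E}^\triangleleft_\theta[S_1]=1+\int_1^\infty\mathbb{P}^\triangleleft_\theta\big(|X_{\tau^\ominus_{\rm e}}|>{\rm e}^{r}\big)\,\d r .
\]
I would then push the change of measure \eqref{MCOM} forward to $\mathcal{F}_{\tau^\ominus_{\rm e}}$ by optional stopping; this is legitimate because $\mathbf{1}_{(t<\kappa_\Gamma)}M(X_t)$ is a martingale and $\mathbb{E}_x[M(X_{\tau^\ominus_{\rm e}})\mathbf{1}_{(\tau^\ominus_{\rm e}<\kappa_\Gamma)}]=M(x)$ (the regular harmonicity of $M$ recorded before Corollary \ref{BHP}, rescaled using the homogeneity of $M$), and it incidentally gives $\mathbb{P}^\triangleleft_\theta(\tau^\ominus_{\rm e}<\infty)=\mathbb{E}_\theta[M(X_{\tau^\ominus_{\rm e}})\mathbf{1}_{(\tau^\ominus_{\rm e}<\kappa_\Gamma)}]/M(\theta)=1$. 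One obtains
\[
\mathbb{P}^\triangleleft_\theta\big(|X_{\tau^\ominus_{\rm e}}|>{\rm e}^{r}\big)=\frac{1}{M(\theta)}\,\mathbb{E}_\theta\!\left[M(X_{\tau^\ominus_{\rm e}})\mathbf{1}_{(|X_{\tau^\ominus_{\rm e}}|>{\rm e}^{r},\,\tau^\ominus_{\rm e}<\kappa_\Gamma)}\right].
\]

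The main obstacle is that the factor $1/M(\theta)$ degenerates as $\theta\to\partial\Omega$, so this identity alone is not uniform in $\theta$; this is exactly what the boundary Harnack principle repairs. I would first apply the scaling property with scale ${\rm e}$: writing $x:=\theta/{\rm e}\in\Gamma\cap B_{1/2}$ (so $\arg(x)=\theta$) and using $M({\rm e}\,\cdot)={\rm e}^{\beta}M(\cdot)$, the right-hand side above rewrites, for $r>1$, as
\[
\mathbb{P}^\triangleleft_\theta\big(|X_{\tau^\ominus_{\rm e}}|>{\rm e}^{r}\big)=\frac{\mathbb{E}_{x}\!\left[M(X_{\tau^\ominus_1})\mathbf{1}_{(|X_{\tau^\ominus_1}|>{\rm e}^{r-1},\,\tau^\ominus_1<\kappa_\Gamma)}\right]}{M(x)} .
\]
Then I would invoke Corollary \ref{BHP} with the nonnegative test function $f(y)=M(y)\mathbf{1}_{\{|y|>{\rm e}^{r-1}\}}$, which vanishes on $\Gamma^c$ and satisfies $0<\mathbb{E}_{x_0}[f(X_{\tau^\ominus_1})\mathbf{1}_{(\tau^\ominus_1<\kappa_\Gamma)}]\le M(x_0)<\infty$ — the positivity holding because the isotropic stable process may leave $B_1$ by a single jump into $\Gamma\cap\{|y|>{\rm e}^{r-1}\}$. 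This bounds the displayed ratio, uniformly in $\theta\in\Omega$ and $r>1$, by
\[
\mathbb{P}^\triangleleft_\theta\big(|X_{\tau^\ominus_{\rm e}}|>{\rm e}^{r}\big)\le C_1\,\mathbb{E}_{x_0}\!\left[M(X_{\tau^\ominus_1})\mathbf{1}_{(|X_{\tau^\ominus_1}|>{\rm e}^{r-1},\,\tau^\ominus_1<\kappa_\Gamma)}\right]\le C_1\Big(\sup_{\mathbb{S}^{d-1}}M\Big)\,\mathbb{E}_{x_0}\!\left[|X_{\tau^\ominus_1}|^{\beta}\mathbf{1}_{\{|X_{\tau^\ominus_1}|>{\rm e}^{r-1}\}}\right].
\]

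To finish I would integrate this bound over $r\in(1,\infty)$ and apply Tonelli's theorem, using that $\int_1^\infty\mathbf{1}_{\{|X_{\tau^\ominus_1}|>{\rm e}^{r-1}\}}\,\d r=\log|X_{\tau^\ominus_1}|$ (valid since $|X_{\tau^\ominus_1}|\ge1$), to get $\int_1^\infty\mathbb{P}^\triangleleft_\theta(S_1>r)\,\d r\le C_1(\sup_{\mathbb{S}^{d-1}}M)\,\mathbb{E}_{x_0}[|X_{\tau^\ominus_1}|^{\beta}\log|X_{\tau^\ominus_1}|]$. Since $\beta<\alpha$, choosing $\beta'\in(\beta,\alpha)$ and bounding $u^{\beta}\log u\le C_{\beta'}u^{\beta'}$ for $u\ge1$, the right-hand side is at most $C_1C_{\beta'}(\sup_{\mathbb{S}^{d-1}}M)\,\mathbb{E}_{x_0}[|X_{\tau^\ominus_1}|^{\beta'}]$, which is finite by Theorem A of \cite{BGR}. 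Thus $\mathbb{E}^\triangleleft_\theta[S_1]$ is bounded by a constant independent of $\theta$, and integrating against $\upsilon^*(\d\theta)$ yields $\mathbb{E}^\triangleleft_{\upsilon^*}[S_1]<\infty$, which is Condition (III). The one genuinely delicate point in this scheme is the uniform-in-$\theta$ control of the tail of $S_1$, and it is precisely there that the boundary Harnack inequality of Corollary \ref{BHP} is indispensable.
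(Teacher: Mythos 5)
Your proof is correct and rests on the same pillars as the paper's: the martingale/optional sampling identity for $M$, the scaling reduction from $\tau^\ominus_{\rm e}$ started at $\theta$ to $\tau^\ominus_1$ started at $\theta/{\rm e}\in B_{1/2}$, and above all the boundary Harnack principle of Corollary~\ref{BHP} to obtain a bound uniform in $\theta\in\Omega$ (with the finiteness of $\mathbb{E}_{x_0}[|X_{\tau^\ominus_1}|^{\beta'}]$ supplied by Blumenthal--Getoor--Ray). The execution differs mildly in the last step: the paper applies BHP once to the single test function $y\mapsto M(y)(1+\log|y|)$ and must then verify that $\mathbb{E}_{x_0}[M(X_{\tau^\ominus_1})(1+\log|X_{\tau^\ominus_1}|)\mathbf{1}_{(\tau^\ominus_1<\kappa_\Gamma)}]<\infty$ by an explicit computation with the BGR exit density (splitting into $|y|\in(1,2)$ and $|y|>2$ to control the $(|y|^2-1)^{-\alpha/2}$ singularity). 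You instead apply BHP to the one-parameter family $f_r(y)=M(y)\mathbf{1}_{\{|y|>{\rm e}^{r-1}\}}$, for which the required integrability $\mathbb{E}_{x_0}[f_r(X_{\tau^\ominus_1})\mathbf{1}_{(\tau^\ominus_1<\kappa_\Gamma)}]\le M(x_0)=1$ is trivial, and then recover the $\log$ by Tonelli, ending with a coarser moment bound $\mathbb{E}_{x_0}[|X_{\tau^\ominus_1}|^{\beta'}]<\infty$ for some $\beta'\in(\beta,\alpha)$. This sidesteps the explicit density estimate and is a clean reorganization, at the cost of being slightly less sharp; both routes produce the uniform bound $\sup_{\theta\in\Omega}\mathbb{E}^\triangleleft_\theta[S_1]<\infty$, which is what the paper's chain of inequalities also yields, and which is stronger than the stated conclusion.
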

\begin{proof} We can appeal to the law of first exit from a sphere given in Theorem A of Blumenthal et al. \cite{BGR} to deduce that, up to constant $C$, which is irrelevant for our computations, and may take different values in each line of the below computation, we have the following inequalities
\begin{align}
&\sup_{|x|<1/2}\mathbb{E}_{x}\left[M(X_{\tau_1^{\ominus}})(1+\log|X_{\tau_1^{\ominus}}|)
\mathbf{1}_{(\tau_1^{\ominus}<\kappa_{\Gamma})}\right]\notag\\
&\leq \sup_{|x|<1/2}\mathbb{E}_{x}[M(X_{\tau_1^{\ominus}})(1+\log|X_{\tau_1^{\ominus}}|)]\notag\\
&=C\sup_{|x|<1/2}\int_{|y|>1}\d y(|1-|x|^2)^{\alpha/2}(|y|^2-1)^{-\alpha/2}M(y) \frac{1+\log|y|}{|y-x|^d}\notag\\
&\leq C\sup_{|x|<1/2}\int_{\Omega}{\rm d}\theta M(\theta) \int_{1}^\infty {\rm d} r(r^2-1)^{-\alpha/2}r^{d-1+\beta}\frac{1+\log r}{|r\theta-x|^d}\notag\\
&=C\sup_{|x|<1/2}\int_{\Omega}{\rm d}\theta M(\theta) \int_{1}^2 {\rm d} r(r^2-1)^{-\alpha/2}r^{d-1+\beta}\frac{1+\log r}{|r\theta-x|^d}\notag\\
&\qquad +C\sup_{|x|<1/2}\int_{\Omega}{\rm d}\theta M(\theta) \int_{2}^\infty {\rm d} r(r^2-1)^{-\alpha/2}r^{d-1+\beta}\frac{1+\log r}{|r\theta-x|^d}\notag\\
&=:B_1+B_2.
\label{finiteness}
\end{align}
Using that $|r\theta-x|\geq r-|x|\geq 1/2$ we can bound the first term as follows:
\begin{equation*}
\begin{split}
B_1&\leq 2^{2d-1+\beta}(1+\log 2)C\int_{\Omega}{\rm d}\theta M(\theta)\int_{1}^2 {\rm d} r(r^2-1)^{-\alpha/2}<\infty.
\end{split}
\end{equation*}
To verify that the second term in \eqref{finiteness} is finite also, we use that $|r\theta-x|\geq {3r}/{4},$ by the triangle inequality, and that necessarily $\beta<\alpha,$ to obtain that 
\begin{equation*}
\begin{split}
B_2 &\leq \left(\frac{4}{3}\right)^{d+\frac{\alpha}{2}}C\int_{\Omega}{\rm d}\theta M(\theta) \int_{2}^\infty{\rm d} r \,r^{\beta-\alpha-1}(1+\log r)<\infty,
\end{split}
\end{equation*}
We can now apply the Boundary Harnack Principle in Corollary \ref{BHP} and the scaling property to deduce that 
\begin{align*}\int_{\Omega} \upsilon^*({\rm d} \theta)\mathbb{E}^{\triangleleft}_{\theta}[S_1]&=\int_{ \Omega} \upsilon^*({\rm d} \theta)\mathbb{E}^{\triangleleft}_{\theta}[\log|X_{\tau_{\rm e}^{\ominus}}|]\\
&=\int_{\Omega} \upsilon^*({\rm d} \theta)\mathbb{E}^{\triangleleft}_{\theta/{\rm e}}[\log|X_{\tau_1^{\ominus}}|+1]\\
&=  \int_{\Omega} \upsilon^*({\rm d} \theta)\frac{\mathbb{E}_{\theta/{\rm e}}\left[\mathbf{1}_{(\tau^\ominus_1<\kappa_\Gamma)}M(X_{\tau_1^{\ominus}})(\log|X_{\tau_1^{\ominus}}|+1)\right]}{M(\theta/{\rm e})}\\
&< \int_{\Omega} \upsilon^*({\rm d} \theta) C_1\sup_{|x|<1/2}\mathbb{E}_{x}\left[\mathbf{1}_{(\tau^\ominus_1<\kappa_\Gamma)}M(X_{\tau_1^{\ominus}})(1+\log|X_{\tau_1^{\ominus}}|)\right]\\
&<\infty,
\end{align*}
where finiteness follows from \eqref{finiteness}.
\end{proof}
\begin{lemma}\label{lemma:IVholds} The conditions in (IV) holds.
\end{lemma}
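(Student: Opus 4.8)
The plan is to reduce $G(\cdot,\phi)$ to an expectation under the fixed law $\mathbb{P}^{\triangleleft}_\phi$ by self-similarity; once this is done, the summability requirement in (IV) is essentially a restatement of condition (III), already established in Lemma~\ref{verifyAlsmeyer1}, and the only substantial point is the almost-everywhere continuity of $r\mapsto G(r,\phi)$, which I would obtain by showing that this map is right-continuous. The delicate ingredient is the behaviour, as the radius varies, of the exit time $\tau^\ominus_a$ and the last radial maximum $m(\tau^\ominus_a-)$; the fact that a stable process leaves every ball by a strict overshoot is what makes this tractable.

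\textbf{Step 1 (scaling reduction).} Fix $\phi\in\Omega$, so $|\phi|=1$. Applying the self-similarity of $(X,\mathbb{P}^{\triangleleft})$ from Theorem~\ref{conditioned}(ii) with scaling factor ${\rm e}^{-r}$ (i.e.\ writing the process started from ${\rm e}^{-r}\phi$ as ${\rm e}^{-r}X_{{\rm e}^{\alpha r}\cdot}$ under $\mathbb{P}^{\triangleleft}_\phi$) and tracking exit times of balls and radial maxima through the induced time change, one obtains
\[
G(r,\phi)=\mathbb{E}^{\triangleleft}_\phi\!\left[f\big({\rm e}^{-r}X_{\tau^\ominus_{{\rm e}^{r}}},\,{\rm e}^{-r}X_{\tau^\ominus_{{\rm e}^{r}}-},\,{\rm e}^{-r}X_{m(\tau^\ominus_{{\rm e}^{r}}-)}\big)\mathbf{1}_{(\tau^\ominus_{{\rm e}^{r}}\le\tau^\ominus_{{\rm e}})}\right].
\]
For $0<r\le1$ the balls $B_{{\rm e}^{r}}\subseteq B_{\rm e}$ are concentric and both contain $\phi$, so $\tau^\ominus_{{\rm e}^{r}}\le\tau^\ominus_{\rm e}$ and the indicator equals $1$; for $r>1$ one has $\tau^\ominus_{{\rm e}^{r}}\ge\tau^\ominus_{\rm e}$ and, up to $\mathbb{P}^{\triangleleft}_\phi$-null sets, $\{\tau^\ominus_{{\rm e}^{r}}\le\tau^\ominus_{\rm e}\}=\{\tau^\ominus_{{\rm e}^{r}}=\tau^\ominus_{\rm e}\}=\{|X_{\tau^\ominus_{\rm e}}|>{\rm e}^{r}\}$, on which the entire exit configuration of $B_{{\rm e}^{r}}$ coincides with that of $B_{\rm e}$. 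Recalling from \eqref{finitepp} that $S_1=\log|X_{\tau^\ominus_{\rm e}}|$ under $\mathbb{P}^{\triangleleft}_\phi$, this yields the crude bound $G(r,\phi)\le\|f\|_\infty\,\mathbb{P}^{\triangleleft}_\phi(S_1>r)$ for all $r>1$.

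\textbf{Step 2 (a.e.\ continuity).} Here I would invoke Theorem~A of Blumenthal et al.\ \cite{BGR}: under $\mathbb{P}_\phi$, and hence under $\mathbb{P}^{\triangleleft}_\phi$ by absolute continuity of the $M$-transform stopped at $\tau^\ominus_a$, the law of $X_{\tau^\ominus_a}$ has a density carried by $\{|y|>a\}$, for every $a>1$. Two consequences: (a) for $r>1$, $\mathbb{P}^{\triangleleft}_\phi(|X_{\tau^\ominus_{\rm e}}|={\rm e}^{r})=0$, so $r\mapsto\mathbf{1}_{(|X_{\tau^\ominus_{\rm e}}|>{\rm e}^{r})}$ is $\mathbb{P}^{\triangleleft}_\phi$-a.s.\ continuous at every point, and bounded convergence gives continuity of $G(\cdot,\phi)$ on $(1,\infty)$; (b) for $r_0\in(0,1)$, since $|X_{\tau^\ominus_{{\rm e}^{r_0}}}|>{\rm e}^{r_0}$ strictly $\mathbb{P}^{\triangleleft}_\phi$-a.s., there is $\mathbb{P}^{\triangleleft}_\phi$-a.s.\ a random $\varepsilon>0$ with $\tau^\ominus_{{\rm e}^{r}}=\tau^\ominus_{{\rm e}^{r_0}}$ for every $r\in[r_0,r_0+\varepsilon)$, so the whole triple inside $f$ is a.s.\ eventually constant as $r\downarrow r_0$, and bounded convergence gives right-continuity of $G(\cdot,\phi)$ on $(0,1)$. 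Therefore $G(\cdot,\phi)$ is right-continuous on $(0,\infty)$, hence has at most countably many discontinuities and is Lebesgue-a.e.\ continuous, which is the first half of (IV).

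\textbf{Step 3 (summability) and the main obstacle.} Take $h=1$. Since $0\le G(r,\phi)\le\|f\|_\infty$, the $n=0$ term contributes at most $\|f\|_\infty$ after integrating against $\upsilon^*$; for $n\ge1$, Step~1 gives $\sup_{n<r\le n+1}G(r,\phi)\le\|f\|_\infty\,\mathbb{P}^{\triangleleft}_\phi(S_1>n)$, whence
\[
\int_\Omega\upsilon^*(\d\phi)\sum_{n\ge0}\sup_{n<r\le n+1}G(r,\phi)\ \le\ \|f\|_\infty\Big(1+\int_\Omega\upsilon^*(\d\phi)\sum_{n\ge1}\mathbb{P}^{\triangleleft}_\phi(S_1>n)\Big)\ \le\ \|f\|_\infty\big(1+\mathbb{E}^{\triangleleft}_{\upsilon^*}[S_1]\big),
\]
which is finite by Lemma~\ref{verifyAlsmeyer1}. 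This establishes both parts of (IV). The one genuinely delicate point is the right-continuity in Step~2: it relies on stable processes overshooting strictly when leaving a ball, which pins the exit configuration of $B_{{\rm e}^{r}}$ — including the last radial maximum $m(\tau^\ominus_{{\rm e}^{r}}-)$ — constant over a random right-neighbourhood of each $r$, so that only right-continuity of $G(\cdot,\phi)$ (not full continuity) is available, and that is precisely what is needed.
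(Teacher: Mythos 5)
Your proof is correct and follows essentially the same approach as the paper: the summability in Step 3 is precisely the ``straightforward modification of the computation in \eqref{finitepp}'' the paper alludes to (reducing to $f\equiv 1$, using the bound $G(r,\phi)\le\|f\|_\infty\,\mathbb{P}^{\triangleleft}_{\phi}(S_1>r)$, the monotonicity of the tail, and Lemma~\ref{verifyAlsmeyer1}). Your Step 2 fleshes out, via scaling and the atomlessness of the exit distribution from Theorem A of \cite{BGR}, the continuity statement that the paper dispatches with the phrase ``standard Skorokhod continuity properties of the stable process with killing at first passage times''; your argument obtains (right-)continuity, which gives the a.e.\ continuity required by (IV), so the slight disparity with the paper's claim of full continuity is immaterial.
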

\begin{proof}
Let $f:\Gamma^3\to [0,\infty)$ be a continuous and bounded function. On account of continuity of  $M$  and standard Skorokhod continuity properties of the stable process with killing at first passage times, together with the dominated convergence theorem, imply that for any $\phi\in \Gamma$ fixed, the function $$y\mapsto G(y,\phi):=\mathbb{E}^{\triangleleft}_{{\rm e}^{-y} \phi}\left[f(X_{\tau_1^{\ominus}},X_{\tau_1^{\ominus}-},X_{m(\tau_1^{\ominus}-)})\mathbf{1}_{(\tau^\ominus_1\leq \tau_{{\rm e}^{1-y}}^{\ominus})}\right], \qquad y>0,$$ is continuous and bounded. Since $f$ is assumed to be bounded it is enough to check that \eqref{Alsmeyer2} holds with $f\equiv 1.$ But this follows from a straightforward modification of the computation in \eqref{finitepp}, using that for any $\theta\in \Gamma$ fixed, the function $r\mapsto \mathbb{P}^{\triangleleft}_{\phi}(S_1 > r)$ is non-increasing, together with the conclusion of Lemma~\ref{verifyAlsmeyer1}.  
\end{proof}

\section{Proof of Theorem \ref{ooC}}\label{proveooC}
Let us define a new family of stopping times with respect to the filtration generated by $((H^+_t, \theta^+_t),\ t\geq 0)$. Set $\chi_0 = 0$ and 
\[
\chi_{n+1} =\inf\{s>\chi_n : H^+_s -H^+_{\chi_n}>1\}, \qquad n\geq0.
\]
We should also note that these stopping times have the property that the sequence of pairs $((S_n, \Xi_n),\ n\geq 0)$, agrees precisely with $((H_{\chi_n}, \Theta^+_{\chi_n}),\ n\geq 0)$. Moreover, it is easy to show that $((\chi_n, \Xi_n),\ n\geq 0)$, is a Markov additive process, and we known $(\Xi_n, n\geq0)$ is Harris recurrent, in the sense of (I) above. 

Let, 
\[
\mathscr{U}^\triangleleft_\theta({\rm d}s,{\rm d}\phi):=  \sum_{n\geq 0} \mathbf{P}_{\theta}^{\triangleleft} (\chi_n\in {\rm d} s,\Xi_n \in {\rm d}\phi), \qquad s\geq 0, \Omega.
\]
Appealing to the Markov property, we have, for $\Omega$ and bounded measurable $f$ on $\Omega$, 
\begin{align*}
\mathbf{E}_{0,\theta}^{\triangleleft}[f(\Theta^+_t)]
&=\mathbf{E}_{0,\theta}^{\triangleleft} \left[\sum_{n\geq 0}\mathbf{1}_{(\chi_n\leq  t< \chi_{n+1})}f(\Theta^+_t)\right] \\
%&=\mathbf{E}_{\theta}^{\triangleleft} \left[\sum_{n\geq 0}\mathbf{1}_{(\chi_n< t)}\mathbf{1}_{(t\leq \chi_{n}+\widetilde{\chi}_1)}f(\widetilde{\Theta}^+_{t-\chi_n})\right] \\
&=\mathbf{E}_{0,\theta}^{\triangleleft}\left[\sum_{n\geq 0}\mathbf{1}_{(\chi_n\leq  t)}\mathbf{E}^\triangleleft_{0,\phi}\left[\mathbf{1}_{( u <{\chi}_{1})}f(\Theta^+_{u})\right]_{\phi = \Theta^+_{\chi_n}, u = t- \chi_n}\right] \\
%&=\int_{0}^t\int_{\Omega} \mathscr{U}^\triangleleft_\theta({\rm d}s,{\rm d}\phi)\mathbf{E}^{\triangleleft}_{\phi}[\mathbf{1}_{( t-s \leq \chi_{1})}f(\Theta^+_{t-s})]\\
&=\int_{0}^t\int_{ \Omega} \mathscr{U}^\triangleleft_\theta({\rm d}s,{\rm d}\phi)F(t-s,\phi),
\end{align*}
with $F(s,\phi)=\mathbf{E}^{\triangleleft}_{0,\phi}[\mathbf{1}_{( s \leq \chi_{1})}f(\Theta^+_{s})]$ which is bounded and continuous in both its arguments.
Note, moreover, that
\begin{align*}
\int_{0}^\infty\int_{ \Omega} \upsilon^*(\d \phi)\d s\, F(s, \phi) &= \int_{0}^\infty\int_{ \Omega} \upsilon^*(\d \phi)\d s\, \mathbf{E}^{\triangleleft}_{0,\phi}[\mathbf{1}_{( s \leq \chi_{1})}f(\Theta^+_{s})]\\
&=\int_{0}^\infty\int_{ \Omega} \upsilon^*(\d \phi)\d s\, \mathbf{E}^{\triangleleft}_{0,\phi}[\mathbf{1}_{(H^+_s<1)}f(\Theta^+_{s})]\d s\\
&=\int_\Omega\int_\Omega \upsilon^*(\d \phi)U^\triangleleft_{\phi}([0,1), \d\theta)f(\theta),
\end{align*}
where $U^\triangleleft_{\phi}(\d x,\d \theta)$, $x\geq 0$, $\Omega$, is the ascending ladder MAP potential 
\[
U^\triangleleft_{\phi}(\d x,\d\theta) = \int_0^\infty \mathbf{P}^\triangleleft_{0,\phi}(H^+_{s}\in \d x, \Theta^+_s\in \d \theta)\d s.
\]
As such, whenever $f$ is bounded, we have that $\int_{0}^\infty\int_{ \Omega} \upsilon^*(\d \phi)\d s\, F(s, \phi)<\infty$.
\smallskip

We also note that 
\begin{align*}
\mathbf{E}_{0,\upsilon^*}^{\triangleleft}[\chi_1]&: = \int_\Omega\upsilon^*(\d \phi)\mathbf{E}_{0,\phi}^{\triangleleft}[\chi_1]\\
&=\int_\Omega\upsilon^*(\d \phi)\int_0^\infty \mathbf{P}_{0,\phi}^{\triangleleft}(\chi_1>t)\d t\\
&=\int_\Omega\upsilon^*(\d \phi)\int_0^\infty \mathbf{P}_{0,\phi}^{\triangleleft}(H^+_t <1)\d t\\
&=\int_\Omega \upsilon^*(\d \phi)U^\triangleleft_{\phi}([0,1), \Omega)<\infty.
\end{align*}
Arguing as in the proof of Lemma~\ref{lemma:IVholds}, it follows that whenever $f$ is continuous and bounded, the mapping $(s,\phi)\mapsto F(s, \phi),$ satisfies the conditions in Theorem 2.1 in \cite{Alsmeyer1994}. 
\smallskip

As such, and on account of the fact that $(\Xi_n, n\geq 0)$ has been proved to have a stationary distribution, $\upsilon^*$, we can again invoke the Markov additive renewal theorem \cite{Alsmeyer1994} and conclude that, for $\upsilon^*$-almost every $\Omega$,
$$\lim_{t\to\infty}\mathbf{E}_{0,\theta}^{\triangleleft}[f(\Theta^+_t)]=\frac{1}{\mathbf{E}_{0,\upsilon^*}^{\triangleleft}[\chi_1]}\int_\Omega\int_\Omega \upsilon^*(\d \phi)U^\triangleleft_{\phi}([0,1), \d\theta)f(\theta).$$
We can upgrade the previous statement to allow for all $\Omega$ by appealing to reasoning  similar in fashion to the proof of Corollary \ref{enhancement}. For the sake of brevity, we thus leave this as an exercise for the reader.

\smallskip

In conclusion, $(\Theta^+_t, t\geq 0)$ has a  non-degenerate stationary distribution, which is given by 
\[
\pi^{\triangleleft,+}(\d \theta) =\frac{\int_\Omega\int_\Omega \upsilon^*(\d \phi)U^\triangleleft_{\phi}([0,1), \d\theta)}{\int_\Omega \upsilon^*(\d \phi)U^\triangleleft_{\phi}([0,1), \Omega)}, \qquad \theta\in\Omega.
\]
as required.\hfill$\square$

\section{Proof of Theorem \ref{conditionapex}} \label{proofconditionapex}We first need a technical Lemma. Recall that $\tau^\oplus_a : = \inf\{t>0: |X_t|<a\}$, $a>0$.
\begin{lemma}\label{decay} We have the following convergence,
$$
\lim_{\Gamma\ni aKx\to0}\frac{\mathbb{P}^\triangleleft_x(\tau^\oplus_a<\infty)}{(|x|/ a)^{\alpha - 2\beta - d} }=\frac{1}{\mathbb{E}^\triangleleft_{\upsilon^*}[\log|X_{\tau^\ominus_{\rm e}}|]}
\int_{\Omega}  \int_0^{\infty} \upsilon^*(\d \phi){\rm d} r \,\mathbb{E}_{{\rm e}^{-r} \phi}^\triangleleft\left[ |X_{\tau_1^{\ominus}}|^{\alpha-2\beta-d}\right]<\infty.$$
%where, $H(x)=|x|^{\alpha-\beta-d}M(\arg(x))$. 

\end{lemma}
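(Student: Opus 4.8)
The plan is to turn the left-hand side, by means of the Riesz--Bogdan--\.Zak transform and the scaling property, into an expectation whose $0$-limit is exactly the content of Theorem~\ref{oo}. First, apply Theorem~\ref{RBSthrm} with base point $Kx$: under $\mathbb{P}_x$ the inverted, time-changed process $(KX_{\eta(t)},t\ge 0)$, with $\eta(t)=\inf\{s>0:\int_0^s|X_u|^{-2\alpha}\d u>t\}$, has law $\mathbb{P}^\circ_{Kx}$, where $\d\mathbb{P}^\circ_{Kx}/\d\mathbb{P}_{Kx}|_{\sigma(X_s:s\le t)}=|X_t|^{\alpha-d}/|Kx|^{\alpha-d}$. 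Since the time change does not alter the range of the path, and $K$ preserves arguments while carrying $B_a$ onto the complement of $B_{1/a}$, the event $\{\tau^\oplus_a<\kappa_\Gamma\}$ under $\mathbb{P}_x$ corresponds to $\{\tau^\ominus_{1/a}<\kappa_\Gamma\}$ under $\mathbb{P}^\circ_{Kx}$, so by \eqref{COM}, optional stopping and $|Kx|=1/|x|$,
\[
\mathbb{P}_x(\tau^\oplus_a<\kappa_\Gamma)=|x|^{\alpha-d}\,\mathbb{E}_{Kx}\bigl[\mathbf{1}_{(\tau^\ominus_{1/a}<\kappa_\Gamma)}|X_{\tau^\ominus_{1/a}}|^{\alpha-d}\bigr].
\]
By scaling \eqref{1/a} (the cone being scale invariant) the right-hand side is $|x|^{\alpha-d}a^{d-\alpha}\mathbb{E}_{aKx}[\mathbf{1}_{(\tau^\ominus_1<\kappa_\Gamma)}|X_{\tau^\ominus_1}|^{\alpha-d}]$; and since $M$ is regular harmonic on $\Gamma\cap B_1$ with $M(X_{\,\cdot\,\wedge\tau^\ominus_1\wedge\kappa_\Gamma})$ uniformly integrable under $\mathbb{P}_{z}$ (as $M(X_{\tau^\ominus_1})\le C|X_{\tau^\ominus_1}|^\beta\in L^1$, cf.\ the remarks before Corollary~\ref{BHP}), the $M$-transform \eqref{MCOM} gives $\mathbb{E}_{z}[\mathbf{1}_{(\tau^\ominus_1<\kappa_\Gamma)}|X_{\tau^\ominus_1}|^{\alpha-d}]=M(z)\,\mathbb{E}^\triangleleft_{z}[|X_{\tau^\ominus_1}|^{\alpha-d}/M(X_{\tau^\ominus_1})]$ with $z=aKx$. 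Substituting $M(z)=|z|^\beta M(\arg z)$, $|z|=a/|x|$, $\arg z=\arg x$ and $H(x)=|x|^{\alpha-\beta-d}M(\arg x)$, every power of $a$ and $|x|$ and the factor $M(\arg x)$ cancel, leaving the exact identity, valid for $|aKx|<1$,
\[
\frac{\mathbb{P}_x(\tau^\oplus_a<\kappa_\Gamma)}{H(x)\,a^{d+\beta-\alpha}}=\mathbb{E}^\triangleleft_{aKx}\!\left[\frac{|X_{\tau^\ominus_1}|^{\alpha-d}}{M(X_{\tau^\ominus_1})}\right].
\]

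As $\Gamma\ni aKx\to 0$, the right-hand side is the limit treated in Theorem~\ref{oo} for $f(y_1,y_2,y_3)=|y_1|^{\alpha-d}/M(y_1)$, and it equals the right-hand side of the lemma (the indicator $\mathbf{1}_{(\tau^\ominus_1\le\tau^\ominus_{{\rm e}^{1-r}})}$ of Theorem~\ref{oo} belonging inside $G(r,\phi)$). The only nontrivial point is that this $f$ is continuous on $\Gamma$ but \emph{unbounded}, since $1/M$ blows up near $\partial\Gamma$. I would handle this by truncation: Theorem~\ref{oo} applies verbatim to $f_N:=f\wedge N$, giving the lower bound after $N\to\infty$ by monotone convergence, and for the matching upper bound it suffices that $\sup_{|z|<1/2}\mathbb{E}^\triangleleft_z[f(X_{\tau^\ominus_1})\mathbf{1}_{(f(X_{\tau^\ominus_1})>N)}]\to0$. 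This follows from the Boundary Harnack Principle (Corollary~\ref{BHP}) applied to the nonnegative function $y\mapsto|y|^{\alpha-d}\mathbf{1}_{\{|y|^{\alpha-d}/M(y)>N\}}$, which bounds the supremum over $z$ by a constant times the value at the reference point $x_0$, after which dominated convergence with the Blumenthal--Getoor--Ray first-exit density, as in \eqref{finiteness}, finishes it. The same Corollary~\ref{BHP} estimate applied with $y\mapsto|y|^{\alpha-d}$ gives $\sup_{|z|<1/2}\mathbb{E}^\triangleleft_z[|X_{\tau^\ominus_1}|^{\alpha-d}/M(X_{\tau^\ominus_1})]<\infty$, which yields the finiteness asserted in the lemma.

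\emph{Main obstacle.} The algebra above is routine; the substantive work is (i) matching events and normalising constants correctly through the two successive changes of measure $\mathbb{P}\to\mathbb{P}^\circ\to\mathbb{P}^\triangleleft$, and (ii) the integrability and uniformity problem caused by the singularity of $|y|^{\alpha-d}/M(y)$ on $\partial\Gamma$ when invoking Theorem~\ref{oo} beyond its stated hypotheses. Point (ii) is the real difficulty, and it is precisely the Boundary Harnack Principle that resolves it by making both the truncation error and the finiteness bound uniform in the starting point.
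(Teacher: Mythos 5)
Your proof follows the paper's route exactly: Riesz--Bogdan--\.Zak to invert space, scaling with $c=a$ to reduce to $\tau^\ominus_1$, the $M$-transform \eqref{MCOM} to pass to $\mathbb{P}^\triangleleft$, the explicit cancellation to the normaliser $H(x)\,a^{d+\beta-\alpha}$, and Theorem~\ref{oo} with $f(y)=|y|^{\alpha-d}/M(y)$ to take the $aKx\to 0$ limit. The algebra matches the paper line by line (the paper writes $|X_{\tau^\ominus_1}|^{\alpha-d-\beta}/M(\arg X_{\tau^\ominus_1})$, which is the same thing).

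Where you go beyond the paper, and rightly so: Theorem~\ref{oo} as stated requires $f$ continuous and \emph{bounded} on $\Gamma^3$, but $y\mapsto|y|^{\alpha-d}/M(y)$ is unbounded because $M(\arg y)\to 0$ at $\partial\Omega$. The paper invokes Theorem~\ref{oo} with this $f$ (adding the indicator $|y|\ge 1$ to tame $|y|^{\alpha-d}$) without comment on the $1/M$ singularity. Your truncation scheme correctly fills this: $f_N = f\wedge N$ gives the $\liminf$ bound by monotone convergence on the $r$-integral, while the $\limsup$ reduces to $\sup_{|z|<1/2}\mathbb{E}^\triangleleft_z[f(X_{\tau^\ominus_1})\mathbf 1_{(f>N)}]\to 0$, which after undoing the $M$-transform is $\sup_{|z|<1/2}M(z)^{-1}\mathbb{E}_z[|X_{\tau^\ominus_1}|^{\alpha-d}\mathbf 1_{(\cdot)}\mathbf 1_{(\tau^\ominus_1<\kappa_\Gamma)}]$; Corollary~\ref{BHP} bounds this by $C_1\mathbb{E}_{x_0}[\cdots]$, and dominated convergence (dominant $|X_{\tau^\ominus_1}|^{\alpha-d}\le 1$ since $\alpha<d$) finishes it. This is exactly the missing justification, and the same BHP estimate with $\mathbf 1$ in place of $\mathbf 1_{(f>N)}$ gives finiteness of the limit. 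You also correctly flag that the indicator $\mathbf 1_{(\tau^\ominus_1\le\tau^\ominus_{{\rm e}^{1-r}})}$ from the definition of $G$ in Theorem~\ref{oo} is implicitly present on the right-hand side of the lemma (without it the $r$-integral diverges), which is an oversight in the statement as printed. In short: same proof, with the boundedness gap in the application of Theorem~\ref{oo} correctly identified and closed via the boundary Harnack principle.
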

\begin{proof} {\color{black}We first use properties from the Riesz--Bogdan--\.Zak transform in Theorem \ref{RBSthrm} and the scaling properties \eqref{1/a} and  \eqref{stopscale} (employed similarly for $\tau^{\ominus}_{1/a}$) to deduce that 
\begin{align*}
\mathbb{P}^\triangleleft_x(\tau^\oplus_a <\infty)&=\mathbb{E}_x\left[ \frac{M(X_{\tau^\oplus_a})}{M(x)};\tau^\oplus_a<\kappa_\Gamma\right]\\
& =\mathbb{E}^\circ_{Kx} \left[\frac{M(K X_{\tau^\ominus_{1/a}})}{ M(\arg(x))|x|^\beta} ;\tau^\ominus_{1/a}<\kappa_\Gamma\right]\\
&=\mathbb{E}_{Kx}\left[ \frac{ |X_{\tau^\ominus_{1/a}}|^{\alpha-\beta -d }M(\arg(X_{\tau^\ominus_{1/a}} ))}{ |x|^{-(\alpha-\beta -d)}M(\arg(x))};\tau^\ominus_{1/a} <\kappa_\Gamma\right]\\
&=\mathbb{E}_{aKx}\left[ \frac{ |X_{\tau^\ominus_{1}} /a|^{\alpha-\beta -d }M(\arg(X_{\tau^\ominus_{1}} ))}{ |x|^{-(\alpha-\beta -d)}M(\arg(x))};\tau^\ominus_{1} <\kappa_\Gamma\right]\\
&=\frac{|x|^{\alpha - 2\beta-d}}{a^{\alpha-2\beta-d}}\mathbb{E}_{aKx}^\triangleleft\left[ |X_{\tau^\ominus_{1}} |^{\alpha-2\beta -d }\right]. 
\end{align*}

Using Theorem \ref{oo} with $f(x) = |x|^{\alpha-2\beta -d}\mathbf{1}_{(|x|\geq 1 )}$, we thus have that 
\begin{align}
\lim_{\Gamma\ni aKx\to0}&
\frac{\mathbb{P}^\triangleleft_x(\tau^\oplus_a <\infty)}{(|x|/a)^{\alpha-2\beta-d}}\notag\\
&=\lim_{\Gamma\ni aKx\to0}\mathbb{E}^\triangleleft_{aKx}\left[ |X_{\tau^\ominus_{1}}|^{\alpha-2\beta-d}\right]\notag\\
&=\frac{1}{\mathbb{E}^\triangleleft_{\upsilon^*}[\log|X_{\tau^\ominus_{\rm e}}|]}
\int_{\Omega}  \int_0^{\infty} \upsilon^*(\d \phi){\rm d} r \,\mathbb{E}_{{\rm e}^{-r} \phi}^\triangleleft\left[ |X_{\tau_1^{\ominus}}|^{\alpha-2\beta - d} \mathbf{1}_{(\tau^\ominus_1\leq \tau_{{\rm e}^{1-r}}^{\ominus})}\right]<\infty
  \label{bounded}
\end{align}
where we have used that $\alpha-2\beta -d<0$,  $|X_{\tau^\ominus_1}|\geq 1$ and Theorem~\ref{oo}  with $f(x) = |x|^{\alpha-2\beta - d}\mathbf{1}_{(|x|\geq 1 )}$, for $x\in\Gamma$. %Note, in particular, that the limit does not depend on the starting point.
The result now follows.
}
\end{proof}

{\color{black}
Returning now to the proof of Theorem \ref{conditionapex},  the usual application of the strong Markov property means we need to  evaluate, for $x\in\Gamma$,
\begin{equation} \mathbb{P}^{\triangleright}_x(A, \, t<\kappa_{\{0\}}) =\lim_{a\to0} \mathbb{E}^{\triangleleft}_x\left[\mathbf{1}_{(A,\, t< \tau^\oplus_a)}\frac{\mathbb{P}^\triangleleft_{X_t}(\tau^\oplus_a <\infty)}{\mathbb{P}^\triangleleft_x(\tau^\oplus_a<\infty)}\right], 
\label{useDCT}
\end{equation}
where $A\in \mathcal{F}_t$. 
In order to do so, we first note from Lemma \ref{decay} that, 
\[
\lim_{a\to0}\frac{\mathbb{P}^\triangleleft_{X_t}(\tau^\oplus_a <\kappa_\Gamma)}{\mathbb{P}^\triangleleft_x(\tau^\oplus_a<\kappa_\Gamma)} = \frac{|X_t|^{\alpha -2\beta-d}}{|x|^{\alpha - 2\beta-d}}.
\]

Moreover, from \eqref{bounded},  we also see that, for each $\varepsilon>0$, there exists a constant $\Delta>0$ such that, when 
$
|aKx| = (a/|x|)<\Delta,
$ 
%\[
%(|x|/a)^{\alpha-\beta-d}M(\arg(x)) <1
%\]
%\[
%\frac{\mathbb{P}_x(\tau^\oplus_a <\kappa_\Gamma)}{M(\arg(x))(|x|/a)^{\alpha-\beta-d}}
%\]
% such that 
\[
(1-\varepsilon)C_1\leq  \frac{\mathbb{P}^\triangleleft_x(\tau^\oplus_a <\infty)}{(|x|/a)^{\alpha-2\beta-d}}\leq (1+\varepsilon)C_1.
\] With $C_1$ as in the previous Lemma. 
%so that, for an appropriate $C>0$, 
%\[
%C^{-1} (a/|x|)^{\beta + d-\alpha}M(\arg(x))\leq \mathbb{P}_x(\tau^\oplus_a<\kappa_\Gamma)\leq C (a/|x|)^{\beta + d-\alpha}M(\arg(x))
%\]
On the other hand, if $(a/|x|)\geq \Delta$, then 
%$(|x|/a)^{\alpha-\beta-d}M(\arg(x)) \geq 1$, then 
\[
\mathbb{P}^\triangleleft_x(\tau^\oplus_a <\infty) \leq 1 \leq (a/|x|)^{2\beta+d-\alpha}\Delta^{\alpha-2\beta-d}.
\]
From this we conclude that there is an appropriate choice of constant $C$ such that for $a\ll1$,
\begin{equation}\label{bound}
\frac{\mathbb{P}^\triangleleft_{X_t}(\tau^\oplus_a <\infty)}{\mathbb{P}^\triangleleft_x(\tau^\oplus_a <\infty)} \leq C|X_t|^{\alpha-2\beta-d}.
\end{equation}
We want to show that 
\begin{align}
\mathbb{E}^{\triangleleft}_x[|X_t|^{\alpha -2\beta-d}]
&=\mathbb{E}_x\left[\frac{M(\arg(X_t))}{M(x)}|X_t|^{\alpha -\beta-d}\mathbf{1}_{(t<\kappa_\Gamma)}\right] \notag\\
&\leq \frac{C}{M(x)}\mathbb{E}_x\left[|X_t|^{\alpha -\beta-d}\mathbf{1}_{(t<\kappa_\Gamma)}\right]<\infty.
\label{DCTuse}
\end{align} 

To this end, we note that, since $\alpha -\beta-d<0$,  $\mathbb{E}_x[|X_t|^{\alpha -\beta-d}\mathbf{1}_{(|X_t|
\geq 1, \, t<\kappa_\Gamma)}]\leq 1$.
The problem thus lies in showing that 
$
\mathbb{E}_x[|X_t|^{\alpha  -\beta-d}\mathbf{1}_{(|X_t|< 1, \, t<\kappa_\Gamma)}]<\infty.
$
To this end, let us recall from Bogdan et al. \cite{BPW} that $(X^\Gamma_t, t\geq 0)$, the stable process killed on exiting $\Gamma$, has a semigroup density, say $p^\Gamma_t(x,y)$, $x,y\in\Gamma$. Moreover, in equation (10) and (53) of the aforesaid reference, they showed that 
\[
p^\Gamma_t(x,t)
% \approx \mathbb{P}^\Gamma_x(\kappa_\Gamma>t) \mathbb{P}^\Gamma_y(\kappa_\Gamma>t)
%p_t(x,y)
\approx 
\mathbb{P}^\Gamma_x(\kappa_\Gamma>t) \mathbb{P}^\Gamma_y(\kappa_\Gamma>t) \left(t^{1/\alpha}\wedge\frac{t}{|y|^{\alpha+d}} \right), \qquad x,y\in\Gamma,
\]
where $p^{\Gamma}_t(x,y)$ is the transition density of $(X, \mathbb{P}^{\Gamma})$ and $f(x,t)\approx g(x,t)$ means that, uniformly in the domains of $f$ and $g$, there exists a constant $c>0$ such that $c^{-1}\leq f/g\leq c$.
It thus follows that 
\begin{align*}
\mathbb{E}_x&[|X_t|^{\alpha  -\beta-d}\mathbf{1}_{(|X_t|< 1, \, t<\kappa_\Gamma)}]\\
&\leq C\mathbb{P}^\Gamma_x(\kappa_\Gamma>t) \int_{|y|\leq 1}\mathbb{P}^\Gamma_y(\kappa_\Gamma>t) \left(t^{1/\alpha}\wedge\frac{t}{|y|^{\alpha+d}} \right)|y|^{\alpha-\beta-d}\d y\\
&\leq C t^{1/\alpha}\mathbb{P}^\Gamma_x(\kappa_\Gamma>t) \int_{0}^1
r^{\alpha-\beta-1}\d r <\infty,
\end{align*}
 using $\alpha >\beta$, where the  constant $C$ has a different value in each line of the calculation above, but otherwise is unimportant.
\smallskip

The bound \eqref{bound} and the finite moment \eqref{DCTuse} can now be used in conjunction with  the Dominated Convergence Theorem (for the inner limit) followed by the Monotone Convergence Theorem (for the outer limit) in \eqref{useDCT} to deduce 
\begin{align}
\mathbb{P}^{\triangleright}_x(A, \, t<\kappa_{\{0\}})
&=\mathbb{E}^\triangleleft_x\left[
\mathbf{1}_{A}\frac{|X_t|^{\alpha -2\beta-d}}{|x|^{\alpha - 2\beta-d}}%\frac{H(X_t)}{H(x)}
\right]=\mathbb{E}_x\left[
\mathbf{1}_{(A, \, t<\kappa_\Gamma)}\frac{H(X_t)}{H(x)}%\frac{H(X_t)}{H(x)}
\right], \qquad t\geq 0,
\label{COMH2}
\end{align}
with $H(x) = |x|^{\alpha - \beta -d}M(\arg(x))$ as required.
\smallskip

We must also show that this process is continuously absorbed at $0$.  Noting that $H(x) = |x|^{\alpha - d}M(Kx)$, applying the Riesz--Bogdan--\.Zak transform (cf. Theorem \ref{RBSthrm}), for continuous and bounded $f:\mathbb{R}^d\times\mathbb{R}^d\to[0,\infty)$ and $0<a<|x|$,

\begin{align*}
\mathbb{E}^\triangleright_x[f(X_{\underline{m}(\tau^\oplus_a-)}, X_{\tau^\oplus_a})]& 
=\mathbb{E}^\circ_x\left[f(X_{\underline{m}(\tau^\oplus_a-)}, X_{\tau^\oplus_a})
 \mathbf{1}_{(\tau^\oplus_a<\kappa_\Gamma)}\frac{M(KX_{\tau^\oplus_a})}{M(Kx)}\right]\\
 & 
=\mathbb{E}_{Kx}\left[f(KX_{\overline{m}(\tau^\ominus_{1/a}-)}, KX_{\tau^\ominus_{1/a}})
 \mathbf{1}_{(\tau^\ominus_{1/a}<\kappa_\Gamma)}\frac{M(X_{\tau^\ominus_{1/a}})}{M(Kx)}\right]\\
 &=\mathbb{E}^\triangleleft_{Kx}\left[f(KX_{\overline{m}(\tau^\ominus_{1/a}-)}, KX_{\tau^\ominus_{1/a}})
\right],
\end{align*}
where, for $a>0$, $\underline{m}(\tau^\oplus_{a}-) = \sup\{t<\tau^\oplus_{a} : |X_t| = \inf_{s<t}|X_s|\}$ and 
$\overline{m}(\tau^\ominus_{a}-) = \sup\{t<\tau^\ominus_{a} : |X_t| = \inf_{s<t}|X_s|\}$.
From Theorem \ref{oo}  it follows that the limit on the right-hand side above is equal to $f(0,0)$. This shows $(X,\mathbb{P}^{\triangleright}_x)$, $x\in\Gamma$  is almost surely absorbed continuously at 0.

\smallskip

Finally, reconsidering the proof of Proposition \ref{jumprate}, the remaining statement is straightforward to prove in the same way. This concludes the proof of Theorem \ref{conditionapex}.
}
\hfill$\square$

\section{Proof of Theorem \ref{BZcone}}\label{proofBZcone} It turns out more convenient to prove Theorems \ref{BZcone} 
before we deal with Theorem \ref{0}.  Indeed, it will play a crucial role in its proof.

\subsection{Proof of Theorem \ref{BZcone} (i)} We can verify the statement of this part of the theorem by first noting that the transformation $(KX_{\eta(t)}, t\geq 0)$ maps $(X, \mathbb{P}^\triangleleft)$, to a new self-similar process. Then we verify it has the transitions of $(X, \mathbb{P}^\triangleright)$.

\smallskip

For the first of the aforesaid, we refer back to the Lamperti--Kiu transform. As already observed in Alili et al. \cite{ACGZ} and Kyprianou \cite{ALEAKyp}, from the Lamperti--Kiu representation of $(X, \mathbb{P}^\triangleleft)$, 
\[
KX_{\eta(t)} = {\rm e}^{-\xi_{\varphi\circ\eta(t)}}\Theta_{ \varphi\circ\eta(t) }
\qquad t\geq0.
\]
Note however that 
\[
\int_0^{\varphi(t)}{\rm e}^{\alpha{\xi}_{s}}{\rm d}s = t \text{ and  }
\int_0^{\eta(t)} {\rm e}^{-2\alpha{\xi}_{\varphi(u)}}
{\rm d}u = t,
\qquad t\geq0.
\]
A straightforward differentiation of the last two integrals shows  that, respectively, 
\[
\frac{{\rm d}\varphi(t)}{{\rm d}t} = {\rm e}^{-\alpha {\xi}_{\varphi(t)}}\text{ and }
\frac{{\rm d}\eta(t)}{{\rm d}t} ={\rm e}^{2\alpha {\xi}_{\varphi\circ\eta(t)}}, \qquad t\geq0,
\]
and so 
the chain rule now tells us 
\begin{equation}
\frac{{\rm d}(\varphi\circ\eta)(t)}{{\rm d}t} = \left.\frac{{\rm d}\varphi(s)}{{\rm d}s}\right|_{s = \eta(t)}\frac{{\rm d}\eta(t)}{{\rm d}t}  = {\rm e}^{\alpha {\xi}_{\varphi\circ\eta(t)}},
\label{repeatinddim}
\end{equation}
and hence,
$
\varphi\circ\eta(t) = \inf\left\{s>0: \int_0^s {\rm e}^{-\alpha \xi_u}\d u >t\right\}.
$
It is thus clear that $(KX_{\eta (t)}, t\geq 0)$ is a self-similar Markov process with underlying MAP equal to $(-\xi, \Theta)$. 

\smallskip

To verify it has the same transitions as $(X, \mathbb{P}^\triangleright)$, we note that $(\eta(t),\ t\geq 0)$, is a sequence of stopping times
{\color{black}\begin{align*}
\mathbb{E}^{\triangleleft}_{Kx}&[f(KX_{\eta(t)})] \\
&=  \mathbb{E}_{Kx}\left[\frac{M(X_{\eta(t)})}{M(Kx)}f(KX_{\eta(t)});\,\eta(t)<\kappa_\Gamma\right] \\
& =\mathbb{E}_{Kx}\left[\frac{|KX_{\eta(t)}|^{-\beta}M(\arg(KX_{\eta(t)}))}{|x|^{-\beta}M(\arg(x))}f(KX_{\eta(t)});\, \eta(t)<\kappa_\Gamma\right] \\
& =\mathbb{E}_{x}\left[\frac{|X_t|^{\alpha-d}}{|x|^{\alpha-d}}\frac{|X_{t}|^{-\beta}M(\arg(X_{t}))}{|x|^{-\beta}M(\arg(x))}f(X_{t}); t<\kappa_\Gamma\right] \\
& =\mathbb{E}_{x}\left[\frac{|X_{t}|^{\alpha-d-\beta}M(\arg(X_{t}))}{|x|^{\alpha-d-\beta}M(\arg(x))}f(X_{t}); t<\kappa_\Gamma\right]\\
&=\mathbb{E}^\triangleright_{x}\left[f(X_{t})\right]
\end{align*}
where in the third equality we have applied the regular Riesz--Bogdan--\.Zak transform (cf. Theorem \ref{RBSthrm}) and in the final equality we have used 
Theorem \ref{conditionapex}.\hfill$\square$
}

\subsection{Proof of Theorem \ref{BZcone} (ii)}  The proof of this part appeals to Theorem 3.5 of Nagasawa \cite{Nagasawa1964}. The aforesaid classical result gives directly the conclusion of part (ii) as soon as a number of conditions are satisfied. Most of the conditions are trivially satisfied thanks to the fact that $(X, \mathbb{P}^\triangleleft)$,  is a regular Markov process (see for example the use of this Theorem in Bertoin and Savov \cite{BS} or D\"oring and Kyprianou \cite{DK}). However the two most important conditions stand out as non-trivial and require verification here. 

\smallskip

In the current context, the first condition requires the existence of a sigma-finite measure $\mu$ such that the duality relation is satisfied,  for any $f,g:\Gamma\to\mathbb{R}$ measurable and bounded, one has
\begin{equation}
\int_{\Gamma}\mu(\d x)f(x)\int_{\Gamma}\d y\,  p^\triangleleft_t(x,y)g(y) =\int_{\Gamma}\mu(\d x)g(x)\int_{\Gamma}\d y \, p^\triangleright_t(x,y)f(y),
\qquad \forall t\geq 0,
\label{switching}
\end{equation}
and the second requires that 
\begin{equation}
\mu(\d x) = G^\triangleleft(0,\d x): = \int_0^\infty \mathbb{P}_0^\triangleleft(X_t \in\d x)\d t, \qquad x\in\Gamma.
\label{referencemeasure}
\end{equation}
Our immediate job is thus to understand the analytical shape of the measure $\mu$.
To this end,  we prove the following intermediary result, the conclusion of which automatically deals with \eqref{referencemeasure}.
\begin{lemma}\label{excursionpotential}
We have for bounded and  measurable  $f:\Gamma\to[0,\infty)$, which is compactly supported in $\Gamma$,  up to a multiplicative constant, 
\[
\int_\Gamma f(x)G^\triangleleft(0,\d x) = \int_\Gamma f(x)M(x)H(x)\d x.
\]
\end{lemma}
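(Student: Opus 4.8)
The plan is to compute $G^\triangleleft(0,\d x)$ explicitly from the entrance law $(n_t(y)\d y,t>0)$ of Theorem~\ref{entrance} and then to identify the resulting density by means of the Riesz--Bogdan--\.Zak transform. Since $f\ge 0$, Tonelli's theorem together with $\mathbb{P}^\triangleleft_0(X_t\in\d y)=CM(y)n_t(y)\d y$ gives
\[
\int_\Gamma f(x)\,G^\triangleleft(0,\d x)=C\int_\Gamma f(x)M(x)\Lambda(x)\,\d x,\qquad \Lambda(x):=\int_0^\infty n_t(x)\,\d t ,
\]
so it suffices to prove $\Lambda=c\,H$ on $\Gamma$ for some $c>0$. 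The scaling relation $n_t(y)=t^{-(d+\beta)/\alpha}n_1(t^{-1/\alpha}y)$ from \eqref{scalen} and the substitution $t=c^{\alpha}s$ show $\Lambda(cx)=c^{\alpha-\beta-d}\Lambda(x)$, so $\Lambda$ is homogeneous of the same degree $\alpha-\beta-d$ as $H$; the same change of variables gives $\Lambda(x)=\alpha|x|^{\alpha-\beta-d}\int_0^\infty\rho^{\,d+\beta-\alpha-1}n_1(\rho\arg x)\,\d\rho$, and this integral converges because $n_1(\rho\theta)\asymp\rho^{\beta}$ as $\rho\downarrow 0$ (by \eqref{scalen} and Proposition~\ref{BPWprop}, using $d+2\beta-\alpha>d-\alpha>0$) while $n_1(\rho\theta)\lesssim\rho^{-(d+\alpha)}$ as $\rho\uparrow\infty$ and $\beta<\alpha$. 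Hence $\Lambda$ is finite, positive and homogeneous, and it remains to identify it up to a constant.

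Next I would express $\Lambda$ as a boundary limit of the killed potential density $u_\Gamma$. From \eqref{P00} and Proposition~\ref{BPWprop} one has $n_t(y)=C^{-1}\lim_{\Gamma\ni z\to 0}p^\Gamma_t(z,y)/M(z)$; using the symmetry $p^\Gamma_t(z,y)=p^\Gamma_t(y,z)$ of the killed semigroup, integrating over $t$, and letting $z\to 0$ along a fixed ray $\mathbb{R}_+\theta_1$, $\theta_1\in\Omega$, I claim
\[
C\,\Lambda(y)=\lim_{z\to 0,\ \arg z=\theta_1}\frac{u_\Gamma(z,y)}{M(z)},\qquad y\in\Gamma .
\]
The only delicate point here is the interchange of the limit with the time integral. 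Along a fixed ray the bound $\mathbb{P}_z(\kappa_\Gamma>t)/M(z)\le C'|z|^{\beta}M(z)^{-1}t^{-\beta/\alpha}=C'M(\theta_1)^{-1}t^{-\beta/\alpha}$ (from the proof of Theorem~\ref{conditioned}(i)) has a constant that does not degenerate, and combined with the two-sided heat-kernel estimate for $p^\Gamma_t$ recalled in Section~\ref{proofconditionapex} (to control $t\downarrow 0$) and the self-similar scaling of $p^\Gamma_t$ and $M$ (to control $t\to\infty$, through the power law of $n_t$) it produces a $t$-integrable majorant; splitting $\int_0^\infty=\int_0^T+\int_T^\infty$, dominated convergence then applies on each piece.

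Finally I would identify this limit with $H$. Since $K\Gamma=\Gamma$, applying the Riesz--Bogdan--\.Zak transform of Theorem~\ref{RBSthrm} to the process killed on leaving $\Gamma$ and comparing occupation measures (the time change $\eta$ contributes the weight $|X_s|^{-2\alpha}\d s$, $\mathbb{P}^{\circ}$ is the $|\cdot|^{\alpha-d}$-transform, and one substitutes $y\mapsto Ky$) yields the Kelvin identity $u_\Gamma(z,y)=|z|^{\alpha-d}|y|^{\alpha-d}u_\Gamma(Kz,Ky)$ for $z,y\in\Gamma$. Inserting this into the limit above and using $M(z)=|z|^{\beta}M(\theta_1)$ for $\arg z=\theta_1$ together with $|y|^{\alpha-d}M(Ky)=H(y)$ gives
\[
\frac{u_\Gamma(z,y)}{M(z)}=\frac{H(y)}{M(\theta_1)}\,|z|^{\alpha-\beta-d}u_\Gamma(Kz,y_0)\cdot\frac{u_\Gamma(Kz,Ky)}{M(Ky)\,u_\Gamma(Kz,y_0)} .
\]
As $z\to 0$ along $\theta_1$ we have $Kz\to\infty$, so by the definition $M(w)=\lim_{|v|\to\infty}u_\Gamma(v,w)/u_\Gamma(v,y_0)$ the last factor tends to $1$; hence the right-hand side equals $M(\theta_1)^{-1}H(y)\,g(z)\,(1+o(1))$ with $g(z):=|z|^{\alpha-\beta-d}u_\Gamma(Kz,y_0)$ \emph{independent of $y$}. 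Comparing with $u_\Gamma(z,y)/M(z)\to C\Lambda(y)<\infty$ forces $g(z)$ to converge and $\Lambda(y)/H(y)$ to be constant in $y$; thus $\Lambda=c\,H$ and $G^\triangleleft(0,\d x)=Cc\,M(x)H(x)\,\d x$, as asserted. The step I expect to be the real obstacle is the dominated-convergence argument of the second paragraph — showing that the normalised boundary limit of $u_\Gamma(z,\cdot)/M(z)$ is genuinely the time-integral of the entrance law and not merely an upper bound for it — for which the sharp heat-kernel bounds of \cite{BPW}, the self-similar scaling, and the device of taking the limit along a fixed ray (so that the constants do not blow up near the lateral boundary of $\Gamma$) all seem to be needed.
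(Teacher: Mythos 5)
Your proof is correct, and it departs from the paper's in the key identification step. Both arguments open the same way: write $G^\triangleleft(0,\d x)=CM(x)\Lambda(x)\,\d x$ with $\Lambda(x)=\int_0^\infty n_t(x)\,\d t$, express $n_t$ as a normalised boundary limit of the killed kernel, and use dominated convergence to pull the limit through the time integral (the paper splits at $t=1$ and invokes the two-sided heat-kernel bound from \cite{BPW}; your device of restricting to a fixed ray addresses precisely the non-uniformity of the constant $M(\arg z)^{-1}$ near the lateral boundary, which is a real issue and worth flagging). Where the two arguments genuinely diverge is the final step: the paper simply cites Lemma 7 of Bogdan et al.\ \cite{BPW}, which already computes $\lim_{x\to 0}\int_\Gamma f(y)M(y)G^\Gamma(x,y)\,\d y\,/\,M(x)$, whereas you reprove that identification from first principles. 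You first establish the finiteness and degree-$(\alpha-\beta-d)$ homogeneity of $\Lambda$ directly from the scaling relation \eqref{scalen} (which the paper does not need to do, since it outsources the computation), and then deduce $\Lambda=cH$ by pairing the Kelvin identity $u_\Gamma(z,y)=|z|^{\alpha-d}|y|^{\alpha-d}u_\Gamma(Kz,Ky)$ — a consequence of the Riesz--Bogdan--\.Zak transform together with $K\Gamma=\Gamma$ — with the Martin-kernel definition of $M$. The argument that the factor $g(z)=|z|^{\alpha-\beta-d}u_\Gamma(Kz,y_0)$ must converge \emph{because} the product $g(z)(1+o_y(1))$ does for each $y$, and that this simultaneously forces $\Lambda/H$ to be constant, is a clean touch. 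The trade-off is that the paper's proof is shorter and defers the analytical work to an existing reference, while yours is self-contained modulo the Kelvin transform of the Green function and makes the mechanism (inversion exchanges the apex with infinity, turning the entrance law into the Martin kernel) visible.
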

\begin{proof} Referring to some of the facts displayed in Theorem \ref{entrance}, we have with the help of Fubini's Theorem, the scaling properties of the transition density $p^\Gamma$,  and \eqref{P0} that 
\begin{align}
&\int_\Gamma f(y) \int_0^\infty \mathbb{P}_0^\triangleleft(X_t \in\d y)\d t\notag\\
&=\int_0^\infty \d t  \int_\Gamma f(y) M(y)n_t(y)\d y\notag\\
&=\int_0^{\infty} {\rm d}t\int_\Gamma f(y) M(y) \lim_{x\to 0} \frac{p_t^{\Gamma}(x,y)}{\mathbb{P}_x(\kappa_{\Gamma}>1)}{\rm d} y\notag\\
&= \int_0^{1} {\rm d}t \int_\Gamma f(y) M(y) \lim_{x\to 0} \frac{p_t^{\Gamma}(x,y)}{\mathbb{P}_x(\kappa_{\Gamma}>1)}{\rm d} y+\int_1^{\infty} {\rm d}t  \int_\Gamma f(y) M(y) \lim_{x\to 0} \frac{p_t^{\Gamma}(x,y)}{\mathbb{P}_x(\kappa_{\Gamma}>1)}{\rm d} y\notag\\
&=\int_0^{1} {\rm d}t \int_\Gamma f(y) M(y) \lim_{x\to 0} \frac{ p_t^{\Gamma}(x,y)}{\mathbb{P}_x(\kappa_{\Gamma}>1)}{\rm d} y\notag\\
&\hspace{2cm}+ \int_1^{\infty} {\rm d}t \int_\Gamma f(y) M(y)\lim_{x\to 0} \frac{t^{-d/\alpha} p_1^{\Gamma}(t^{-1/\alpha}x,t^{-1/\alpha}y)}{\mathbb{P}_x(\kappa_{\Gamma}>1)}{\rm d} y
\label{2terms}
\end{align}
We wish to use Dominated Convergence theorem to pull the limit out of  each of the integrals. Referring again to Theorem \ref{entrance}, and recalling the compactness of the support of  $f$, the integrand in the first term on the righthand side of \eqref{2terms} is uniformly  bounded.

\smallskip

For the second term on the right-hand side of \eqref{2terms}, we can assume without loss of generality that the support of $f$ lies in $\Gamma\cap\{x\in\mathbb{R}^d: |x|<1\}$. Recall again from   the bound in Lemma 4.2 of \cite{BB}, which states that, for $t>1$ and $|x|<1$, there exists a constant $C>0$ such that 
\[
C^{-1} t^{-\beta/\alpha}M(x)<\mathbb{P}_{t^{-1/\alpha}x}(\kappa_{\Gamma}>1)< C t^{-\beta/\alpha}M(x).
\]

Using the above, and appealing in particular  to equation (4.16) of Bogdan et al. \cite{BPW}, for $t,|x|>1$ and $y\in\Gamma$,
\begin{align*}
\frac{t^{-d/\alpha} p_1^{\Gamma}(t^{-1/\alpha}x,t^{-1/\alpha}y)}{\mathbb{P}_x(\kappa_{\Gamma}>1)}
&<\frac{t^{-d/\alpha} p_1^{\Gamma}(t^{-1/\alpha}x,t^{-1/\alpha}y)}{M(x)}
\\&<t^{-(d+\beta)/\alpha}C \frac{\mathbb{P}_{t^{-1/\alpha}y}(\kappa_{\Gamma}>1)}{(1+ t^{-1/\alpha}|y|)^{d+\alpha}}\\
&<t^{-(d+2\beta)/\alpha} .
\end{align*}
The right-hand side above can now be used as part of a dominated convergence argument for the second term in \eqref{2terms}, noting in particular that $f$ is compactly supported.

\smallskip

In conclusion, we have
\begin{align}
&\int_\Gamma f(y) \int_0^\infty \mathbb{P}_0^\triangleleft(X_t \in\d y)\d t\notag\\
&= \lim_{x\to 0} \int_\Gamma f(y) M(y) \int_0^{1} {\rm d}t\frac{ p_t^{\Gamma}(x,y)}{\mathbb{P}_x(\kappa_{\Gamma}>1)}{\rm d} y\notag\\
&\hspace{2cm}+ \lim_{x\to 0}  \int_\Gamma f(y) M(y)\int_1^{\infty} {\rm d}t\frac{ p_t^{\Gamma}(x,y)}{\mathbb{P}_x(\kappa_{\Gamma}>1)}{\rm d} y\notag\\
&=
\lim_{x\to 0}\frac{\int_\Gamma f(y)M(y) G^{\Gamma}(x,y)}{M(x)}.
\label{weakconvergence}
\end{align}
The above limit has already been computed in Lemma 3.5 of Bogdan et al. \cite{BPW} and agrees with the conclusion of this Lemma. 
\end{proof}

To complete the proof of part (ii) of Theorem \ref{BZcone}, we must show \eqref{switching}. To this end, let us start by recalling Hunt's switching identity for $X$ as a symmetric process and $\kappa_\Gamma$ as a hitting time of an open domain. It ensures that for any $f,g:\Gamma\to\mathbb{R}$ measurable and bounded one has
\[
\int_{\Gamma}\d x f(x)\int_{\Gamma}\d y \,p^\Gamma_t(x, y)g(y) = \int_{\Gamma}\d x g(x)\int_{\Gamma}\d y \,p^\Gamma_t(x, y)f(y),\qquad \forall t\geq 0.
\]
With this in hand, it is easy to check that 
\begin{align*}
\int_{\Gamma}\mu(\d x)f(x)\int_{\Gamma}\d y \,p^\triangleleft_t(x, y)g(y) & =
\int_{\Gamma}\d xf(x)M(x)H(x)\int_{\Gamma}p^\Gamma_t(x, y)g(y)\frac{M(y)}{M(x)}\\
&=\int_{\Gamma}\d x g(x)M(x)H(x)\int_{\Gamma}p^\Gamma_t(x, y)f(y)\frac{H(y)}{H(x)}\\
&=\int_{\Gamma}\mu(\d x)g(x)\int_{\Gamma}\d y \,p^\triangleright_t(x, y)f(y),\qquad  t\geq 0,
\end{align*}
as required.
\hfill$\square$

\section{Proof of Theorem \ref{0}}\label{proof0}

To prove the weak convergence on the Skorokhod space of $\mathbb{P}_x$, as $x\to0$, to $\mathbb{P}_0$, we appeal to the following proposition, lifted from Dereich et al. \cite{DDK} and written in the language of the present context.

\begin{proposition}\label{prop}
	Recall $\tau^\ominus_{\eps}=\inf\{t: |X_t|\geq \eps\}$, $\eps>0$. Suppose that the following  conditions hold: 
\begin{enumerate}
\item[(a)] $\lim_{\eps\to 0} \limsup_{\Gamma\ni z \to 0}{\mathbb E}^{\triangleleft}_z[\tau^\ominus_{\eps}]=0$
\item[(b)] $\lim_{\Gamma\ni z\to 0} {\mathbb P}^\triangleleft_{z}(X_{\tau^\ominus_{\eps}}\in\cdot)=:\mu_\eps(\cdot)$ exists for all $\eps>0$
\item[(c)] ${\mathbb P}^{\triangleleft}_0$-almost surely, $X_0=0$ and $X_t\not=0$ for all $t>0$
\item[(d)] ${\mathbb P}^{\triangleleft}_0((X_{\tau^\ominus_{\eps}+t})_{t\geq 0}\in\cdot)=\int_\Gamma \mu_\eps(\d y){\mathbb P}^\triangleleft_{y}(\cdot)$ for every $\eps>0$
\end{enumerate}
Then the mapping
$$
	\Gamma\ni z\mapsto {\mathbb P}^\triangleleft_z
$$
is continuous in the weak topology on the Skorokhod space.
\end{proposition}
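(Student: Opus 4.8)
The plan is to split the statement into two pieces: weak continuity of $z\mapsto\mathbb{P}^\triangleleft_z$ at interior points $z_0\in\Gamma$, and weak continuity at the apex, i.e.\ $\mathbb{P}^\triangleleft_z\Rightarrow\mathbb{P}^\triangleleft_0$ on $\mathbb{D}$ as $\Gamma\ni z\to0$. The first piece is essentially the Feller property of the conditioned process on the open cone. Since $(X,\mathbb{P}^\triangleleft)$ is the $M$-transform of the stable process killed on exiting $\Gamma$ and $M$ is continuous and strictly positive on $\Gamma$, it is enough to check that $\mathbb{P}^\Gamma_z\Rightarrow\mathbb{P}^\Gamma_{z_0}$ on $\mathbb{D}$ as $z\to z_0\in\Gamma$; this follows from the Feller property of $X$ on $\mathbb{R}^d$ together with the $\mathbb{P}_{z_0}$-a.s.\ continuity of $\kappa_\Gamma$ as a functional on $\mathbb{D}$, the latter being a consequence of the regularity of every boundary point of the Lipschitz cone for $\Gamma^c$ with respect to $X$. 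Thus everything reduces to continuity at $0$.

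For continuity at $0$ I would follow the classical scheme of tightness plus identification of limit points, organised around the first passage time $\tau^\ominus_\eps$. \emph{Tightness.} Given $z\in\Gamma\cap B_\eps$, decompose the path of $X$ under $\mathbb{P}^\triangleleft_z$ at $\tau^\ominus_\eps$. On $[0,\tau^\ominus_\eps)$ the path stays in $\overline{B_\eps}$, and by condition (a) and Markov's inequality $\limsup_{z\to0}\mathbb{P}^\triangleleft_z(\tau^\ominus_\eps>\delta)\le\delta^{-1}\limsup_{z\to0}\mathbb{E}^\triangleleft_z[\tau^\ominus_\eps]\to0$ as $\eps\to0$, so the initial segment contributes an oscillation near time $0$ that is uniformly small in probability as $\eps\to0$. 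By the strong Markov property the shifted path $\theta_{\tau^\ominus_\eps}X$ has law $\int_\Gamma\mathbb{P}^\triangleleft_z(X_{\tau^\ominus_\eps}\in\d y)\,\mathbb{P}^\triangleleft_y(\cdot)$, and by condition (b) the mixing measures converge weakly to $\mu_\eps$; since $y\mapsto\mathbb{P}^\triangleleft_y$ is weakly continuous on $\Gamma$, the family of laws of $\theta_{\tau^\ominus_\eps}X$ under $\mathbb{P}^\triangleleft_z$, $z\in\Gamma\cap B_\eps$, is tight on $\mathbb{D}$. Combining the two pieces via a standard tightness criterion on $\mathbb{D}([0,\infty))$ (e.g.\ Aldous' criterion) shows that $\{\mathbb{P}^\triangleleft_z:\Gamma\ni z\to0\}$ is tight. \emph{Identification.} Let $\mathbb{Q}$ be a subsequential weak limit along some $z_n\to0$; projection at time $0$ forces $\mathbb{Q}(X_0=0)=1$. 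For all but countably many $\eps$ the level $\eps$ is crossed cleanly, so $\tau^\ominus_\eps$, $X_{\tau^\ominus_\eps}$ and $\theta_{\tau^\ominus_\eps}$ are a.s.\ continuous functionals under the relevant laws; the continuous mapping theorem, the strong Markov representation above, condition (b) and the weak continuity of $y\mapsto\mathbb{P}^\triangleleft_y$ then give that the law of $\theta_{\tau^\ominus_\eps}X$ under $\mathbb{Q}$ equals $\int_\Gamma\mu_\eps(\d y)\mathbb{P}^\triangleleft_y(\cdot)$, which by condition (d) is precisely the law of $\theta_{\tau^\ominus_\eps}X$ under $\mathbb{P}^\triangleleft_0$. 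Since under both $\mathbb{Q}$ and $\mathbb{P}^\triangleleft_0$ the path lies in $\overline{B_\eps}$ on $[0,\tau^\ominus_\eps)$, letting $\eps\downarrow0$ — using condition (c) to ensure $\tau^\ominus_\eps\downarrow0$, $\mathbb{P}^\triangleleft_0$-a.s., and the bound from (a) to ensure $\tau^\ominus_\eps\to0$ in $\mathbb{Q}$-probability — makes the initial segment disappear under both measures, so $\mathbb{Q}$ and $\mathbb{P}^\triangleleft_0$ have the same finite dimensional distributions, hence $\mathbb{Q}=\mathbb{P}^\triangleleft_0$. Tightness then upgrades this to $\mathbb{P}^\triangleleft_z\Rightarrow\mathbb{P}^\triangleleft_0$.

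I expect the main obstacle to be the careful handling of the Skorokhod topology in the splitting argument: one must verify that for all but countably many $\eps$ the first passage time $\tau^\ominus_\eps$, the position $X_{\tau^\ominus_\eps}$ and the shift $\theta_{\tau^\ominus_\eps}$ are almost surely continuous functionals on $\mathbb{D}$ under the limit laws involved (so that weak convergence, the continuous mapping theorem and the strong Markov property can be combined), and that gluing the short, spatially confined initial segment onto the post-$\tau^\ominus_\eps$ path is a perturbation which is uniformly small in the Skorokhod metric as $\eps\to0$; the uniformity in $z$ is exactly what condition (a) is there to supply. A secondary technical point, used in the reduction, is the boundary regularity of the Lipschitz cone for the stable process, which is what makes $\kappa_\Gamma$ an a.s.-continuous functional and hence yields the Feller property of $(X,\mathbb{P}^\triangleleft)$ on $\Gamma\setminus\{0\}$.
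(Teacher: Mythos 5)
The paper does not actually prove this proposition: it is imported verbatim from Dereich et al.\ \cite{DDK} ("lifted from Dereich et al.\ \cite{DDK} and written in the language of the present context"), and the paper's own work in Section \ref{proof0} consists only of verifying hypotheses (a)--(d). So there is no in-paper proof to compare against; your task was effectively to reconstruct the argument of the cited source, and you have done so along the same lines: weak continuity on the open cone via the Feller/$h$-transform structure, and continuity at the apex by tightness plus identification of subsequential limits, both organised around the splitting of the path at $\tau^\ominus_\eps$, with (a) controlling the initial segment, (b) and the interior continuity controlling the post-$\tau^\ominus_\eps$ law, and (c), (d) pinning the limit down as $\mathbb{P}^\triangleleft_0$. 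I find no gap in the logic: each hypothesis is used exactly where it is needed, and you have correctly flagged the two genuinely delicate points, namely that $\tau^\ominus_\eps$, $X_{\tau^\ominus_\eps}$ and $\theta_{\tau^\ominus_\eps}$ are a.s.\ continuous functionals only for all but countably many levels $\eps$ (a Fubini argument over the monotone family $\eps\mapsto\tau^\ominus_\eps$), and that removing the initial segment is a uniformly small Skorokhod perturbation because it is both short in time (by (a)) and confined to $\overline{B_\eps}$ in space. The one place I would ask for slightly more care is the interior continuity step: passing weak convergence of $\mathbb{P}^\Gamma_z$ through the Doob $h$-transform requires uniform integrability of $M(X_t)\mathbf{1}_{(t<\kappa_\Gamma)}$ under $\mathbb{P}_z$ for $z$ near $z_0$ (which holds here since $M(x)\le C|x|^\beta$ with $\beta<\alpha$ and the stable process has moments of all orders in $(0,\alpha)$), and the boundary regularity of the Lipschitz cone that makes $\kappa_\Gamma$ an a.s.\ continuous functional; both are true but deserve a sentence in a full write-up.
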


{\it Verification of Condition (a).}
Define $G^\triangleleft(x,y)$ via the relation 
\[
\int_\Gamma f(y)G^\triangleleft(x,y)\d y = \mathbb{E}^\triangleleft_x\left[\int_0^\infty f(X_t)\d t\right], 
\]
and note that $G^\triangleleft(x,y) = M(y)G^\Gamma(x,y)/M(x)$, $x,y\in\Gamma$.
Then, for $f$ positive, bounded, measurable and compactly supported and $x\in\Gamma\cup\{0\}$, then \eqref{weakconvergence} and Lemma \ref{excursionpotential} tells us that 
\[
\lim_{\Gamma\ni z\to0}\int_\Gamma f(y)G^\triangleleft(z,y)\d y = \int_\Gamma f(y)G^\triangleleft(0,y)\d y.
\]
Now note that 
\begin{align*}\lim_{\varepsilon\to 0} \limsup_{\Gamma\ni z \to 0}{\mathbb E}^{\triangleleft}_z[\tau^\ominus_{\varepsilon}]&=\lim_{\varepsilon\to 0} \limsup_{\Gamma\ni z \to 0}{\mathbb E}^{\triangleleft}_z\left[\int_0^{\tau^\ominus_{\varepsilon}}\mathbf{1}_{(|X_t| <\varepsilon)}{\rm d} t\right]\\
&\leq\lim_{\varepsilon\to 0} \limsup_{\Gamma\ni z \to 0}{\mathbb E}^{\triangleleft}_z\left[\int_0^{\infty}\mathbf{1}_{(|X_t| <\eps)}{\rm d} t\right]\\
&\leq  \lim_{\varepsilon\to 0} \limsup_{\Gamma\ni z \to 0}\int_{|y|<\varepsilon}G^{\triangleleft}(z, y){\rm d}y\\
&\leq C \lim_{\varepsilon\to 0} \int_{|y|<\varepsilon}H(y)M(y){\rm d}y \\
&\leq C \lim_{\varepsilon\to 0} \int_{\Omega} \sigma_1({\rm d} \theta)M(\theta)^2\int_0^\varepsilon r^{\alpha-\beta-1}{\rm d}r\\
&\leq C \lim_{\varepsilon\to 0} \varepsilon^{\alpha-\beta}\\
&=0,
\end{align*}
where $C\in(0,\infty)$ is an unimportant constant which changes its value in each line and $\sigma_1(\d \theta)$ is the surface measure on $\mathbb{S}^{d-1}$ normalised to have unit mass.

\smallskip

{\it Verification of Condition (b)}. This condition is covered by Theorem \ref{oo}. Note, moreover, that $\mu_\varepsilon$ does not depend on $\varepsilon$.

\smallskip

{\it Verification of  Condition (c)}. This condition is covered by Theorem \ref{BZcone}.

\smallskip

{\it Verification of Condition (d)}.  We have  that, for $|x|<\eta<\varepsilon$, from the Strong Markov Property,
\begin{equation}
\mathbb{E}^\triangleleft_x[f((X_{\tau^\ominus_{\eps}+t}: t\geq 0)]=
\mathbb{E}^\triangleleft_x\left[ \mathbb{E}_{X_{\tau^\ominus_{\eps}}}[f(X_t : t\geq 0)] \right] = 
\mathbb{E}^\triangleleft_x\left[ g(X_{\tau^\ominus_{\eta}}  ) \right] 
\label{useDCTagain}
\end{equation}
for bounded, measurable $f$, where 
\[
g(y) = \mathbb{E}^\triangleleft_y\left[\mathbb{E}^\triangleleft_{X_{\tau^\ominus_{\eps}}}[f(X_t : t\geq 0)].
\right]
\]
is bounded and measurable. From Theorem \ref{oo} and the Skorokhod continuity of $X, \mathbb{P}^\triangleleft$, which follows from the Lamperti--Kiu representation \eqref{eq:lamperti_kiu}, we can take limits in \eqref{useDCTagain} to get 
\begin{equation}
\mathbb{E}^\triangleleft_0[f((X_{\tau^\ominus_{\eps}+t}: t\geq 0)]=
\mathbb{E}^\triangleleft_0\left[ \mathbb{E}_{X_{\tau^\ominus_{\eps}}}[f(X_t : t\geq 0)] \right] = 
\mathbb{E}^\triangleleft_0\left[ g(X_{\tau^\ominus_{\eta}}  ) \right] 
\label{useDCTagain2}
\end{equation}
Now appealing to Theorem \ref{BZcone} (ii), thanks to c\`adl\`ag paths, we know that $X_{\tau^\ominus_{\eta}} \to0$ almost surely under $\mathbb{P}_0^\triangleleft$. As a consequence, we can appeal to the Dominated Convergence Theorem in \eqref{useDCTagain2}, together with condition (a) and, again, the Skorokhod continuity of $X$ under $\mathbb{P}^\triangleleft_y$, $y\in\Gamma$, and deduce the statement in condition (d).

\section{It\^o synthesis and proof of Theorem \ref{recex}}\label{proofrecex}

The basis of Theorem \ref{recex} is the classical method of It\^o synthesis of Markov processes and an extension of the main ideas in \cite{R05}. That is to say, the technique of piecing together excursions end to end under appropriate conditions, whilst ensuring that  the strong Markov property holds. In our case, we are also charged with ensuring that self-similarity is preserved as well. We split the proof of Theorem \ref{recex} into the construction of the recurrent extension and the existence and characterisation of a stationary distribution.

\subsection{Some general facts on self-similar recurrent extensions}
As described before the statement of Theorem~\ref{recex}, according to It\^o's synthesis theory a self-similar recurrent extension of $(X, \mathbb{P}^{\Gamma})$ can be build from a self-similar excursion measure, i.e. a measure on $\mathbb{D}$ satisfying the conditions (i)-(iv) stated just before Theorem~\ref{recex}. 

\smallskip

Suppose that  $\mathbf{N}^{\Gamma}$ is a self-similar excursion measure compatible with the semigroup of $(X, \mathbb{P}^{\Gamma})$. Define a Poisson point process $((s,\chi_s), s>0)$ on $(0,\infty)\times\mathbb{D}$ with intensity $\d t\times \mathbf{N}^{\Gamma}(\d \chi)$ and let each excursion length be denoted by 
$$\zeta_s :=\inf\{t>0:\chi_s(t)=0\} >0 .$$
Then, via the subordinator 
\[
\varsigma_t = \sum_{s\leq t} \zeta_s,\qquad  t\geq 0 ,
\]
 we can define a local time process at $0$ by 
\[
L_t = \inf\{r>0:\varsigma_r>t\}, \qquad t\geq 0
\]
Note, for each $t\geq 0$, by considering the Laplace transform of  $\varsigma_t$, Campbell's formula and the assumption that $\mathbf{N}^{\Gamma}(1-{\rm e}^{-\zeta})<\infty$ ensures that $(\varsigma_t, t\geq 0)$ is well defined as a subordinator with  jump measure given by 
$\nu(\d s) = \mathbf{N}^{\Gamma}(\zeta\in \d s) $, $s>0$.

\smallskip

%Note, it would also be possible to add an artificial drift to the inverse local time process $\varsigma$, in which case the local time $L$ will be absolutely continuous with respect to Lebesgue measure. In that case, the origin can be called {\it sticky}.

\smallskip

Now, we define $({\cenX}_t,t\geq 0)$ with the following pathwise construction. For $t\geq 0$, let $L_t=s,$ then $\varsigma_{s-}\leq t\leq \varsigma_{s-}$ and define 
$${\cenX}_t := \begin{cases} 
\Delta_s(t-\varsigma_{s-}), &\text{ if }\varsigma_{s-}<\varsigma_s,\\ 
0, &\text{ if } \varsigma_{s-}=\varsigma_s \text{ or }s=0.
\end{cases}
$$
Salisbury \cite{Salt1986a, Salt1986b} demonstrates how the process constructed above preserves the Markov property. In fact, one can easily adapt the arguments provided by Blumenthal \cite{Blum1983}, who considers only $[0,\infty)$ valued processes, to show that, under some regularity hypotheses on the semigroup of the minimal process $(X,\mathbb{P}^{\Gamma}),$ the process constructed above is a Feller process. This is due to the fact that, here we are considering an extension from $\Gamma$ to $\Gamma\cup\{0\},$ for $(X,\mathbb{P}^{\Gamma})$, which, by decomposing this process into polar coordinates, is equivalent to extend the radial part from $(0,\infty)$ to $[0,\infty)$.  

%See also Rogers \cite{Rogers1983} for an analytical counterpart.  In our setting it is easiest to see the verification that the process ${\cenX}$ as Markovian by appealing to the sufficient conditions in Blumenthal \cite{Blum1983}. 

\smallskip

To thus verify the Feller property, suppose that $C_0(\Gamma)$ is the space of continuous functions on $\Gamma$ vanishing at 0 and $\infty,$ and we write $({\mathcal P}^\Gamma_t, t\geq 0)$ for the semigroup of $(X,\mathbb{P}^\Gamma)$. The aforesaid regularity hypothesis needed to adapt the argument given by Blumenthal \cite{Blum1983} are:
 \begin{description}
\item[(i)] If $f\in C_0(\Gamma)$, then ${\mathcal P}^\Gamma_t f\in C_0(\Gamma)$ and ${\mathcal P}^\Gamma_t f \mapsto f$ uniformly as $t\to0$;
\item[(ii)] For each $q>0$, the mapping $x\mapsto\mathbb{E}^\Gamma_x[{\rm e}^{-q\zeta}]$ is continuous in $\Gamma$;
\item[(iii)] The following limits hold;
\[
 \lim_{\Gamma\ni x\to0} \mathbb{E}^\Gamma_x[{\rm e}^{-\zeta} ] = 1 \text{ and } \lim_{x\in\Gamma,\, |x|\to\infty} \mathbb{E}^\Gamma_x[{\rm e}^{- \zeta} ] =0.
\]
\end{description}
All of these are easily verified using the Lamperti--Kiu representation of $(X,\mathbb{P}^\Gamma)$ .

\smallskip

Now that we know that the process $({\cenX}_t,t\geq 0)$ defined above is a strong Markov process, in fact a Feller process, we should verify that such a process has the scaling property. But this is a consequence of the condition (iv) above, as can be easily verified using the arguments in the proof of Lemma 2 in \cite{R05}.   
\smallskip

We will next describe all the excursion measures $\mathbf{N}^{\Gamma},$ compatible with $(X, \mathbb{P}^\Gamma).$ To that end, we recall that the entrance law  $(\mathbf{N}^{\Gamma}_t(\d y), t>0)$ of an excursion measure $\mathbf{N}^{\Gamma},$ is defined by  
$$\mathbf{N}^{\Gamma}_t(\d y):=\mathbf{N}^{\Gamma}(X_t\in \d y, t<\zeta),\qquad t>0.$$ 

\begin{lemma}\label{lemmadecomposition} Let $\mathbf{N}^{\Gamma}$ be a self-similar excursion measure compatible with $(X, \mathbb{P}^{\Gamma})$, and $\gamma$ the index appearing in (iv). Then, its entrance law admits the following representation:  there is a constant $a\geq 0$, such that for all $t>0$ and any $f:\Gamma\mapsto \mathbb{R}^+$ continuous and bounded
\begin{equation}\label{EL-decomp}
\begin{split}
\mathbf{N}^{\Gamma}(f(X_t),t<\zeta) &=a \lim_{|x|\to 0} \frac{\mathbb{E}_x[f(X_t),t<\kappa_\Gamma]}{M(x)}\\
&\qquad +\int_{|y|>0} \mathbf{N}^{\Gamma}(X_{0+}\in{\rm d}y)\mathbb{E}_{y}[f(X_t),t<\kappa_\Gamma].
\end{split}
\end{equation}
 Furthermore, there is a measure $\pi^{\Gamma}$ on $\Omega$ such that  
\begin{equation}\label{rep.X0}
\mathbf{N}^{\Gamma}(|X_{0+}|\in{\rm d}r, \arg(X_{0+})\in{\rm d}\theta )=\frac{{\rm d} r}{r^{1+\alpha \gamma}}\pi^{\Gamma}({\rm d}\theta),
\end{equation}
and $\int_{\Omega}\pi^{\Gamma}({\rm d}\theta)M(\theta)<\infty.$ Finally, necessarily  $\gamma \in(0,1)$, and $\gamma \leq \beta/\alpha;$ if $\gamma=\beta/\alpha$ then the measure $\pi^{\Gamma}\equiv 0,$ whilst if $\gamma<\beta/\alpha$, then $a\equiv 0.$ 
\end{lemma}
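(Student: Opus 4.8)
The plan is to exploit the self-similarity condition (iv) to pin down the structure of the entrance law $(\mathbf{N}^{\Gamma}_t(\d y), t>0)$ of any compatible excursion measure, following the line of argument in Section 2.2 of \cite{R05}. First I would use the Markov property (ii) under $\mathbf{N}^{\Gamma}$ to write, for $0<s<t$,
\[
\mathbf{N}^{\Gamma}(f(X_t), t<\zeta) = \int_\Gamma \mathbf{N}^{\Gamma}_s(\d x)\, \mathbb{E}_x[f(X_{t-s}), t-s<\kappa_\Gamma],
\]
so that $(\mathbf{N}^{\Gamma}_t(\d y), t>0)$ is a genuine entrance law for the killed semigroup $p^\Gamma$. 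The scaling relation (iv') forces the self-similar normalisation $\mathbf{N}^{\Gamma}_t(\d y) = t^{-(d/\alpha)-\gamma} \mathbf{N}^{\Gamma}_1(t^{-1/\alpha}\d y)$ in the obvious sense, for the same $\gamma\in(0,1)$ appearing in (iv). The key step is then to decompose the entrance law according to its behaviour as $t\to0$: either mass escapes to the apex ($X_{0+}=0$) or it enters from a strictly positive point. Concretely, I would split $\mathbf{N}^{\Gamma}(\,\cdot\,) = \mathbf{N}^{\Gamma}(\,\cdot\,, X_{0+}=0) + \mathbf{N}^{\Gamma}(\,\cdot\,, X_{0+}\neq 0)$, show the second piece is carried by paths issued from $\Gamma$ and hence, by the Markov property, equals $\int_{|y|>0}\mathbf{N}^{\Gamma}(X_{0+}\in\d y)\,\mathbb{E}_y[\,\cdot\,, t<\kappa_\Gamma]$, while the first piece, being a self-similar entrance law that emanates from the apex, must by the uniqueness of the harmonic function $M$ (see \eqref{martingale}, Theorem \ref{entrance} and the construction of $\mathbb{P}^\triangleleft_0$ in \eqref{P0}) be proportional to the entrance law $t\mapsto \lim_{|x|\to0}\mathbb{E}_x[f(X_t), t<\kappa_\Gamma]/M(x)$; this yields the constant $a\geq 0$ and the representation \eqref{EL-decomp}.

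Next I would analyse the jump-in measure $\mathbf{N}^{\Gamma}(X_{0+}\in \d y)$. Writing it in polar coordinates $|X_{0+}|\in\d r$, $\arg(X_{0+})\in\d\theta$, the scaling property (iv') applied near $t=0$ forces the radial part to be a power law: a standard argument (disintegrate, apply the scaling relation with an arbitrary $c>0$, and use that the only measures on $(0,\infty)$ invariant up to the correct power of $c$ are constant multiples of $r^{-1-\alpha\gamma}\d r$) gives \eqref{rep.X0} for some measure $\pi^\Gamma$ on $\Omega$. The finiteness $\int_\Omega \pi^\Gamma(\d\theta)M(\theta)<\infty$ then comes from condition (iii), $\mathbf{N}^{\Gamma}(1-\mathrm e^{-\zeta})<\infty$: using the lower bound on $\mathbb{P}_{r\theta}(\kappa_\Gamma>t)$ from Proposition \ref{BPWprop} and Lemma 4.2 of \cite{BB} (i.e. $\mathbb{P}_{r\theta}(\kappa_\Gamma>t)\gtrsim r^\beta M(\theta) t^{-\beta/\alpha}$ for $r^\alpha<t$) together with the representation \eqref{EL-decomp}, one estimates $\mathbf{N}^{\Gamma}(1-\mathrm e^{-\zeta})$ from below by a constant times $\int_\Omega \pi^\Gamma(\d\theta)M(\theta)\int_0^\infty r^{-1-\alpha\gamma}(1-\mathrm e^{-c r^\alpha})\,\d r$, and the inner integral is finite and positive precisely when $0<\gamma<1$; this simultaneously forces $\gamma\in(0,1)$ and the claimed finiteness.

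Finally, I would establish the dichotomy $\gamma\leq\beta/\alpha$, with $\pi^\Gamma\equiv 0$ when $\gamma=\beta/\alpha$ and $a\equiv 0$ when $\gamma<\beta/\alpha$. The idea is that the two terms in \eqref{EL-decomp} scale with \emph{different} exponents: the $a$-term, being built from $M$ which is homogeneous of degree $\beta$, is automatically self-similar with index $\gamma=\beta/\alpha$, whereas the $\pi^\Gamma$-term scales with the index $\gamma$ read off from \eqref{rep.X0}. Since condition (iv) demands a single common index $\gamma$, the two contributions cannot coexist unless their exponents coincide; comparing the small-$t$ asymptotics of $\int_\Gamma\mathbf{N}^\Gamma_t(\d y)$, which by \eqref{scalen2}-type reasoning behaves like $t^{-\beta/\alpha}$ for the $a$-part and like $t^{-\gamma}$ for the $\pi^\Gamma$-part, shows $\gamma\leq\beta/\alpha$ always, with equality killing $\pi^\Gamma$ and strict inequality killing $a$. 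I expect the main obstacle to be the rigorous justification that the apex-emanating part of the entrance law is \emph{exactly} a multiple of the $M$-normalised entrance law — this requires the uniqueness statement for $M$ in the class of positive harmonic functions for the killed process together with a careful dominated-convergence passage in \eqref{P0}, and controlling the interchange of the $|x|\to0$ limit with the $t$-integral (as already done in the proof of Theorem \ref{BZcone}(ii), Lemma \ref{excursionpotential}) is the delicate point.
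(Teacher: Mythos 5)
Your overall strategy — split the entrance law into an apex-emanating piece and a jump-in piece, use scaling to pin down the radial law of $X_{0+}$, and compare exponents to get the dichotomy — is the right one and matches the paper's in spirit. However, there are two genuine gaps.

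The first and more serious gap is the treatment of the apex-emanating piece. You assert that the $X_{0+}=0$ part ``must by the uniqueness of the harmonic function $M$ be proportional to the $M$-normalised entrance law.'' This is not a consequence of the uniqueness of $M$ as a positive harmonic function: what you actually need is uniqueness, up to a multiplicative constant, of self-similar \emph{entrance laws} for the killed semigroup $p^\Gamma$ that emanate from the apex — a nontrivial statement which is not in the paper's toolbox and would require its own proof. The paper avoids needing any such uniqueness by a direct limiting computation under $\mathbf{N}^\Gamma$: it writes $\mathbf{N}^{\Gamma}(f(X_t),t<\zeta)=\lim_{s\to 0}\mathbf{N}^{\Gamma}(\mathbb{E}_{X_s}[f(X_t),t<\kappa_\Gamma],s<\zeta)$ via the Markov property under $\mathbf{N}^\Gamma$ and dominated convergence, then splits on $\{|X_s|<\epsilon\}$ versus $\{|X_s|>\epsilon\}$. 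On the small-ball event it inserts $M(X_s)$ and factors the ratio $\mathbb{E}_{X_s}[f(X_t),t<\kappa_\Gamma]/M(X_s)$, which by Proposition~\ref{BPWprop} and Theorem~\ref{entrance} converges uniformly as $X_s\to 0$. This produces the constant $a=\lim_{\epsilon\to 0}\lim_{s\to 0}\mathbf{N}^{\Gamma}(M(X_s),|X_s|<\epsilon,s<\zeta)$ explicitly, with a further argument (again using the asymptotics of $M(x)/\mathbb{P}_x(\kappa_\Gamma>1)$) to show $a<\infty$. Your proposal would need to supply the missing uniqueness theorem to reach the same conclusion; the paper's route is both more elementary and constructive.

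The second gap is in the argument for $\gamma\leq\beta/\alpha$ and the dichotomy. You wave at ``comparing the small-$t$ asymptotics'' of the total mass, but this does not by itself exclude $\gamma>\beta/\alpha$, nor does it force $\pi^\Gamma\equiv 0$ when $\gamma=\beta/\alpha$. The paper extracts both facts cleanly from the scaling identity: applying (iv') to $f(x)=M(x)g(\arg(x))\mathbf{1}_{(|x|\in(0,1))}$ yields $\mathbf{N}^{\Gamma}(M(X_{0+})g(\arg(X_{0+})),|X_{0+}|\in(0,c))=c^{\alpha\gamma-\beta}\mathbf{N}^{\Gamma}(M(X_{0+})g(\arg(X_{0+})),|X_{0+}|\in(0,1))$, and differentiating in $c$ gives a density proportional to $(\beta-\alpha\gamma)r^{\beta-\alpha\gamma-1}$. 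Nonnegativity of this density forces $\beta-\alpha\gamma\geq 0$; if $\beta=\alpha\gamma$ the prefactor vanishes and $\pi^\Gamma\equiv 0$ follows immediately. The complementary implication ($\gamma<\beta/\alpha\Rightarrow a=0$) is obtained from $t^{-\gamma}\mathbf{N}^\Gamma(\zeta>1)=\mathbf{N}^\Gamma(\zeta>t)\geq a C t^{-\beta/\alpha}$ and $t\downarrow 0$. Your scaling argument for \eqref{rep.X0} and the finiteness bound via $\mathbf{N}^\Gamma(1-\mathrm{e}^{-\zeta})<\infty$ are essentially correct and close to the paper's.

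Minor point: your claim ``the inner integral is finite and positive precisely when $0<\gamma<1$'' is not what gives $\gamma\in(0,1)$ — that range is already forced by the finiteness of $\mathbf{N}^\Gamma(1-\mathrm{e}^{-\zeta})$ together with the $t^{-\gamma}$ scaling of $\mathbf{N}^\Gamma(\zeta>t)$, independently of the $\pi^\Gamma$-part, and indeed is part of the hypothesis (iv).
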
 
\begin{proof} In order to prove the decomposition (\ref{EL-decomp}) we start by noticing that for all $s,t>0,$ we have 
$$\mathbf{N}^{\Gamma}(f(X_t),t<\zeta)= \mathbf{N}^{\Gamma}(\lim_{s\to 0}f(X_{s+t}),s+t<\zeta),$$ which is a consequence of the dominated convergence theorem, since 
$$ \mathbf{N}^{\Gamma}(f(X_{s+t}),s+t<\zeta) \leq ||f|| \mathbf{N}^{\Gamma}(s+t<\zeta)\leq ||f||\mathbf{N}^{\Gamma}(t<\zeta)<\infty,$$ since $\mathbf{N}^{\Gamma}(t<\zeta)$ is always finite for any $t>0$ because 
$$\infty>\mathbf{N}^{\Gamma}(1-{\rm e}^{-\zeta})>\mathbf{N}^{\Gamma}(1-{\rm e}^{-\zeta},t<\zeta)>(1-{\rm e}^{-t})\mathbf{N}^{\Gamma}(t<\zeta).$$
The former, together with the Markov property under $\mathbf{N}^{\Gamma},$ implies that
\begin{align*}
\mathbf{N}^{\Gamma}(f(X_t), t<\zeta)&= \mathbf{N}^{\Gamma}(\lim_{s\to 0}f(X_{s+t}),s+t<\zeta)\\
&= \lim_{s\to 0}\mathbf{N}^{\Gamma}(f(X_{s+t}),s+t<\zeta)\\
&=\lim_{s\to 0}\mathbf{N}^{\Gamma}(\mathbb{E}_{X_s}[f(X_t),t<\kappa_\Gamma],s<\zeta)\\
&= \lim_{\epsilon\to 0}\lim_{s\to 0}\mathbf{N}^{\Gamma}(\mathbb{E}_{X_s}[f(X_t),t<\kappa_\Gamma],|X_s|<\epsilon,s<\zeta)\\
&\qquad +\lim_{\epsilon\to 0}\lim_{s\to 0}\mathbf{N}^{\Gamma}(\mathbb{E}_{X_s}[f(X_t),t<\kappa_\Gamma],|X_s|>\epsilon,s<\zeta)\\
&= \lim_{\epsilon\to 0}\lim_{s\to 0}\mathbf{N}^{\Gamma}\left(M(X_s)\frac{\mathbb{E}_{X_s}[f(X_t),t<\kappa_\Gamma]}{M(X_s)},|X_s|<\epsilon,s<\zeta\right)\\
&\qquad +\int_{y\in \Gamma, |y|>0} \mathbf{N}^{\Gamma}(X_{0+}\in{\rm d}y)\mathbb{E}_{y}[f(X_t),t<\kappa_\Gamma];\end{align*}
where in the final equality we used the continuity of the mapping $y\mapsto\mathbb{E}_{y}[f(X_t),t<\kappa_\Gamma],$ $y\in\Gamma.$
Corollary 3.2 and Theorem 3.3 in   \cite{BPW} implies that the limit 
$\lim_{|x|\to 0} \mathbb{E}_x[f(X_t),t<\kappa_\Gamma]/M(x)$ exists.
This implies from   the right hand side above that
\begin{align*}\mathbf{N}^{\Gamma}(f(X_t), t<\zeta)&= a\lim_{|x|\to 0}\frac{\mathbb{E}_{x}[f(X_t),t<\kappa_\Gamma]}{M(x)}+\int_{y\in \Gamma, |y|>0} \mathbf{N}^{\Gamma}(X_{0+}\in\d y)\mathbb{E}_{y}[f(X_t),t<\kappa_\Gamma],
\end{align*}
where
\begin{align*}& a = \lim_{\epsilon\to 0}\lim_{s\to 0}\mathbf{N}^{\Gamma}(M(X_s), |X_s|<\epsilon, s<\zeta)\\
&= \lim_{\epsilon\to 0}\lim_{s\to 0}\mathbf{N}^{\Gamma}\left(\frac{M(X_s)}{\mathbb{P}_{X_s}(\kappa_\Gamma>1)}\mathbb{P}_{X_s}(\kappa_\Gamma>1),|X_s|<\epsilon,s<\zeta\right)\\
&=\left(\lim_{|x|\to 0}\frac{M(x)}{\mathbb{P}_{x}(\kappa_\Gamma>1)}\right)\lim_{\epsilon\to 0}\lim_{s\to 0}\mathbf{N}^{\Gamma}(|X_s|<\epsilon,1+s<\zeta) <\infty.
\end{align*}
This finishes the proof of the identity (\ref{EL-decomp}). We will next prove the identity (\ref{rep.X0}). The latter decomposition together with the convergence (\ref{P0}), applied to $f(x)=M(x)g(\arg(x))\mathbf{1}_{(|x|\in (0,1))}$ with $g$ any continuous and bounded function on $\Gamma,$ implies that 
\begin{equation*}
\begin{split}
&\mathbf{N}^{\Gamma}\left(M(X_t)g(\arg(X_t))\mathbf{1}_{(|X_t|\in (0,1)}, t<\zeta\right)\\
&=a C \int_{|y|<1}g(\arg(y))n_t(y)\d y\\
&\hspace{2cm}+\int_{y\in \Gamma, |y|>0} \mathbf{N}^{\Gamma}(X_{0+}\in\d y)\mathbb{E}_{y}[M(X_t)g(\arg(X_t))\mathbf{1}_{(|X_t|\in(0,1))}, t<\kappa_\Gamma]
\end{split}
\end{equation*}
By the scaling property (iv') applied to $f(x)=M(x)g(\arg(x))\mathbf{1}_{( |x|\in (0,1) )}.$ 
\begin{equation*}
\begin{split}
\mathbf{N}^{\Gamma}(M(X_{0+})g(\arg(X_{0+})), |X_{0+}|\in (0,1)))&=c^{\alpha \gamma} \mathbf{N}^{\Gamma}(M(c^{-1}X_{0+})g(\arg(X_{0+})), |X_{0+}|\in (0,c))\\
&=c^{\alpha \gamma-\beta} \mathbf{N}^{\Gamma}(M(X_{0+})g(\arg(X_{0+})), |X_{0+}|\in (0,c)).\end{split}\end{equation*}
Notice that this is always finite because for $|x|<1$, $M(x)< K\mathbb{P}_x(\kappa_\Gamma>1)$, for some $K>0$. So, by the Markov property, the latter is bounded by $c^{\alpha\gamma-\beta}K||g|| \mathbf{N}^{\Gamma}(\zeta>1)$. Differentiating in $c>0,$ one gets 
\begin{equation*}
\begin{split}
(\beta-\alpha \gamma)r^{\beta-\alpha\gamma-1}{\rm d} r\, &\mathbf{N}^{\Gamma}(M(X_{0+})g(\arg(X_{0+})) ,|X_{0+}|\in (0,1))\\
&= \mathbf{N}^{\Gamma}(M(X_{0+})g(\arg(X_{0+})), |X_{0+}|\in {\rm d} r).
\end{split}
\end{equation*}
Observe that since the right hand side is positive as soon as $g$ is positive, we get as a side consequence that $\beta-\alpha\gamma\geq 0.$ Using again that $M(x)=|x|^{\beta}M(\arg(x)),$ one gets
\begin{equation*}
\begin{split}
(\beta-\alpha \gamma)r^{-\alpha\gamma-1}{\rm d} r\, &\mathbf{N}^{\Gamma}(|X_{0+}|^{\beta}M(\arg(X_{0+}))g(\arg(X_{0+})) ,|X_{0+}|\in (0,1))\\
&= \mathbf{N}^{\Gamma}(M(\arg(X_{0+}))g(\arg(X_{0+})), |X_{0+}|\in {\rm d} r).
\end{split}
\end{equation*}
Since this identity holds for any $g$ continuous and bounded, we derive that, when $\beta>\alpha\gamma,$ the equality of measures
$$\mathbf{N}^{\Gamma}(|X_{0+}|\in {\rm d} r,\ \arg(X_{0+})\in {\rm d} \theta))=\frac{{\rm d} r}{r^{\alpha \gamma+1}} \pi^{\Gamma}({\rm d}\theta),$$ holds, where,
$$ \pi^{\Gamma}({\rm d}\theta)=\frac{\mathbf{N}^{\Gamma}(|X_{0+}|^{\beta}, |X_{0+}|\in (0,1), \arg(X_{0+})\in  {\rm d} \theta))}{\beta-\alpha \gamma}. $$
Whilst if $\beta=\alpha\gamma,$ $\mathbf{N}^{\Gamma}(|X_{0+}|>0)\equiv 0,$ and thus $\pi^{\Gamma}\equiv 0.$ We are just left to prove that when $\beta>\alpha\gamma,$ then $\pi^{\Gamma}M<\infty$ and $a\equiv 0.$ Indeed, that $\pi^{\Gamma}M<\infty$ follows from the following estimates 
\begin{align*}
\infty&>n(1-{\rm e}^{\zeta},X_{0+}\neq 0)\\
&=\int_0^{\infty} {\rm d} s \, n(s<\zeta,X_{0+}\neq 0){\rm e}^{-s}\\
&=\int_{0}^\infty{\rm d} s \,{\rm e}^{-s}\int_{\Omega}\pi^{\Gamma}({\rm d}\theta) \int_0^\infty\frac{{\rm d} r}{r^{1+\alpha \gamma}}\mathbb{P}_{r\theta}(\kappa_\Gamma>s)  \\
&\geq \int_{\Omega}\pi^{\Gamma}({\rm d}\theta) \int_0^1\frac{{\rm d} r}{r^{1+\alpha \gamma}}\int_{r^{\alpha}}^\infty {\rm d} s {\rm e}^{-s}M(r\theta)s^{-\beta/\alpha}\\
&\geq\int_{\Omega}\pi^{\Gamma}({\rm d}\theta)M(\theta) \int_0^1\frac{{\rm d} r}{r^{1+\alpha \gamma-\beta}}\int_{1}^\infty{\rm d} s {\rm e}^{-s}s^{-\beta/\alpha},
\end{align*}
where we used the estimate in Proposition \ref{BPWprop}. As claimed we derive that $\int_{\Omega}\pi^{\Gamma}({\rm d} \theta)M(\theta)<\infty$. To finish, we observe that the identity \eqref{EL-decomp} together with the scaling property (iv') implies that for any $t>0$
\begin{equation*}
\begin{split}
t^{-\gamma}\mathbf{N}^{\Gamma}(\zeta>1)=\mathbf{N}^{\Gamma}(\zeta>t)\geq a \lim_{|x|\to 0}\frac{\mathbb{P}_{x}(t<\kappa_\Gamma)}{M(x)}=a t^{-\beta/\alpha}C,
\end{split}
\end{equation*}
with $C>0$ the constant appearing in Proposition \ref{BPWprop}. Since by assumption $\beta>\alpha\gamma,$ we obtain by making $t\downarrow 0,$ that $a\equiv 0.$ We have thus finished the proof of Lemma~\ref{lemmadecomposition}.
\end{proof}
To finish the proof of Theorem~\ref{recex} one should notice that, from the equation~\eqref{P0}, for every $f:\Gamma\to \mathbb{R}^+$ continuous with compact support, one has the convergence 
\begin{equation*}
\begin{split}
\lim_{|x|\to 0} \frac{\mathbb{E}_{x}[f(X_t),t<\kappa_\Gamma]}{M(x)}=C\int_{\Gamma}f(y)n_t(y)\d y, \qquad t>0.
\end{split}
\end{equation*}
This together with the decomposition in Lemma~\ref{lemmadecomposition} implies that we necessarily have the following representation for the entrance law of any self-similiar excursion measure $\mathbf{N}^{\Gamma}$. There is a measure $\pi^{\Gamma}$ on $\Omega,$ such that $\int_{\Omega}\pi^{\Gamma}(\d\theta)M(\theta)<\infty,$ and a constant $a\geq 0$ such that
\begin{equation}\label{EL-decomp2}
\begin{split}
\mathbf{N}^{\Gamma}(X_t\in\d y, t<\zeta) =a n_t(y)\d y+\int_0^\infty\frac{\d r}{r^{1+\alpha\gamma}}\int_{\Omega}\pi^{\Gamma}(\d\theta)\mathbb{E}_{r\theta}[X_t \in \d y,\ t<\kappa_\Gamma],
\end{split}
\end{equation}  
$a\pi^{\Gamma}\equiv 0,$ if $a>0$ then $\gamma=\beta/\alpha,$ and if $\pi^{\Gamma}\neq 0$ then $\gamma<\beta/\alpha,$ and $\pi^{\Gamma}M<\infty.$ 

%By the condition (iv')  and the scaling property of the entrance law $(n_t, t>0)$ in Theorem~\ref{entrance} it is an elementary exercise to verify that for any $c>0$ and any $f:\Gamma\to\mathbb{R}^+$
%\begin{equation*}
%\begin{split}
%\mathbf{N}^{\Gamma}(f(X_{t})\in\d y, t<\zeta)&=c^{\alpha\gamma}\mathbf{N}^{\Gamma}(f(c^{-1}X_{tc^{\alpha}})\in\d y, tc^{\alpha}<\zeta)\\ 
%&=ac^{\alpha\gamma} \int_{\Gamma}f(c^{-1}y)n_{tc^{\alpha}}(y)\d y\\
%&\quad+c^{\alpha\gamma}\int_0^\infty\frac{\d r}{r^{1+\alpha\gamma}}\int_{\Omega}\pi^{\Gamma}(\d\theta)\mathbb{E}_{r\theta}[f(c^{-1}X_{tc^{\alpha}}),\ tc^{\alpha}<\kappa_\Gamma]\\
%&=ac^{\alpha\gamma-\beta} \int_{\Gamma}f(y)n_{t}(y)\d y\\
%&\quad+\int_0^\infty\frac{\d r}{r^{1+\alpha\gamma}}\int_{\Omega}\pi^{\Gamma}(\d\theta)\mathbb{E}_{r\theta}[f(X_{t}),\ t<\kappa_\Gamma].
%\end{split}
%\end{equation*}
%This implies that if  $a>0$ then necessarily $\alpha\gamma=\beta,$ and thus $\pi^{\Gamma}\equiv 0.$ From the previous Lemma, it follows that $a>0$ if and only if $\alpha\gamma=\beta,$ in which case $\pi^{\Gamma}\equiv 0.$    

Furthermore, via cylinder sets, one can check that for $t>0$ and $A\in\mathcal{F}_t$ 
\begin{equation}\label{EL-decomp3}
\begin{split}
\mathbf{N}^{\Gamma}(A \,,\,  t<\zeta) =a\mathbb{E}^{\triangleleft}_0\left[\frac{1}{M(X_t)}\mathbf{1}_A\right] +\int_0^\infty\frac{\d r}{r^{1+\alpha\gamma}}\int_{\Omega}\pi^{\Gamma}(\d\theta)\mathbb{E}_{r\theta}[A \, ,\, t<\kappa_\Gamma].
\end{split}
\end{equation}  
 with $a$ and $\pi^{\Gamma}$ as above. As a side consequence, we have that the measure $\tilde{\mathbf{N}}^{\Gamma}$ on $\mathbb{D},$ defined by the relation 
\begin{equation}
\tilde{\mathbf{N}}^{\Gamma}(A \,,\, t<\zeta) := \mathbb{E}^{\triangleleft}_0\left[\frac{1}{M(X_t)}\mathbf{1}_A\right],\quad  \text {for\ }  A\in\mathcal{F}_t, \  t>0,  
%\label{textn}
\end{equation}
is a self-similar excursion measure, whose entrance law is $(n_t, t>0),$ and such that $\tilde{\mathbf{N}}^{\Gamma}(X_{0+}\neq 0)=0.$ Finally this is the unique self-similar excursion measure bearing this property, and hence the self-similar recurrent extension associated to it leaves zero continuously, and it is the unique self-similar recurrent extension having this property.

\subsection{Invariant measure}
We start by computing the invariant measure according to Chapter XIX.46 of Dellacherie and Meyer \cite{DM}. There it is shown that the invariant measure $\cenpi\!{}^\Gamma (\d y)$, $y\in \Gamma$, defined up to a multiplicative constant, is given by the excursion occupation measure so that
\[
\int_\Gamma f(y)\cenpi\!{}^\Gamma (\d y) = \mathbf{N}^{\Gamma}\left(\int_0^\zeta f(\chi_t)\d t\right),
\]
for all  bounded measurable $f$ on $\Gamma$. Note, however, the computations in Lemma \ref{excursionpotential} can be used to show that 
\[
\tilde{\mathbf{N}}^{\Gamma}\left(\int_0^\zeta f(\chi_t)\d t\right) = \int_0^\infty\int_\Gamma f(z) n_t(z)\d z\,\d t = \int_\Gamma \frac{f(z)}{M(z)}G^\triangleleft(0,\d z)
=\int_\Gamma f(z)H(z)\d z.
\]
It is then straight forward to prove the final identity in the statement of Theorem~\ref{recex}.
%Said another way, the invariant measure is absolutely continuous with respect to Lebesgue measure on $\Gamma$, so that 
%$\cenpi\!{}^\Gamma (\d y) =H(y)\d y$, $y\in\Gamma$.

\medskip

Finally, to see that $\cenpi$ is not a finite measure, we can compute its total mass, after converting to generalised polar coordinates (see e.g. Blumenson \cite{blum}), by
\begin{equation}
\int_\Gamma H(x)\d x = C\int_\Omega\sigma_1(\d \theta)M(\theta)\int_0^\infty r^{\alpha-\beta -1}\d r = \infty,
\label{totalmass}
\end{equation}
where $C>0$ is an unimportant constant attached to the Jacobian in the change of variables to generalised polar coordinates, and $\sigma_1(\d\theta)$ is the surface measure on $\mathbb{S}^{d-1}$ normalised to have unit total mass. Moreover, we also have that if $\pi^{\Gamma}$ is not trivial then
$$\int_0^\infty\frac{\d r}{r^{1+\alpha\gamma}}\int_{\Omega}\pi^{\Gamma}(\d\theta)\mathbb{E}_{r\theta}\left[\kappa_\Gamma\right]=\infty,$$ because by Proposition~\ref{BPWprop}, $\mathbb{E}_{r\theta}\left[\kappa_\Gamma\right]=\infty,$ for any $r>0,$ and $\Omega.$

\smallskip

The failure of this measure to normalise to have unit mass means that a stationary distribution cannot exist, cf. Chapter XIX.46 of Dellacherie and Meyer \cite{DM} and hence $\cenX$ is a null-recurrent process. 
\hfill$\square$

\section{Proof of Theorem  \ref{recexctsat0}}\label{proofrecexctsat0} The proof of this result follows verbatim that of Theorem \ref{recex}, albeit for some of the estimates that are used. Indeed, to establish Theorem \ref{recex}, we used that for $(X,\mathbb{P}^{\Gamma})$ we have
\begin{description}
\item[(a)] for $t>|x|^{\alpha}$,
$$ \mathbb{P}_x(\kappa_\Gamma>t) \approx M(x) t^{-\beta/\alpha};$$
\item[(b)] for any $t>0$ and $f:\Gamma\to \mathbb{R}^+$ continuous and bounded
$$\lim_{|x|\to 0} \frac{\mathbb{E}_x(f(X_t),t<\kappa_\Gamma)}{M(x)} \text{ exists.}$$
\end{description}
These conditions are replaced by the following conditions on  $(X, \mathbb{P}^{\triangleright})$ 
\begin{description}
\item[(a')] for $t>|x|^{\alpha}$,
$$ \mathbb{P}^{\triangleright}_x(\kappa_\Gamma>t) \approx H(x) t^{(\alpha-d-2\beta)/\alpha},$$
\item[(b')] for any $t>0$ and $f:\Gamma\to \mathbb{R}^+$  continuous and bounded
$$\lim_{|x|\to 0} \frac{\mathbb{E}^{\triangleright}_x(f(X_t),t<\kappa_\Gamma)}{H(x)} \text{ exists.}$$
\end{description}
Moreover, in proving Theorem \ref{recexctsat0}, where one reads $\beta$ in the proof of Theorem \ref{recex}, one should use $\beta^{\prime}:=2\beta+d-\alpha.$ From here we have the restriction $0<\beta^{\prime}=d+2\beta-\alpha<\alpha$, which restricts $\beta$ to the interval $(\left({\alpha-d}\right)/2)\vee 0<\beta<(2\alpha-{d})/{2}.$    

\smallskip

Let us finish by noticing that the finiteness of $\mathbf{N}^\triangleright(1-{\rm e}^{-\zeta})$ is equivalent to  $d<2(\alpha-\beta)$. To this end, we can appeal to Lemma 4.3 of \cite{BB}  to see that, there exists a constant $C>0$, such that, for $x\in\Gamma$,
\begin{equation}
C^{-1} s^{(\alpha-2\beta-d)/\alpha}<\lim_{x\to 0} |x|^{\alpha-2\beta-d}\mathbb{P}^{\triangleright}_x (s<\kappa_{\{0\}}) <C s^{(\alpha-2\beta-d)/\alpha},
\label{2sidedbound}
\end{equation}
where $\kappa_{\{0\}} = \inf\{t>0 : |X_t| = 0\}$.
Recall from \eqref{COMH2} that, for $x\in\Gamma$, and $A\in\mathcal{F}_t$, $t\geq 0$, 
\begin{equation}
\mathbb{E}^\triangleleft_x\left[
\mathbf{1}_{A}\frac{|X_t|^{\alpha -2\beta-d}}{|x|^{\alpha - 2\beta-d}}%\frac{H(X_t)}{H(x)}
\right] = 
  \mathbb{E}_x\left[\frac{H(X_t)}{H(x)}\mathbf{1}_{(A\cap \{t<\kappa_\Gamma\}) }\right] =  \mathbb{P}_x^\triangleright (A,\, t<\kappa_{\{0\}}) .
  \label{leftotright}
\end{equation}

Hence, using \eqref{leftotright}, we have 
\begin{align*}
\mathbf{N}^\triangleright(1-{\rm e}^{-\zeta})&=\int_0^{\infty} {\rm e}^{-s}\mathbf{N}^{\triangleright}(\zeta > s){\rm d} s\\
&= \int_0^{\infty}{\rm d}s\, {\rm e}^{-s}\mathbb{E}^{\triangleleft}_0 [J(X_s)]\\
&= \int_0^{\infty}{\rm d}s\, {\rm e}^{-s} \lim_{x\to 0} |x|^{\alpha-2\beta-d}\mathbb{P}^{\triangleright}_x (s<\kappa_{\{0\}}),
\end{align*}
and, thanks to \eqref{2sidedbound}, the right hand side either converges or explodes depending on whether $d<2(\alpha-\beta)$. So, remember that by Campbell's theorem, the sum of the lengths $\sum_{s\leq t} \zeta_s$ is finite a.s. for any $t>0,$ if and only if $\mathbf{N}^{\triangleright}(1-{\rm e}^{-\zeta})<\infty,$ which is equivalent to $d<2(\alpha-\beta).$ This justifies our comment following the statement of Theorem~\ref{recexctsat0}.
%
%\smallskip
%
%Observing that, for bounded measurable $f$ on $\Gamma$, 
%\[
%\texttt{n}^\triangleright (f(\chi_t ) \,,\, t<\zeta) = \mathbf{N}^{\Gamma} (H(\chi_t)f(\chi_t ) \,,\, t<\zeta),
%\]
%we can also easily verify from the computation in \eqref{scalinggamma} that, for $q,c>0$,
%\begin{align*}
%\texttt{n}^\triangleright\left(\int_0^{\zeta} {\rm e}^{-qc^{\alpha}s} f(c\chi_s){\rm d} s\right)&=\mathbf{N}^{\Gamma}\left(\int_0^{\zeta} {\rm e}^{-qc^{\alpha}s} H(c\chi_s)f(c\chi_s){\rm d} s\right)\\
%&=c^{-(1-\gamma)\alpha}\mathbf{N}^{\Gamma}\left(\int_0^{\zeta} {\rm e}^{-qt} H(\chi_s)f(\chi_t) {\rm d}t\right)\\
%&=c^{-(1-\gamma)\alpha}\texttt{n}^\triangleright\left(\int_0^{\zeta} {\rm e}^{-qt} f(\chi_t) {\rm d}t\right),
%\end{align*}
%which ensures the required scaling limit.
%
%\smallskip
%
%Finally we note that, for the invariant measure $\cenpi\!{}^\triangleright $, 
%\begin{align*}
%\int_\Gamma f(y)\cenpi\!{}^\triangleright (\d y) &= \texttt{n}^\triangleright\left(\int_0^\zeta f(\chi_t)\d t\right) \\
%&= \mathbf{N}^{\Gamma}\left(\int_0^{\zeta} H(\chi_t)f(\chi_t) {\rm d}t\right)\\
%&=
%\int_\Gamma H(y)f(y)\cenpi\!{}^\triangleright (\d y) \\
%&=\int_\Gamma f(y)H(y)^2\,\d y\\
%&=\int_\Gamma f(y)|x|^{2({\alpha -d-\beta})}M(y)^2\,\d y,
%\end{align*}
%where we have used the invariant distribution in Theorem \ref{recex}.
%thus providing the invariant measure. It is a straightforward computation along the lines of \eqref{totalmass} to check that it does not have finite total mass.\hfill$\square$
\section*{Acknowledgements}
We are grateful to two anonymous referees for their remarks which led to an improvement in an earlier draft of this paper.

\bibliography{references}{}
\bibliographystyle{plain}

\end{document}